\numberwithin{equation}{section}
\setlist{itemsep=1pt, topsep=1pt, parsep=1pt}
\newcommand{\newaliastheorem}[3]{%
  \newaliascnt{#1}{#2}%
  \newtheorem{#1}[#1]{#3}%
  \aliascntresetthe{#1}%
}
\newcommand{\setcrefnames}[5]{%
  \crefname{#1}{#2}{#3}%
  \Crefname{#1}{#4}{#5}%
}
\theoremstyle{plain}
\newtheorem{theorem}{Theorem}[section]
\theoremstyle{definition}
\theoremstyle{remark}
\DeclarePairedDelimiter\abs{\lvert}{\rvert}
\DeclarePairedDelimiter\norm{\lVert}{\rVert}
\DeclarePairedDelimiter\set{\{}{\}}
\newcommand*{\indic}{\bm{1}}
\newcommand*{\ind}[1]{\bm{1}_{\set{#1}}}
\newcommand*{\Var}{\mathbb{V}\mathrm{ar}}
\newcommand*{\Cov}{\mathbb{C}\mathrm{ov}}
\newcommand*{\de}{\mathop{}\!\mathrm{d}}
\newcommand*{\dd}{\de}
\newcommand*{\disc}{\mathrm{disc}}
\newcommand*{\tf}{\mathtt{F}}
\newcommand*{\oneover}[1]{\frac{1}{#1}}
\newcommand*{\EE}{\mathbb{E}}
\newcommand*{\E}{\mathbf{E}}
\let\P\relax
\newcommand*{\P}{\mathbf{P}}
\newcommand*{\PP}{\mathbb{P}}
\newcommand*{\tEE}{\widetilde{\EE}}
\newcommand*{\tPP}{\widetilde{\PP}}
\newcommand*{\tVV}{\widetilde{\mathbb{V}}\mathrm{ar}}
\newcommand*{\tC}{\widetilde{\mathbb{C}}\mathrm{ov}\,}
\newcommand*{\N}{\mathbb{N}}
\newcommand*{\Z}{\mathbb{Z}}
\newcommand*{\R}{\mathbb{R}}
\newcommand{\e}{\mathrm{e}}
\newcommand{\w}{\omega}
\renewcommand*{\th}{\vartheta}
\newcommand*{\eps}{\varepsilon}
\renewcommand{\phi}{\varphi}
\newcommand*{\llb}{\llbracket}
\newcommand*{\rrb}{\rrbracket}
\newcommand{\cF}{\mathcal{F}}
\newcommand{\cH}{\mathcal{H}}
\newcommand{\cM}{\mathcal{M}}
\newcommand{\cV}{\mathcal{V}}
\newcommand{\cY}{\mathcal{Y}}
\newcommand{\tN}{\tilde{N}}
\newcommand*{\Tm}{T^{(m)}}
\newcommand*{\tauk}{\tau^{(m)}_k}
\newcommand*{\sumtwo}[2]{\sum_{\substack{#1\\#2}}}
\newcommand\pig[1]{\scalerel*[5pt]{\big#1}{%
\ensurestackMath{\addstackgap[1.5pt]{\big#1}}}}
\newcommand\pigl[1]{\mathopen{\pig{#1}}}
\newcommand\pigr[1]{\mathclose{\pig{#1}}}
\title[Strong Disorder for Stochastic Heat Flow and 2D Directed Polymers]{Strong Disorder for Stochastic Heat Flow and\\2D Directed Polymers}
\author{Quentin Berger}
\address{Université Sorbonne Paris Nord, Laboratoire d'Analyse, Géométrie et Applications, CNRS UMR 7539, 99 Av. J-B Clément, 93436 Villetaneuse, France and Institut Universitaire de France}
\email{quentin.berger@math.univ-paris13.fr}
\author{Francesco Caravenna}
\address{Dipartimento di Matematica e Applicazioni, Università degli Studi di Milano-Bicocca, via Cozzi 55, 20125 Milano, Italy}
\email{francesco.caravenna@unimib.it}
\author{Nicola Turchi}
\address{Dipartimento di Matematica e Applicazioni, Università degli Studi di Milano-Bicocca, via Cozzi 55, 20125 Milano, Italy}
\email{nicola.turchi@unimib.it}
\subjclass[2020]{Primary 82B44; Secondary 60K35, 82D60.}
\keywords{Change of Measure, Coarse-Graining, Directed Polymer in Random Environment, Disordered Systems, Size Bias, Stochastic Heat Equation, Stochastic Heat Flow}
\date{\today}
\begin{document}

\begin{abstract}
  The critical 2D Stochastic Heat Flow (SHF) is a universal measure-valued process that provides a notion of solution to the ill-defined 2D stochastic heat equation. 
  We investigate the SHF in the large-time and strong-disorder regimes, proving a sharp form of \emph{local extinction}: we identify the rate at which the distribution collapses to zero.
  We also identify the spatial scale governing the transition from vanishing mass to diverging mass, and from extinction to an averaged behavior.
  Corresponding results are established for the partition functions of 2D directed polymers, yielding precise free-energy estimates. 
  Our proof provides a unified framework of change of measure and coarse-graining arguments.

  These results offer new insights into the 2D stochastic heat equation regularized via space-time discretization: for any regime of supercritical disorder strength~$\beta$, including the case where $\beta > 0$ is kept fixed, the solution exhibits fluctuations on a superdiffusive scale.
\end{abstract}

\maketitle

\section{Introduction and main results on the Stochastic Heat Flow}

The critical 2D \emph{Stochastic Heat Flow (SHF)} with disorder strength $\th \in \R$ is a stochastic process $\mathscr{Z}^\th = (\mathscr{Z}_{s,t}^{\th}(\dd x, \dd y))_{0 \le s \le t < \infty}$ of random measures on $\R^2 \times \R^2$.
It was introduced in~\cite{CSZ23} as the universal scaling limit of 2D directed polymer partition functions, recalled below, under a critical rescaling of the disorder strength.
It also arises as the limit of solutions to the 2D stochastic heat equation with mollified noise; see~\cite{Tsai24}, which provides an axiomatic definition. 

The fact that dimension two is the \emph{critical} spatial dimension for the stochastic heat equation and directed polymers makes the SHF especially interesting:
it is one of the few examples of a \emph{non-Gaussian scaling limit at the critical dimension and at the critical point}.
A brief overview of the literature on the SHF is presented in \Cref{sec:literature}. 
We refer to the lecture notes~\cite{CSZ24} for an extended discussion, as well as additional background and connections to singular SPDEs.

\subsection{Overview of our contribution}

We focus on the one-time marginal of the SHF:
\begin{equation*}
  \mathscr{Z}_{t}^{\th}(\dd x)
  \coloneqq\mathscr{Z}_{0,t}^{\th}(\R^2, \dd x)
\end{equation*}
which is a random measure on $\R^2$.
We investigate both the \emph{strong-disorder} regime $\th \to \infty$ and the \emph{large-time} regime $t \to \infty$, where a phenomenon of \emph{local extinction} occurs, in the sense that $\mathscr{Z}_{t}^{\th}(K) \to 0$ for any compact set $K \subset \R^2$.

Our main results identify the \emph{decay rate of the SHF distribution} (see \Cref{th:mainSHF}) as well as the \emph{growth rate of the spatial scale} at which a transition in the mass of the SHF occurs, from a regime of vanishing mass to a regime of diverging mass (see \Cref{cor:mainSHF}).
Both rates are shown to be exponential in time~$t$ and doubly exponential in the disorder strength~$\th$.

In \Cref{sec:DP-SHE} we present corresponding results for 2D directed polymers, which are of independent interest because they are valid across \emph{all regimes of disorder strength} (see \Cref{thm:quantitative}). 
This allows us to derive \emph{refined free-energy estimates} (see \Cref{thm:freeenergy}) which improve on the best available bounds in the literature \cite{Lac10a,BL17}.

We also discuss in \Cref{sec:SHE} the implications of our results for the 2D stochastic heat equation regularized through space-time discretization. 
We allow the disorder strength~$\beta$ to vary arbitrarily in the supercritical regime, including the case where \emph{$\beta > 0$ is kept fixed} as the regularization is removed. 
We show that the solution exhibits a transition from local extinction to an averaged behavior at an explicit \emph{superdiffusive scale} (see \Cref{th:mainSHE}). 
This identifies the regime where non-trivial fluctuations of the solution can be observed.

\smallskip

The strategy of our proof builds on the by-now classical approach based on \emph{change of measure} and \emph{coarse-graining}, introduced in the seminal works \cite{DGLT09,GLT10,GLT10b} and subsequently applied in various contexts. 
A key difficulty in successfully implementing this strategy lies in the choice of a suitable \emph{proxy} for the random variable of interest, in our case the partition function of 2D directed polymers, which must be tractable enough while remaining sufficiently close to the original partition function.

The main novelty of our approach is a ``canonical'' recipe for constructing such a proxy from a \emph{coarse-grained chaos expansion}.
The main ideas are illustrated in \Cref{sec:strategy,sec:keyprop}, where we also develop \emph{change-of-scale arguments} of independent interest. 
We believe that our strategy is sufficiently robust and transparent to be broadly applicable, and we expect it to be useful in other contexts.

\subsection{A quick overview of the SHF literature}
\label{sec:literature}

Many features of the SHF have been investigated, in particular its moments. 
The second moment was first studied in~\cite{BC98} in the context of solutions to the 2D stochastic heat equation, exploiting a connection with the delta-Bose gas from~\cite{ABD95}; refined results, also in the setting of directed polymers, were later obtained in \cite{CSZ19-Dickman}. 
The third moment was obtained in \cite{CSZ19-3rd}, and all integer moments were later derived in \cite{GQT21}; see also \cite{Che24} for further connections with the delta-Bose gas.

The asymptotic analysis of moments is challenging, due to their intricate structure. 
Important progress has recently been made in \cite{GN25}, where a sharp lower bound on their growth rate was established through a novel connection between moments of the SHF and the Gaussian Free Field;
note that the moments grow at a doubly exponential rate, a feature that also appears in our results.
Let us also mention \cite{LZ24}, where small-scale asymptotics were derived, extending the approach developed by \cite{CZ23} in the sub-critical regime.

Concerning the properties of the SHF as a random measure, estimates on its singularity and regularity were obtained in \cite{CSZ25}. 
It was also proved in \cite{CSZ23-GMC} that the SHF is \emph{not a Gaussian Multiplicative Chaos (GMC)} on~$\R^2$ via comparison of moments.
Very recently, the SHF was shown in \cite{CT25} to enjoy a \emph{conditional GMC structure} on path space, which yields as corollaries the full support property (strict positivity), also obtained independently in \cite{N25pos}, and the local extinction of mass for strong disorder, discussed below.

Other features of the SHF include a Chapman--Kolmogorov property and the construction of associated polymer measures \cite{CM24}, continuity in time and the characterization already mentioned above \cite{Tsai24}. 
Let us also mention the black noise property \cite{GT25} and an enhanced noise sensitivity property for directed polymer partition functions \cite{CD25}, which both yield independence between SHF and white noise.
Recent progress on a martingale description of the SHF was obtained in \cite{N25mart,Chen25mart}. 

Most of these results concern the SHF at finite time horizon and at fixed disorder strength $\th \in \R$.
Some results are also available in the weak-disorder regime $\th \to -\infty$, such as Edwards-Wilkinson (Gaussian) fluctuations \cite[Theorem~1.2]{CCR25} and an asymptotic log-normality for small scales \cite[Theorem~1.2]{CSZ25}.
Corresponding results, and many others, hold for directed polymers and the stochastic heat equation in the sub-critical regime, for which we refer again to~\cite{CSZ24}.
Here, by contrast, we investigate the SHF in the large-time regime \(t\to\infty\) and in the strong-disorder regime \(\th\to+\infty\).

\subsection{Main results for the SHF}

The first moment $\EE[\mathscr{Z}_{t}^{\th}(\dd x)] = \dd x$ of the SHF is simply the Lebesgue measure on $\R^2$. 
In particular, using the functional notation
\begin{equation*}
  \mathscr{Z}_{t}^{\th}(\varphi) 
  \coloneqq \int_{\R^2} \varphi(x) \, \mathscr{Z}_{t}^{\th}(\dd x) \,,
\end{equation*}
we have $\EE[\mathscr{Z}_t^\th(\varphi)] = 1$ for any \(t>0\) and each probability density $\varphi$ on~$\R^2$ (we call $\varphi$ the \emph{initial condition}). 

It turns out that the second moment diverges for strong disorder: 
for any \(t>0\) and each probability density~$\varphi$
\begin{equation*}
  \lim_{\th\to\infty} \EE[\mathscr{Z}_t^\th(\varphi)^2] 
  = \infty \,.
\end{equation*}
In fact, higher moments diverge even faster. 
More precisely, size-biasing and Jensen's inequality yield (see \cite[Remark~1.14]{CSZ25}) that
\begin{equation*}
  \forall h > 2 \colon \qquad \frac{\EE[\mathscr{Z}_t^\th(\varphi)^h]}
  {\EE[\mathscr{Z}_t^\th(\varphi)^2]^{\frac{h}{2}}}\,
  \xrightarrow[\ \th\to\infty \ ]{}\, \infty \,,
\end{equation*}
which expresses a form of \emph{intermittency}. 
For each fixed $\th$, similar asymptotics hold as $t\to\infty$.

\subsubsection{Strong disorder and local extinction}

In view of the intermittent behavior described above, it is natural to expect that $\mathscr{Z}_t^\th(\varphi)$ vanishes for strong disorder or large time: 
\begin{equation} 
\label{eq:mainSHF0}
  \mathscr{Z}_t^\th(\varphi) 
  \xrightarrow[\quad]{} 0 \quad \text{in distribution as $t\to\infty$  or  $\th\to\infty$}\,.
\end{equation}
The large-time convergence was obtained in \cite{CSZ25}, while the strong disorder convergence was very recently proved in \cite{CT25}, as a consequence of a \emph{conditional GMC structure}.

We establish a quantitative version of this convergence, uniform throughout the strong-disorder and large-time regimes. 
We allow for varying initial conditions~$\varphi$ with possibly \emph{diverging support}, and we establish \emph{optimal bounds}, displaying an exponential decay rate in time~\(t\) and a doubly exponential decay rate in the disorder strength~$\th$.
Let us denote by $\cM_1(r)$ the set of probability densities with support in the ball of radius~$r$:
\begin{equation}
\label{eq:probab-density}
  \cM_1(r) 
  = \Bigl\{ \varphi \colon \R^2 \to [0,\infty)\Bigm| \varphi \text{ integrable}, \int_{\R^2} \varphi(x) \de x= 1\text{ and } \varphi(x) = 0 \text{ for } \abs{x} > r \Bigr\}.
\end{equation}
We can now state our first main result, which we prove in \Cref{sec:others}.

\begin{theorem}[Strong disorder and large time for the SHF]
\label{th:mainSHF}
  There exist universal constants $c_0, c_1, c_2 \in (0,\infty)$ such that, for any $t > 0$ and $\th\in\R$,
  \begin{equation}
  \label{eq:mainSHF}
    \frac{1}{c_1} \, \e^{ -c_1 \, t \, \e^{\th} } 
    \leq \sup_{\varphi \in \cM_1\bigl(\e^{c_0 \, t\, \e^{\th}} \sqrt{t  \, }\,\bigr)} \EE\bigl[ \mathscr{Z}_t^\th(\varphi) \wedge 1 \bigr]
    \leq \frac{1}{c_2}\, \e^{-c_2 \, t \, \e^{\th} } \,.
  \end{equation}
  The same bounds hold upon replacing $\EE\bigl[ \mathscr{Z}_t^\th(\varphi) \wedge 1 \bigr]$ by a fractional moment $\EE\bigl[ \mathscr{Z}_t^\th(\varphi)^{\gamma} \bigr]$ with $\gamma \in (0,1)$, for constants \(c_0,c_1,c_2\) depending on \(\gamma\).

  Correspondingly, we can bound the right tail probability of $\mathscr{Z}_t^\th(\varphi)$: 
  for any $\eps \in (0,1)$ there are constants $C_{1,\eps}, C_{2,\eps} \in (0,\infty)$ such that
  \begin{equation} 
  \label{eq:probab-shf}
    C_{1,\eps} \: \e^{ -c_1 \, t \, \e^{\th} } 
    \leq \sup_{\varphi \in \cM_1\bigl(\e^{c_0 \, t\, \e^{\th}} \sqrt{t  \, }\,\bigr)} \PP\bigl( \mathscr{Z}_t^\th(\varphi) \ge \eps \bigr) 
    \le C_{2,\eps} \: \e^{-c_2 \, t \, \e^{\th}} \,.
  \end{equation}
\end{theorem}

The core of \Cref{th:mainSHF} is the upper bound in \eqref{eq:mainSHF}, which we derive from a corresponding result for 2D directed polymers; see \Cref{thm:quantitative} below.
The upper bound in \eqref{eq:probab-shf} follows by Markov's inequality, since $\PP(Z \ge \eps) \le (\eps \wedge 1)^{-1} \, \EE[Z \wedge 1]$ for any random variable $Z \ge 0$, while the lower bounds in \eqref{eq:mainSHF} and \eqref{eq:probab-shf} are obtained via the second moment method.

\begin{remark}[Lower bounds and second moment]
  We prove the lower bounds in \eqref{eq:mainSHF} and \eqref{eq:probab-shf} by the Paley--Zygmund inequality coupled with a variance upper bound; see \Cref{prop:second-moment-DP} below. 
  In both \eqref{eq:mainSHF} and \eqref{eq:probab-shf}, the $\sup$ can be removed if we choose $\varphi = \mathcal{U}_{\sqrt{t}}$ to be uniform on the ball of radius~$\sqrt{t}$, where we set
  \begin{equation} 
  \label{eq:unif-dist}
    \mathcal{U}_r(x) 
    \coloneqq \frac{1}{\pi r^2} \, \indic_{B(0,r)}(x) \qquad \text{with} \qquad B(0,r) \coloneqq \{x\in\R^2 \colon \abs{x} \le r\} \,.
  \end{equation}
\end{remark}

\begin{remark}[Truncated mean vs. fractional moments] 
  The \emph{truncated mean} $\EE[\mathscr{Z}_t^\th(\varphi) \wedge 1]$ appearing in \eqref{eq:mainSHF} may also be written as $\PP(\mathscr{Z}_t^\th(\varphi) > U)$ with $U$ an independent uniform random variable on $(0,1)$. 
  This quantity can be compared with \emph{fractional moments} $\EE[\mathscr{Z}_t^\th(\varphi)^\gamma]$ with $\gamma \in (0,1)$ (see \Cref{lem:frac-moment} below) and it is also directly related to the \emph{total variation distance} between the probability measure $\PP$ and its \emph{size-biased version} with respect to $\mathscr{Z}_t^\th(\varphi)$ (see \Cref{rem:TVdistance}).
  In our proofs, we will use both the truncated mean and fractional moments, since each quantity has its own advantages and limitations; see \Cref{sec:keyprop} for a discussion.
\end{remark}

\begin{remark}[Scaling covariance, strong disorder and large time]
  The dependence of our bounds \eqref{eq:mainSHF}, \eqref{eq:probab-shf} on the parameters $t$ and~$\th$ agrees with the \emph{scaling covariance property} of the SHF \cite[Theorem~1.2]{CSZ23}, which states that for any $t, \th, \varphi$ we have the equality in distribution
  \[
    \forall a > 0 \colon \qquad \mathscr{Z}_{a t}^\th\bigl(\varphi_{\sqrt{a}}\bigr)
    \,\overset{\mathrm{d}}{=}\, \mathscr{Z}_t^{\th + \log a}(\varphi)\qquad \text{where we set } \varphi_{\sqrt{a}}(x) 
    \coloneqq \tfrac{1}{a} \, \varphi\pigl(\tfrac{x}{\sqrt{a}}\pigr) \,.
  \]
  This property connects strong-disorder and large-time regimes: 
  replacing $t$ by $t/a$ and setting $a = \e^{-\th}$ (resp.\ $a=t$) allows us to set $\th=0$ (resp.\  $t=1$), which yields
  \begin{equation}
  \label{eq:scaling-th-t}
    \mathscr{Z}_{t}^{\th}\bigl(\varphi_{\sqrt{\e^{-\th}}}\bigr)
    \,\overset{\mathrm{d}}{=}\,\mathscr{Z}_{t \, \e^\th}^{0}(\varphi) \,,\qquad\mathscr{Z}_{t}^{\th}(\varphi_{\sqrt{t}})
    \,\overset{\mathrm{d}}{=}\,
    \mathscr{Z}_{1}^{\th + \log t}(\varphi) \,.
  \end{equation}
  Since it was proved in \cite[Theorem~1.4]{CSZ25} that $\mathscr{Z}_{T}^{\th_0}(\varphi) \to 0$ in probability as $T \to \infty$ for fixed~$\th_0$, we could deduce from the first relation in \eqref{eq:scaling-th-t} that $\mathscr{Z}_{t}^{\th}\bigl(\varphi_{\sqrt{\e^{-\th}}}\bigr) \to 0$ as $\th \to \infty$. 
  We stress, however, that this consequence is \emph{much weaker} than \eqref{eq:mainSHF0}, and a fortiori much weaker than \eqref{eq:mainSHF}, \eqref{eq:probab-shf} and \eqref{eq:SHF-largeball}, because shrinking the support of the initial condition $\varphi$ helps convergence to zero\footnote{For instance, by  \cite[Theorem~1.1]{CSZ25}, we have $\mathscr{Z}_{t}^{\th}(\varphi_{\sqrt{a}}) \to 0$ in probability as $a\downarrow 0$ even \emph{for fixed~$t,\th$}.}.

  Similarly, from property \eqref{eq:mainSHF0} and the second relation in \eqref{eq:scaling-th-t}, we could deduce that $\mathscr{Z}_{t}^{\th}(\varphi_{\sqrt{t}}) \to 0$ in probability as $t \to \infty$ for fixed~$\th$. 
  However, our bounds \eqref{eq:mainSHF}, \eqref{eq:probab-shf} are much stronger, since the space scale is increased by a factor $\e^{c \, t \, \e^\th}$.
\end{remark}

\subsubsection{Transition for the mass of the SHF on large spatial scales}

From \Cref{th:mainSHF} we deduce the behavior of the mass of the SHF in balls $B(0,r)$ with large radius $r \to \infty$. 
Even though $\EE[\mathscr{Z}_t^\th(B(0,r))] = \pi \, r^2 \to \infty$,  a \emph{transition occurs on the spatial scale $r = \e^{c \, t \, \e^\th} \sqrt{t}$} as either $\th\to\infty$ or $t\to\infty$: 
with high probability, the mass \emph{vanishes} for small $c > 0$, while it \emph{diverges} for large~$c$. 
Our second main result, proved in \Cref{sec:others}, is the following:

\begin{theorem}[Transition for the SHF mass in large balls]
\label{cor:mainSHF}
  There are constants $\delta > 0$ and $0 < c' < c'' < \infty$ such that the following holds for any $t > 0$ and $\th\in\R$:
  \begin{equation} 
  \label{eq:SHF-largeball}
    \text{with probability at least $\,1 - \tfrac{1}{\delta} \, \e^{-\delta \, t \, \e^{\th}}$:} \qquad
    \begin{cases}
      \mathscr{Z}_t^\th\pigl(B\pigl(0,\e^{c' \, t  \, \e^{\th}} \sqrt{t \,} \, \pigr)\pigr)
      \le t \, \e^{-\delta \, t \, \e^{\th}} \,, \\
      \rule{0pt}{1.4em}
      \mathscr{Z}_t^\th\pigl(B\pigl(0,\e^{c'' \, t \, \e^{\th}} \sqrt{t \,} \, \pigr)\pigr)
      \ge t \, \e^{+\delta \, t \, \e^{\th}}  .
    \end{cases}
  \end{equation}
\end{theorem}
We prove the first line of \eqref{eq:SHF-largeball} by exploiting the upper bound in \eqref{eq:mainSHF}, while for the second line we use the second moment method with a variance bound from \Cref{prop:second-moment-DP}.
We expect this transition to be sharp, in the following sense.
\begin{conjecture}
\label{conj:SHF-mass}
  There exists some \(\tilde c >0\) such that we have the following convergence in distribution: as \(t\to\infty\) or \(\th\to\infty\),
  \begin{equation*}
    \mathscr{Z}_t^\th\pigl(B\pigl(0,\e^{c \, t \, \e^{\th}} \sqrt{t \,} \, \pigr)\pigr)
    \xrightarrow{\ \ \mathrm{d}\ \ }
    \begin{cases}
      0  & \text{ if } c <\tilde c \,,\\
      +\infty  & \text{ if } c >\tilde c \,.
    \end{cases}
  \end{equation*}
\end{conjecture}

Since \(\EE[\mathscr{Z}_t^\th(\dd x)] = \dd x\), it is natural to compare the mass of the SHF on a large spatial scale with the mass of the Lebesgue measure, thereby capturing the ``escape of mass to infinity'' of the SHF $\mathscr{Z}_t^\th(\dd x)$.
\Cref{cor:mainSHF} already shows that on scales \(\e^{c \, t \, \e^{\th}} \sqrt{t}\) with $c < c'$ the SHF mass vanishes.
On the other hand, by spatial ergodicity, we expect some averaging to occur on larger scales.
Let us thus define a spatially rescaled version of the SHF \(\mathscr{Z}_t^\th(\dd x)\):
\begin{equation}
\label{eq:SHF-rescaled}
  \hat{\mathscr{Z}}_t^{\th,c}(\dd x)
  \coloneqq\frac{\mathscr{Z}_t^\th\pigl( \dd \bigl( \e^{c \, t \, \e^\th} \sqrt{t} \, x \bigr) \pigr)}{\bigl(\e^{c \, t \, \e^\th} \sqrt{t}\,\bigr)^{2}} \qquad\text{for } c \in (0,\infty) \,,
\end{equation}
where the normalization ensures $\EE[\hat{\mathscr{Z}}_t^{\th,c}(\dd x)] = \dd x$. 
We then prove the following transition from extinction to averaged behavior;
the proof is given in \Cref{sec:others}.

\begin{theorem}[Supercritical rescaling of SHF]
\label{cor2:SHF}
  There are constants \(0<c'<c''<\infty\) (possibly different from those in \Cref{cor:mainSHF}) such that, for any fixed density $\varphi$ on \(\R^2\), we have as \(t\to+\infty\) or \(\th\to+\infty\), 
  \begin{equation}
  \label{eq:SHF-fixed}
    \hat{\mathscr{Z}}_t^{\th,c}(\varphi) 
    = \int_{\R^2} \varphi(x) \, \hat{\mathscr{Z}}_t^{\th,c}(\dd x) \,
    \xrightarrow{\ \ \mathrm{d}\ \ } \,
    \begin{cases}
        0 & \text{if } c < c' \,, \\[2pt]
        1 & \text{if } c > c'' \,.
    \end{cases}
  \end{equation}
\end{theorem}
As in \Cref{conj:SHF-mass}, we expect that this transition is sharp.

\begin{conjecture} 
\label{conj:SHF-fixed-point}
  There exists a critical constant \(\hat c \in (0,\infty)\)  (possibly different from \(\tilde{c}\) in \Cref{conj:SHF-mass}) such that the convergence in distribution \eqref{eq:SHF-fixed} still holds with $c',c''$ both replaced by $\hat c$.
\end{conjecture}

\begin{remark}
  Assuming that the conjecture is true, it would be interesting to investigate what happens at the critical value \(c=\hat c\), namely whether a non-deterministic limit can be extracted from the sequence \(\hat{\mathscr{Z}}_t^{\th,\hat c}(\varphi)\) as \(t\to\infty\) or \(\th\to\infty\).
\end{remark}

We may interpret \Cref{cor:mainSHF,cor2:SHF} as manifestations of \emph{intermittent} behavior of the SHF.
The mass of the SHF vanishes on scales \(\e^{c \, t \, \e^{\th}} \sqrt{t}\) with \(c<c'\), ensuring that there are typically no ``high peaks'' in the distribution at this scale; however, since \(\EE[\mathscr{Z}_t^\th(\dd x)] =\dd x\), high peaks may occur, but only with very small probability.
On the other hand, at larger spatial scales \(\e^{c \, t \, \e^{\th}} \sqrt{t}\) with \(c>c''\) an averaging behavior occurs, meaning that we have encountered sufficiently many of the (rare but high) peaks.
\Cref{cor:mainSHF,cor2:SHF} thus give information on the spatial scale at which the high peaks appear.

\subsection{Structure of the paper}
In \Cref{sec:DP-SHE} we present our main results for 2D directed polymers and the stochastic heat equation, see in particular \Cref{thm:quantitative,th:mainSHE}.

In \Cref{sec:strategy} we describe the proof of \Cref{thm:quantitative}: by coarse-graining and change-of-scale arguments, we reduce it to the key \Cref{prop:key}, which is the core of the paper.

\Cref{sec:keyprop} contains the main ideas in the proof of the key \Cref{prop:key}, which involve size-biasing and change-of-measure arguments. 
This leads to some explicit moment estimates, which are proved in \Cref{sec:technicalI,sec:technicalII}.

\Cref{sec:others} collects the proofs of the other main results: 
\Cref{th:mainSHF,cor:mainSHF,cor2:SHF} on the SHF, \Cref{th:mainSHE} on the stochastic heat equation and \Cref{thm:freeenergy} on directed polymers.

Further technical results (which follow well-established paths) are postponed to the appendices.

\subsection{Notation}

\(\N\) denotes the set of non-negative integers. 
For a point \(x=(x_1,x_2)\) in \(\R^2\) we let \(\abs{x}=\sqrt{x_1^2+x_2^2}\) and $\abs{x}_\infty = \max\{\abs{x_1},\abs{x_2}\}$ denote its Euclidean and $\ell^\infty$ norms. 
For technical convenience, in the proofs we will slightly modify the families $\mathcal{M}_1(r)$ and $\mathcal{M}_1^{\mathrm{disc}}(r)$, see \eqref{eq:probab-density} and \eqref{eq:disc-density}, replacing $\abs{\,\cdot\,}$ with $\abs{\,\cdot\,}_\infty$ in their definition; this only affects constants.

Given two positive sequences \((a_N)_{N\in\N}\) and \((b_N)_{N\in\N}\), we write \(a_N \sim b_N\) if \(\lim_{N\to\infty} a_N/b_N = 1\) and \(a_N\ll b_N\) if \(\lim_{N\to\infty} a_N/b_N = 0\), \(a_N\gg b_N\) if \(\lim_{N\to\infty} a_N/b_N = +\infty\). 
For \(M,N\in \N\) with \(M\le N\) we write \(\llb M, N \rrb\) for the set \(\{M, M+1,\ldots,N\}\).
When \(A\) is a set, we denote by \(\abs{A}\) its cardinality and by \(\indic_A\) its indicator function, meaning that \(\indic_A(x) = 1\) if \(x\in A\) and \(\indic_A(x) = 0\) otherwise.
We write \(u\wedge v\) for \(\min(u,v)\) and \(u\vee v\) for \(\max(u,v)\).

\subsection*{Acknowledgements}
We are grateful to Rongfeng Sun and Nikos Zygouras for their comments on a preliminary version of the manuscript, which helped us improve the presentation. 
We would also like to thank Giuseppe Cannizzaro and Martin Hairer for insightful discussions.

F.C. and N.T. acknowledge the support of INdAM/GNAMPA.
Q.B. acknowledges the support of Institut Universitaire de France and ANR Local (ANR-22-CE40-0012-02).

\section{Main results for 2D directed polymers and stochastic heat equation}
\label{sec:DP-SHE}
The SHF was obtained in~\cite{CSZ23} as the limit of diffusively rescaled 2D directed polymer partition-function measures in an appropriate \textit{critical window} of disorder strength. 
We derive \Cref{th:mainSHF} from a corresponding result for the 2D directed polymer model, which in fact holds throughout the \emph{supercritical regime}.
We then discuss some consequences for the 2D stochastic heat equation regularized by space-time discretization, again in the supercritical regime.

\subsection{Strong disorder for 2D directed polymers}
We begin by recalling the definition of the directed polymer model.
Let \(S=(S_n)_{n\in\N}\) be the simple (symmetric, nearest-neighbor) random walk on \(\Z^2\), and denote~\(\P_x\) its law when $S_0 =x \in \Z^2$; let \(\E_x\) denote the corresponding expectation.
We simply write \(\P,\E\) for \(\P_0,\E_0\).
Additionally, consider a collection \(\w=\bigl(\w(n,x)\bigr)_{n\in\N,\,x\in\Z^2}\) of i.i.d.\ random variables, independent of \(S\), with law denoted by \(\mathbb{P}\).
By a slight abuse of notation, we also write \(\w\) for a generic copy of the disorder variables \(\w_{n,x}\).
We assume that
\begin{equation}
\label{eq:disorder}
  \EE[\w]=0\,,\qquad\EE[\w^2]=1\,,\qquad
  \lambda(\beta)
  \coloneqq\log\EE[\e^{\beta \w}] <+\infty\text{ for all }-2<\beta<2\,.
    \footnote{The endpoint \(2\) is chosen so that \(\sigma^2(\beta)\) is well-defined for all \(\beta\in(0,1)\), since it involves \(\lambda(2\beta)\). 
    For our purposes, one could equally well work on any smaller interval \((0,\beta_0)\) with \(\beta_0<1\), since the main interest is in the small-\(\beta\) regime.}
\end{equation}

For \(N\in\N\) and \(\beta>0\), the point-to-plane (\(1+2\text{-dimensional}\)) directed polymer model is defined as the Gibbs measure with Hamiltonian (up to a sign) 
\[
  H_N^{\beta,\w}(S) 
  \coloneqq \sum_{n=1}^N (\beta\w(n,S_n)-\lambda(\beta)).
\]
We are interested in the \emph{point-to-plane partition function} started from $x\in \Z^2$, defined by
\begin{equation}
\label{eq:polymeas}
  Z_{N}^{\beta,\w}(x)
  \coloneqq\E_x\Bigl[\e^{H_N^{\beta,\w}(S)}\Bigr]\, .
\end{equation}
We view \((Z_{N}^{\beta,\w}(x))_{x\in \Z^2}\) as a random field: for a function $f \in \ell^1(\Z^2)$ we define
\begin{equation}
\label{eq:Zf}
  Z_{N}^{\beta,\w}(f) 
  \coloneqq \sum_{x\in \mathbb{Z}^2} f(x)\, Z_{N}^{\beta,\w}(x) \,,
\end{equation} 
which is the integral of $f$ with respect to the random measure \( \sum_{x\in \Z^2} Z_{N}^{\beta,\w}(x)\, \delta_x\). 
The main result of~\cite{CSZ23} shows that this measure, diffusively rescaled, converges to a unique limit, which they named Critical 2D Stochastic Heat Flow (SHF), provided the disorder strength $\beta$ is rescaled in the so-called \textit{critical window}, which we now define.

Recalling \eqref{eq:disorder}, we define for $\beta \ge 0$ and $N\in\N$ the key quantities
\begin{equation}
\label{def:sigmaRN}
  \sigma^2(\beta) 
  \coloneqq \Var\bigl[\e^{\beta \w -\lambda(\beta)} \bigr] 
  = \e^{\lambda(2\beta) -2\lambda(\beta)} -1 \,,\qquad R_N 
  \coloneqq \sum_{n=1}^{N} \P(S_{2n}=0) \,.
\end{equation}
Note that \(\sigma^2(\beta) \sim \beta^2\) as \(\beta\downarrow 0\) and we can write, see \cite[Proposition~3.2]{CSZ19-Dickman},
\begin{equation}
\label{eq:RN}
  R_N 
  = \frac{1}{\pi} \bigl( \log N + \alpha_N \bigr) \qquad \text{with} \qquad
  \lim_{N\to\infty} \alpha_N \,
  =\,\alpha \,
  \coloneqq\, 4 \log 2 + \gamma - \pi\,\simeq\, 0.208 \,,
\end{equation}
where \(\gamma \coloneqq -\int_0^\infty \e^{-x} \, \log x \, \dd x \simeq 0.577\) is the Euler--Mascheroni constant.
Then the critical window corresponds to taking
\begin{equation}
\label{eq:critical-regime}
  \beta 
  = \beta_N(\th) \downarrow 0 \qquad\text{such that} \qquad \sigma^2(\beta) 
  = \frac{1}{R_N} \Bigl(1 + \frac{\th + o(1)}{\log N}\Bigr) \,,
\end{equation}
where \(\th \in \R\) is a fixed parameter, called the \textit{disorder strength} in the critical regime.

\smallskip

We now recall the main result of \cite{CSZ23}.
To match the random walk periodicity, we set
\begin{equation}
\label{eq:Zeven}
  \mathbb{Z}^d_{\mathrm{even}} 
  \coloneqq \{ x = (x^1,\ldots, x^d) \in \Z^d \colon \ x^1 + \ldots + x^d \text{ is even} \} \,.
\end{equation}
Given an integrable function $\varphi\colon\R^2 \to \R$, we define its rescaled version $\varphi^{(N)}\colon\mathbb{Z}^2_{\mathrm{even}} \to \R$
by
\begin{equation} 
\label{eq:varphiresc}
  \varphi^{(N)}(x) 
  \coloneqq \frac{N}{2} \int_{\abs{y - \frac{x}{\sqrt{N}}}_1 \le \frac{1}{\sqrt{N}}} \varphi(y) \, \dd y
\end{equation}
where $\abs{\,\cdot\,}_1$ denotes the $\ell^1$ norm. 
Then, for every \(t>0\), every \(\th \in \R\) and every integrable function $\varphi\colon\R^2 \to \R$, one has
\begin{equation}
\label{eq:convergence-polymer}
    Z_{\lfloor N t \rfloor}^{\beta_N,\w}(\varphi^{(N)}) 
    \xrightarrow[N\to\infty\,]{\mathrm{d}} \mathscr{Z}_{t}^{\th}(\varphi) \,,
\end{equation}
where \(\beta_N = \beta_N(\th)\) is chosen as in \eqref{eq:critical-regime}.

In this article, we go \emph{beyond the critical window}: we derive estimates which hold for \emph{arbitrary} $\beta \in (0,1)$ and $N \in \N$.
This means that $\th$ in \eqref{eq:critical-regime} need not be fixed, but may vary with~$N$ and~$\beta$. 
For this purpose, we refine the correspondence \eqref{eq:critical-regime} as follows:
\begin{equation} 
\label{eq:thetasharp}
  \sigma^2(\beta) 
  = \frac{1}{R_N} \, \Bigl( 1-\frac{\th + o(1)}{\pi R_N} \Bigr)^{-1}
  =  \frac{1}{R_N-\frac{\th + o(1)}{\pi}} \,.
\end{equation} 
Note that \eqref{eq:thetasharp} and \eqref{eq:critical-regime} are equivalent for any fixed \(\th \in \R\), in view of \eqref{eq:RN}.
More generally, they remain equivalent whenever \(\abs{\th} =o( \sqrt{\log N})\), but \emph{not} when \(\abs{\th}\gtrsim\sqrt{\log N}\).

\begin{remark}
  For $\beta \to 0$ and $N \to \infty$, we can rewrite \eqref{eq:thetasharp} more suggestively as
  \[
    \sigma^2(\beta) 
    = \frac{1}{R_{\lfloor N / \e^\th \rfloor} + o(1)}\,.
  \]
\end{remark}

Given any $\beta > 0$ and $N\in\N$, we therefore quantify the disorder strength by a parameter \(\th = \th(N,\beta) \) that we extract from \eqref{eq:thetasharp}: 
ignoring the $o(1)$ term, we define explicitly
\begin{equation}
\label{eq:Ntheta0}
  \th(N,\beta)
  \coloneqq \pi R_N - \frac{\pi}{\sigma^2(\beta)} \,.
\end{equation}
For $\beta = \beta_N(\th)$ in the critical window \eqref{eq:critical-regime} we have $\th(N,\beta) \to \th$ as $N\to\infty$, while any asymptotic regime \((N,\beta_N)\) above the critical window corresponds to $\th(N,\beta) \to \infty$.

We introduce a discrete analogue of~\eqref{eq:probab-density} for mass functions: 
\begin{equation} 
\label{eq:disc-density}
  \cM^{\disc}_1(r) 
  = \Bigl\{ f \colon \Z^2 \to [0,1] \Bigm| \sum_{z \in \Z^2} f(z) = 1 \text{ and } f(z) = 0 \ \text{ for } \abs{z} > r\Bigr\} \,.
\end{equation}
We can now state our main result for 2D directed polymers. 
The strategy of the proof is presented in \Cref{sec:strategy}, while the details are developed in the subsequent sections.

\begin{theorem}[Strong disorder for 2D directed polymers]
\label{thm:quantitative}
  There are constants \(c_0, c_1, c_2 \in (0,\infty)\) such that, uniformly over \(N\in\N\) and \(\beta \in (0,1)\), one has
  \begin{equation}
  \label{eq:upper-main}
    \frac{1}{c_1}\, \exp\Bigl(-c_1 \, {\e^{\th(N,\beta)}} \Bigr) 
    \leq \sup_{f \in \cM^{\disc}_1\bigl( \e^{c_0 \, \e^{\th(N,\beta)}} \sqrt{ N \, } \, \bigr)} \EE\bigl[ Z_{N}^{\beta,\w}(f) \wedge 1 \bigr] 
    \leq \frac{1}{c_2}\, \exp\Bigl(-c_2 \, {\e^{\th(N,\beta)}} \Bigr)  \,,
  \end{equation}
  where we define \( \th(N,\beta) \) as in~\eqref{eq:Ntheta0} (see also~\eqref{eq:thetasharp}). 
  Note that, in view of \eqref{eq:RN}, we have
  \begin{equation}
  \label{eq:etheta}
    \e^{\th(N,\beta)} \,
    =\,\e^{\alpha_N} N \, \e^{-\frac{\pi}{\sigma^2(\beta)}} \,
    =\, (1+ o(1))\, \e^{\alpha} N \, \e^{-\frac{\pi}{\sigma^2(\beta)}}\qquad \text{as } N\to\infty \,.
  \end{equation}
  In particular, if \((\beta_N)_{N\in\N}\subset(0,1)\) satisfies \(\th(N,\beta_N)\to \infty\), then for every choice of mass functions $f_N \in \cM^{\disc}_1\bigl( \e^{c_0 \, \e^{\th(N,\beta_N)}} \sqrt{ N \, } \, \bigr)$ we have convergence in probability $Z_N^{\beta_N,\w}(f_N) \to 0$.
\end{theorem}

We will deduce \Cref{th:mainSHF} for the SHF from \Cref{thm:quantitative} by taking $\beta = \beta_N(\th)$ in the critical regime \eqref{eq:critical-regime} so that $\th(N,\beta) \to \th$ (see Section~\ref{sec:others}). 
We stress, however, that \Cref{thm:quantitative} is \emph{stronger} since it allows for $\th(N,\beta) \to \infty$.
This also yields refined information on the \emph{free energy}, which we discuss in \Cref{sec:free-energy} below.

\subsection{On the supercritical 2D stochastic heat equation}
\label{sec:SHE}

The 2D Stochastic Heat Equation (SHE) is the singular stochastic PDE formally given for $t > 0$ and $x\in\R^2$ by
\begin{equation}
\label{eq:SHE}
  \begin{cases}
    \partial_t u(t,x) 
    = \tfrac{1}{2} \Delta u(t,x) + \beta \, \xi(t,x) \, u(t,x) \\[2pt]
    u(0,x) 
    = 1 \quad \text{(for simplicity)}
  \end{cases}
\end{equation}
where $\xi$ is space-time white noise and $\beta > 0$ tunes the disorder strength.
In mild formulation, this reads as
\begin{equation}
\label{eq:SHE-mild}
  u(t,x) 
  = 1 + \int_0^t \int_{\R^2} g_{t-s}(x-y) \, u(s,y) \,
  \beta \,  \xi(s,y)  \dd s \dd y 
\end{equation}
where \(g_t(x) = \frac{1}{2\pi t} \e^{- \frac{\abs{x}^2}{2t}}\) is the heat kernel.
Let us stress that this equation is ill-defined in dimension two and higher, since the solution $u$ is expected to be a genuine distribution, hence the singular product $u\cdot \xi$ has no clear meaning.

A natural way to remove the singularity is to suitably regularize the equation, so that a well-defined solution $u_1^\beta(t,x)$ exists (the subscript $1$ indicates the regularization scale). 
Many regularizations are possible including mollification in space, discretization in space or space-time and Fourier truncation.
We focus on space-time discretization, as in \cite{CSZ23}, turning the integrals in \eqref{eq:SHE-mild} into Riemann sums on the even lattice (see \eqref{eq:Zeven})
\begin{equation*}
  \mathbb{T}_1 
  \coloneqq (\N_0 \times \Z^2)_{\mathrm{even}} \,.
\end{equation*}
Replacing white noise by i.i.d. random variables and the heat kernel by the random walk transition kernel\footnote{
  More precisely, for $(t,x) \in \mathbb{T}_1$ we replace $\beta \, \xi(t,x)$ by $\e^{\beta\w(t,x) - \lambda(\beta)}-1$ and $g_t(x)$ by $q_t(x) \coloneqq \P(S_t = x)$.
}
, the solution $u_1^\beta(t,x)$ coincides with the \emph{partition function $Z_{[t]}^{\beta,\w_{[t]}}([x])$} from \eqref{eq:polymeas} with time-reversed environment~$\w_t$: 
upon piecewise constant extension, we have for $t \ge 0$, $x\in\R^2$,
\[
  u_1^\beta(t,x) 
  = Z_{[t]}^{\beta,\w_{{[t]}}}\bigl( [x] \bigr) \qquad \text{with} \quad \w_{m}
  = (\w_{m}(n,z) 
  \coloneqq \w(m-n, z))_{n\in\N, z\in\Z^2} \,,
\]
where we denote by $([t],[x])$ the closest point in $\mathbb{T}_1$ to $(t,x) \in [0,\infty) \times \R^2$.

\smallskip

We are interested in the \emph{large-scale scaling properties of $u_1^\beta(t,x)$}. 
In view of the parabolic nature of the SHE, it is natural to consider for $N\in\N$ the diffusively rescaled solution
\begin{equation}
\label{eq:SHE-rescaled}
  u_N^\beta(t,x) 
  \coloneqq u_1^\beta(N t, \sqrt{N} x) 
  = Z_{[Nt]}^{\beta,\w_{[Nt]}}\bigl( [\sqrt{N} x] \bigr) \,,
\end{equation}
so that the regime $N\to\infty$ corresponds to \emph{zooming out in space and time}. 
Also note that $u_N^\beta(t,x)$ is the solution of the SHE \eqref{eq:SHE-mild} discretized on the finer lattice 
\begin{equation*}
  \mathbb{T}_N 
  \coloneqq \bigl( \tfrac{1}{N}\N_0 \times \tfrac{1}{\sqrt{N}} \Z^2\bigr)_{\mathrm{even}}
\end{equation*}
which approximates space-time $[0,\infty) \times \R^2$ as $N\to\infty$. 
For this reason, finding a \emph{non-deterministic limit of $u_N^\beta(t,x)$ as $N \to \infty$} provides a notion of solution to the ill-defined 2D SHE. 
This is the viewpoint taken in \cite{CSZ23}, where it was shown that \emph{for $\beta = \beta_N(\th)$ in the critical window \eqref{eq:critical-regime}} the solution $u_N^\beta(t,x)$ converges to the SHF~$\mathscr{Z}_t^\th(\dd x)$, see \eqref{eq:convergence-polymer}.

\smallskip

A natural question is the behavior of $u_N^\beta(t,x)$ \emph{beyond the critical window}, that is for disorder strength $\beta = \beta_N$ such that $\th(N,\beta) \to \infty$, see \eqref{eq:Ntheta0}. 
We call this range of $(\beta,N)$ the \emph{supercritical regime}\footnote{The subcritical regime \emph{below the critical window} \eqref{eq:critical-regime}, corresponding to $\beta \ll \beta_N(\th)$, has been studied in depth in the literature, see e.g.~\cite{CSZ17b,CSZ20,CZ23,CZ24,CCR25,CD25,CNZ25}.}, which includes in particular the case when \emph{$\beta > 0$ is kept fixed as $N\to\infty$}. 
A direct consequence of \Cref{thm:quantitative} is a spatially averaged form of \emph{local extinction} of $u_N^\beta(t,x)$: 
for any $t > 0$ and any density $\varphi$, we have the convergence in distribution
\begin{equation}
\label{eq:SHE-extinction}
  \int_{\R^2} \varphi(x) \, u_N^\beta(t,x) \de x \,
  \xrightarrow[N\to\infty]{\mathrm{d}} \, 0
\end{equation}
in the supercritical regime. 
We now present quantitative refinements of this result.

Since rescaling space-time diffusively in \eqref{eq:SHE-rescaled} leads to the degenerate limit \eqref{eq:SHE-extinction}, we expect the solution $u_1^\beta(t,x)$ to display non-trivial fluctuations on a \emph{superdiffusive scale}. 
To capture this phenomenon, it is natural to modify the space rescaling in \eqref{eq:SHE-rescaled}, replacing $\sqrt{N}$ therein by $\sqrt{D_N  N}$ for a suitable diverging factor $D_N \to \infty$, with the aim of finding a non-trivial limit. 
This viewpoint was taken, for instance, in \cite{CMT25}, where the regularized 2D stochastic Burgers equation was shown to admit non-trivial fluctuations for the choice $D_N \propto (\log N)^{2/3}$. 

In our setting of the 2D SHE, it turns out that the space rescaling in \eqref{eq:SHE-rescaled} needs to be modified by an \emph{exponential factor}:
\begin{equation}
\label{eq:SHE-exp-rescaled}
  \begin{split}
    \hat{u}_{N}^{\beta,c}(t,x) 
    &\coloneqq u_1^\beta\pigl( N t \,, \, \rho_{Nt}^{\beta,c} \,\sqrt{ Nt\,}  \, x \pigr)\qquad \text{with} \quad \rho_{Nt}^{\beta,c} 
    \coloneqq \e^{c \, \e^{\th(\lfloor Nt\rfloor,\beta)}} , \quad c \in (0,\infty) \,.
  \end{split}
\end{equation}
In the supercritical regime $\th(N,\beta) \to \infty$ we have $\rho_{Nt}^{\beta,c} \to \infty$ for any $t>0$. 
Moreover, by \eqref{eq:thetasharp} we have 
\begin{equation}
\label{eq:thNt}
  \th(\lfloor Nt\rfloor,\beta)
  = \th(N,\beta) + \log t + o(1).
\end{equation}
Also note that, by \eqref{eq:etheta}, we can write
\begin{equation} 
\label{eq:exponential-scale}
  \rho_{Nt}^{\beta,c}
  = \e^{c \, \e^\alpha\, N t \, f_\beta (1+o(1))} \qquad \text{with} \quad f_\beta 
  \coloneqq \e^{-\frac{\pi}{\sigma^2(\beta)}} \,,
\end{equation}
which shows that, for fixed $\beta > 0$, the factor $\rho_{Nt}^{\beta,c}$ grows \emph{exponentially in $N$}  (see Remark~\ref{rem:stretched-exponential} for $\beta \downarrow 0$).

The following result, proved in \Cref{sec:others}, shows that the rescaled solution \(\hat{u}_{N}^{\beta,c}\) undergoes a transition from extinction to an averaged regime as \(c\) varies.

\begin{theorem}[Supercritical 2D stochastic heat equation]
\label{th:mainSHE}
  There are constants \(0 < c' < c'' < \infty\) such that the following holds: 
  if ($\beta_N)_{N\in\N} \subset (0,1)$ is in the supercritical regime \(\th(N,\beta_N) \to \infty\), then for any $t > 0$ and any fixed density $\varphi$ we have
  \begin{equation}
  \label{eq:mainSHE}
    \int_{\R^2} \varphi(x) \, \hat{u}_{N}^{\beta_N,c}(t,x) \, \dd x \,
    \xrightarrow[\ N\to\infty\ ]{\mathrm{d}} \,
    \begin{cases}
      0 & \text{if } c < c' \,, \\[2pt]
      1 & \text{if } c > c'' \,.
    \end{cases}
  \end{equation}
  This holds, in particular, if $\beta_N \equiv \beta \in (0,1)$ is kept fixed as $N\to\infty$. 
\end{theorem}

Let us discuss the significance of this result for the 2D SHE \eqref{eq:SHE}-\eqref{eq:SHE-mild} regularized by space-time discretization.
While the critical regime \eqref{eq:critical-regime} is the correct rescaling of $\beta$ to obtain the non-trivial SHF limit \cite{CSZ23}, it is natural to ask \emph{what happens beyond this critical window} (including the case when $\beta > 0$ is fixed).
\Cref{th:mainSHE} shows that non-trivial fluctuations of the solution can only be observed on the \emph{superdiffusive scale $\rho_{Nt}^{\beta,c}\sqrt{Nt}$ from \eqref{eq:SHE-exp-rescaled}}, for some $c \in (c',c'')$. 
We expect a similar phenomenon to occur for the SHE also in one spatial dimension, see Remark~\ref{rem:1D-SHE}.

Checking that the scale $\rho_{Nt}^{\beta,c}\sqrt{Nt}$ provides an \emph{upper bound} on the spatial fluctuation scale is not difficult: 
a variance computation yields the second line of \eqref{eq:mainSHE}, which shows that an averaged behavior takes place for suitable (large) $c>0$. 
However, in view of the intermittent nature of the solution, it is not at all clear whether the spatial scale identified by the variance is of the correct order. 
Our result shows that this is indeed the case, for suitable (small) $c>0$, thanks to a \emph{lower bound} on the scale of space fluctuations provided by the first line in \eqref{eq:mainSHE}.

\smallskip

Analogously to \Cref{conj:SHF-fixed-point}, we expect that the transition in \eqref{eq:mainSHE} is sharp.

\begin{conjecture} 
\label{conj:SHE-fixed-point}
  There exists a critical constant \(\hat c \in (0,\infty)\) such that the convergence in distribution \eqref{eq:mainSHE} still holds with both $c',c''$ replaced by $\hat c$. 
\end{conjecture}

We conclude with a few remarks.

\begin{remark}[Stretched exponential scale]
\label{rem:stretched-exponential}
  The rescaling factor $\rho_{N}^{\beta,c}$ in \eqref{eq:exponential-scale} is truly exponential in $N$ when $\beta > 0$ is kept fixed, while it is slower than exponential for vanishing~$\beta$. 
  A natural way to interpolate between the critical regime \eqref{eq:critical-regime} and the fixed $\beta > 0$ case is to consider
  \[
    \sigma^2(\beta) 
    = \frac{\hat\beta^2}{R_N} \qquad \text{for} \quad \hat\beta \in (1,\infty) \qquad \text{which implies} \qquad \beta 
    \sim \frac{\hat\beta}{\sqrt{R_N}}\,.
  \]
  This choice yields the \emph{stretched exponential scale}
  \begin{equation*}
    \rho_{N}^{\beta,c}
    = \e^{c \, (\e^\alpha N)^{1-\hat\beta^{-2}}}
  \end{equation*}
  which recovers the pure exponential scale as $\hat\beta \to \infty$.
\end{remark}

\begin{remark}[On the mollified 2D stochastic heat equation]
  We can also consider the 2D SHE regularized by mollification in space, whose solution is known to converge to the SHF in the critical window, see~\cite{Tsai24}.
  Adapting the techniques of the present paper, we can establish a version of \Cref{thm:quantitative,th:mainSHE} in this setting, which will be addressed in a forthcoming work.
\end{remark}

\begin{remark}[1D stochastic heat equation]
\label{rem:1D-SHE}
  In space dimension \(d=1\), the SHE \eqref{eq:SHE}-\eqref{eq:SHE-mild} has a well-defined solution $u^\beta(t,x)$ (with no need of regularization). 
  Defining, as in \eqref{eq:SHE-rescaled}, 
  \begin{equation*}
    u_N^\beta(t,x)
    \coloneqq u^{\beta}(Nt, \sqrt{N}x) \,,
  \end{equation*}
  one can check from \eqref{eq:SHE} that $u_N^\beta(t,x) \overset{d}{=} u^{\beta N^{1/4}}(t,x)$. 
  It follows that the \emph{critical regime} is $\beta \sim \hat\beta \, N^{-1/4}$, under which the law of $u_N^\beta$ remains invariant.
  
  In the \emph{supercritical regime} $\beta \gg \, N^{-1/4}$ we expect the same result as in \Cref{th:mainSHE} for an \emph{exponentially rescaled solution} $\hat{u}_{N}^{\beta,c}$, defined as in \eqref{eq:SHE-exp-rescaled}-\eqref{eq:exponential-scale} with $f_\beta$ replaced by~$\beta^4$:
  \[
    \hat{u}_{N}^{\beta,c}(t,x) 
    \coloneqq u^\beta\pigl( N t \,, \, \rho_{Nt}^{\beta,c} \sqrt{Nt\,}  \, x \pigr)\quad\text{with} \quad \rho_{Nt}^{\beta,c} 
    \coloneqq \e^{c \, Nt \, \beta^4}, \quad c \in (0,\infty) .
  \]
  We can give a heuristic explanation for the need of an exponential rescaling as follows. 
  Let us assume the expected KPZ one-point and process asymptotics of $\log u^\beta(t, x)$, which may be written as (we set $\beta = 1$ for simplicity): 
  for some constants $\text{\scshape F}, a, b > 0$ 
  \begin{equation} 
  \label{eq:Airy}
    u^{1}(t, x) \,
    \overset{d}{=}\, \exp\pigl(- \text{\scshape F}\, t \,+\, a \, t^{1/3} \mathcal{A}_1\bigl( \tfrac{x}{b \, t^{2/3}} \bigr) \,+\, o(t^{1/3}) \pigr) \qquad \text{as }t\to\infty \,,
  \end{equation}
  where $\mathcal{A}_1(\cdot)$ is the Airy process. 
  This process has an upper tail $\PP(\mathcal{A}_1(z) > u) \approx \exp(- c \, u^{3/2})$ and good spatial mixing properties, which yield slow spatially growing extrema: $\max_{\abs{z}\le R} \mathcal{A}_1(z) \approx (\log R)^{2/3}$. 
  This implies that we need \emph{exponentially large $\abs{x} \approx \exp(C t)$} for the second term $t^{1/3} \mathcal{A}_1\bigl( \tfrac{x}{b \, t^{2/3}} \bigr)$ in \eqref{eq:Airy} to overcome the leading term $- \text{\scshape F}\, t$, in order to prevent $u^{1}(t, x)$ from vanishing as $t\to\infty$.
\end{remark}

\subsection{Further results for directed polymers}
\label{sec:free-energy}

In the space dimension $d=2$, the point-to-plane partition function $Z_N^{\beta,\w}\coloneqq Z_N^{\beta,\w}(0)$ converges a.s. to \(0\) as \(N\to\infty\) \emph{for any fixed disorder strength $\beta > 0$}, and it does so exponentially fast, as shown in \cite{Lac10a}.
Its exponential decay rate to \(0\) is called (up to a sign) the free energy (or pressure) and is defined as 
\begin{equation}
\label{eq:def-free-energy}
  \tf(\beta) 
  \coloneqq \lim_{N\to\infty} \frac1N \log Z_{N}^{\beta,\w}  
  = \lim_{N\to\infty} \frac1N \, \EE \bigl[ \log Z_{N}^{\beta,\w} \bigr] \in (-\infty, 0]\,,
\end{equation}
where the limit is known to exist a.s. and in \(L^1(\PP)\), see e.g. \cite[Thm.~2.1]{Com17}. 
We point out that the free energy is related to localization properties of the polymer, see e.g.\ \cite{CH06,CSY03}. 

It was shown in~\cite{Lac10a} that \(\tf(\beta)<0\) for any \(\beta>0\) with some explicit bounds; 
a few years later, \cite{BL17} refined the bounds and showed that 
\[
  \tf(\beta) 
  =- \exp\Bigl( - (1+o(1)) \frac{\pi}{\beta^2} \Bigr) \quad  \text{ as } \beta\downarrow 0 \,.
\]
Our next result substantially improves these bounds: 
we identify the \emph{exact exponential decay rate} as \(\pi/\sigma^2(\beta)\), rather than simply\footnote{Note that $\lambda(\beta) = \frac{1}{2} \beta^2 + \frac{\kappa_3}{3!} \beta^3 + \frac{\kappa_4}{4!} \beta^4 + O(\beta^5)$ as $\beta \downarrow 0$, where $\kappa_3, \kappa_4$ are the third and fourth cumulants of the disorder distribution. It follows that we have \(\e^{-\pi/\beta^2} \sim (cst.)\, \e^{-\pi/\sigma^2(\beta)} \) only when $\kappa_3 = 0$.} \(\pi/\beta^2\), and we ``bring the $o(1)$ out of the exponential''.

\begin{theorem}[Improved free energy bounds]
\label{thm:freeenergy}
  There are constants $c,c' \in (0,\infty)$ such that
  \begin{equation}
  \label{eq:free-energy-bounds}
    \forall \beta \in (0,1) \colon \qquad - \frac{c'}{\sigma^2(\beta)^{4}} \, \exp\Bigl( - \frac{\pi}{\sigma^2(\beta)} \Bigr) 
    \leq  \tf(\beta) 
    \leq - c  \, \exp\Bigl( - \frac{\pi}{\sigma^2(\beta)} \Bigr) 
  \end{equation}  
  where we recall that \(\sigma^2(\beta) = \e^{\lambda(2\beta) -2\lambda(\beta)}-1\).
\end{theorem}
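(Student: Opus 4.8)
The plan is to obtain the two bounds separately, each by transferring the finite-$N$ polymer estimates of \Cref{thm:quantitative} to the limiting free energy via the relation $\tf(\beta) = \lim_N \tfrac1N \EE[\log Z_N^{\beta,\w}]$.

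\medskip

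\emph{Upper bound (the "hard" direction).} Fix $\beta \in (0,1)$ small. We want to show $\tf(\beta) \le -c\,\e^{-\pi/\sigma^2(\beta)}$, i.e.\ that $Z_N^{\beta,\w}$ decays at least as fast as $\e^{-c N \e^{-\pi/\sigma^2(\beta)}}$ on an exponential scale. The natural strategy is to pick the \emph{reference scale} $M = M(\beta) \in \N$ so that the effective disorder parameter $\th(M,\beta) = \pi R_M - \pi/\sigma^2(\beta)$ is a suitable large positive constant; by \eqref{eq:etheta} this means choosing $M \asymp \e^{\pi/\sigma^2(\beta)}$, up to a multiplicative constant, so that $\e^{\th(M,\beta)}$ is a fixed large number $K$. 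For this $M$, \Cref{thm:quantitative} (applied with $f = \ind{0}$, or rather with a uniform $f$ on the appropriate ball and then localised) gives $\EE[Z_M^{\beta,\w}(0) \wedge 1] \le \tfrac{1}{c_2}\e^{-c_2 K} =: p$, which can be made as small as we like by taking $K$ large. One then upgrades this to a statement about $\log Z$: by a standard concentration/Markov argument, $\PP(Z_M^{\beta,\w}(0) \ge \e^{-c_2 K/2}) \le \e^{-c_2 K/2}$, so with high probability $\tfrac1M \log Z_M^{\beta,\w}(0) \le -\tfrac{c_2 K}{2M} =: -\rho$. Finally one uses the \emph{super-multiplicative (Markov/restriction) structure} of point-to-plane partition functions: writing $N = kM$ and decomposing the walk into $k$ successive time-blocks of length $M$, $\log Z_{kM}^{\beta,\w}(0)$ is bounded above by a sum of $k$ (dependent but identically distributed, and each with a fresh piece of the environment) block contributions, plus boundary corrections from having to return to $0$; a law-of-large-numbers / ergodic argument (or the already-guaranteed existence of the limit in \eqref{eq:def-free-energy}) then yields $\tf(\beta) \le -\rho + (\text{b.c.}) \le -\tfrac{c_2 K}{4 M}$. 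Since $M \le C\,\e^{\pi/\sigma^2(\beta)}$, this is $\le -c\,\e^{-\pi/\sigma^2(\beta)}$, as desired. The main obstacle here is the clean handling of the block decomposition: the point-to-plane partition function is not exactly super-multiplicative, so one needs to insert return-to-origin (or return-to-a-box) factors and control that these cost only a bounded (or $N$-sublinear) amount; alternatively one can work with point-to-point partition functions, for which super-multiplicativity is exact, and pay a polynomial price to pass back. Either way this is routine but must be done carefully.

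\medskip

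\emph{Lower bound.} Here we want $\tf(\beta) \ge -\tfrac{c'}{\sigma^2(\beta)^4}\e^{-\pi/\sigma^2(\beta)}$, i.e.\ that $Z_N$ cannot decay too fast. The cheap route is a second-moment / $L^2$ argument: by Jensen, $\EE[\log Z_N^{\beta,\w}] \ge -\log \EE[1/Z_N^{\beta,\w}] $ is the wrong direction, so instead use the standard bound $\tf(\beta) \ge -\liminf_N \tfrac1N \log \tfrac{\EE[(Z_N^{\beta,\w})^2]}{\EE[Z_N^{\beta,\w}]^2}$-type estimate via Paley--Zygmund, or more simply the well-known fact (going back to \cite{Lac10a}, see also \cite{BL17}) that $\tf(\beta) \ge -\frac{\text{const}}{t} \cdot \frac{\EE[(Z_t^{\beta,\w})^2]-1}{t}$ optimised over an auxiliary time horizon $t$. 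Concretely: for any $t \in \N$, restricting the walk to stay near the origin up to time $t$ and using the (known, from \cite{CSZ19-Dickman}) behaviour of the second moment $\EE[(Z_t^{\beta,\w})^2] = \exp(\Theta(\sigma^2(\beta)^2 R_t)) \asymp t^{\,O(\sigma^2(\beta))}$ in the relevant range, one gets $\tf(\beta) \ge -C\,\frac{\log(\text{second moment at time } t^\star)}{t^\star}$ for the optimal choice $t^\star \asymp \e^{\pi/\sigma^2(\beta)}$, and tracking the polynomial corrections in $\sigma^2(\beta)$ coming from $\alpha_t$ and from the optimisation yields the stated power $\sigma^2(\beta)^{-4}$. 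This direction is essentially a sharpening of the computation in \cite{BL17}; the only real work is being careful with the polynomial prefactors so as to land on exponent $4$ (or whatever the true optimum is) rather than hiding them in an $o(1)$ inside the exponential.

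\medskip

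In both bounds the key conceptual input is \Cref{thm:quantitative}, which pins the \emph{crossover scale} at $N \asymp \e^{\pi/\sigma^2(\beta)}$; everything else is converting a one-scale estimate into a rate via sub/super-multiplicativity (upper bound) and a matching second-moment lower bound. I expect the block-decomposition step of the upper bound to be the principal technical nuisance, since it requires either switching to point-to-point partition functions or absorbing boundary corrections, whereas the lower bound is a controlled variant of existing arguments.
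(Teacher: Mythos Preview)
Your upper-bound strategy is overcomplicated in a way that creates the very technical nuisance you flag. You apply \Cref{thm:quantitative} only at a single reference scale $M$ and then propose a block decomposition to iterate to $N=kM$. But \Cref{thm:quantitative} already holds for \emph{every} $N$, and by \eqref{eq:etheta} we have $\e^{\th(N,\beta)} \sim \e^\alpha\, N\, \e^{-\pi/\sigma^2(\beta)}$, so the right-hand side of \eqref{eq:upper-main} is \emph{already} exponentially decaying in $N$---the coarse-graining you want to re-do is precisely what \Cref{prop:coarse} does inside the proof of \Cref{thm:quantitative}. The paper's route is then a two-liner: one first checks the elementary identity
\[
\tf(\beta) \;=\; \lim_{N\to\infty} \tfrac1N \, \EE\bigl[\log(Z_N^{\beta,\w}\wedge 1)\bigr]
\]
(write $Z_N = (Z_N\wedge 1)(Z_N\vee 1)$ and observe that the $\vee 1$ part contributes $o(N)$ because $\EE[Z_N]=1$), then applies Jensen to get $\EE[\log(Z_N\wedge 1)] \le \log \EE[Z_N\wedge 1]$, plugs in \eqref{eq:upper-main}, and sends $N\to\infty$. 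No block decomposition, no sub-multiplicativity issue, no passage through point-to-point.

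For the lower bound you correctly land on super-additivity at a single scale $t^\star \asymp \e^{\pi/\sigma^2(\beta)}$ and correctly identify this as a refinement of \cite{BL17}. But the mechanism you sketch is off: the displayed ``well-known fact'' $\tf(\beta)\ge -\tfrac{\text{const}}{t}\cdot\tfrac{\EE[(Z_t)^2]-1}{t}$ is not a real inequality, and Paley--Zygmund only controls $\PP(Z_t\ge c)$, which says nothing about the left tail of $\log Z_t$ and hence nothing about $\EE[\log Z_t]$. The paper (following \cite[\S4]{BL17}) takes $N_c$ with $\sigma^2(\beta)R_{N_c}=1$ and proves $\EE[\log Z_{N_c}]\ge -C(\log N_c)^4$ via a \emph{concentration inequality for convex functions} from \cite{CTT17}: one first shows that $\PP\bigl(Z_{N_c}\ge \e^{-1},\ |\nabla\log Z_{N_c}|^2\le C(\log N_c)^3\bigr)\ge c/\log N_c$ (this step does use a second-moment bound, together with an estimate on $\E^{\otimes 2}[L_N\, \e^{\lambda_2(\beta)L_N}]$ for the two-replica overlap $L_N$), and then the concentration inequality turns this into a Gaussian left-tail bound for $\log Z_{N_c}$. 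The exponent $4$ comes from combining the gradient bound $(\log N_c)^3$ with an environment truncation at level $(\log N_c)^{3/2}$ and integrating the resulting tail; it is not an artifact of optimising over $t^\star$ as you suggest.
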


The upper bound in \eqref{eq:free-energy-bounds} is the main novelty: 
we deduce it from \Cref{thm:quantitative}, more precisely from the upper bound in \eqref{eq:upper-main}. 
We refer to \Cref{sec:others} for the proof.
The lower bound in \eqref{eq:free-energy-bounds} follows closely the strategy of \cite[\S4]{BL17} based on superadditivity and concentration arguments for \(\log Z_N^{\beta,\w}\); 
a few new estimates are needed, see \Cref{app:lower-free-energy} for details.

\begin{remark}
  We do not expect the lower bound in \eqref{eq:free-energy-bounds} to be optimal (the prefactor $\sigma^2(\beta)^{-4}$ is due to a limitation of the current techniques), however we believe the upper bound to be sharp, in particular,
  \[
    \tf(\beta) 
    \sim  - c\, \e^{- \frac{\pi}{\sigma^2(\beta)}} \quad \text{as } \beta\downarrow 0 \,.
  \]
\end{remark}

Our last result concerns \emph{variance estimates} for the directed polymer partition function, which will be used in Section~\ref{sec:strategy} to prove the lower bound in \eqref{eq:upper-main}. 
Let us focus on the initial condition given by the uniform distribution on the discrete ball of radius $\rho \sqrt{N}$ for some $\rho > 0$, namely
\begin{equation} 
  \mathcal{U}^{\mathrm{disc}}_{\rho \sqrt{N}}(x) 
  \coloneqq \frac{1}{\abs[\big]{B(0,\rho \sqrt{N}) \cap \Z^2}} \, \indic_{B(0,\rho \sqrt{N}) \cap \Z^2}(x) \,.
\end{equation}
The proof is given in \Cref{sec:Variances}.

\begin{proposition}[Variance estimates for directed polymers]
\label{prop:second-moment-DP}
  Define \( \th(N,\beta) \) as in~\eqref{eq:Ntheta0}.
  There is a constant \(c_3>0\) such that, uniformly over \(N\in \N\), \(\beta \in (0,1)\) and $\rho \in (0,\infty)$, we have
  \begin{equation}
  \label{eq:ub2mom-unif}
    \Var\bigl[Z_N^{\beta, \w}(\mathcal{U}^{\mathrm{disc}}_{\rho\sqrt{N}}) \bigr]   
    \leq c_3\, \frac{\exp\bigl(c_3 \, \e^{\th(N,\beta)} \bigr)}{\rho^2} \,.
  \end{equation}
  This can be sharpened for \(N\to\infty\), \(\beta\downarrow 0\) such that \(\th(N,\beta) \to \infty\): in this regime, uniformly over $\rho \in (0,\infty)$, we have
  \begin{equation} 
  \label{eq:ub2mom}
    \Var\bigl[Z_N^{\beta, \w}(\mathcal{U}^{\mathrm{disc}}_{\rho \sqrt{N}}) \bigr] 
    \le \frac{\exp\bigl((\e^{- \gamma}+o(1)) \, \e^{\th(N,\beta)} \bigr)}{\rho^2}.
  \end{equation}
\end{proposition}

\begin{remark}
  We believe the upper bound \eqref{eq:ub2mom} to be sharp.
  In other words, for (say) $\rho = 1$, one should also be able to prove that as \(N\to\infty\), \(\beta\downarrow 0\) with \(\th(N,\beta) \to \infty\) we have
  \[
    \Var\bigl[Z_N^{\beta, \w}(\mathcal{U}^{\mathrm{disc}}_{\sqrt{N}}) \bigr] 
    \geq \exp\bigl((\e^{-\gamma}+o(1)) \, \e^{\th(N,\beta)} \bigr) \,.
  \]
  Since this lower bound is not needed for our purposes, we omit the proof.
  One could also try to improve these estimates by ``bringing the $o(1)$ out of the exponential''.
\end{remark}

\section{Strategy of the proof of \texorpdfstring{\Cref{thm:quantitative}}{the main theorem}}
\label{sec:strategy}

In this section, we present the strategy of the proof of \Cref{thm:quantitative}. 
We first discuss the upper bound in \eqref{eq:upper-main}, which is the core of the paper, see \Cref{sec:steps}.
We then prove the lower bound by exploiting the variance estimates in \Cref{prop:second-moment-DP}, see \Cref{sec:lower-bound}.

Although we stated our main results for $\beta \in (0,1)$, in the rest of the paper we only work with $\beta$ small enough, say $\beta \in (0,\beta_0)$.
The extension to $\beta \in [\beta_0,1)$ then follows, after adjusting the constants, from monotonicity in $\beta$ of the truncated mean and of the fractional moments.

\subsection{Strategy of the upper bound in \texorpdfstring{\Cref{thm:quantitative}}{the main theorem}}
\label{sec:steps}

Our proof for the upper bound in \Cref{thm:quantitative} follows three main steps.
\begin{enumerate}
  \item First, we formulate a key result, \Cref{prop:key}, which is weaker than \Cref{thm:quantitative} in several respects: 
  (i)~the starting point is on the basic diffusive scale~\(\sqrt{N}\) without the factor $\e^{c_0 \, \e^\th}$ in \eqref{eq:upper-main};
  (ii)~the parameter \(\th \in [3,\infty)\) is fixed, so \Cref{prop:key} is proved only in the critical window;
  (iii)~the bound we obtain is polynomial in~\(\th\) instead of doubly exponential.
  \item Second, we use \emph{coarse-graining techniques} to extend the previous bound to any $\beta > 0$ (small) and \(N\in \N\), and at the same time obtain an exponential decay in~\(N\), see \Cref{prop:key+}. 
  By now, this is a well-established method, which dates back to~\cite{Lac10a} (in the context of directed polymers); technical details are postponed to \Cref{sec:coarse}.
  \item Third, we use a change-of-scale argument to quantify the effect of enlarging the scale of the starting point: this leads to \Cref{prop:key++}, which completes the proof of the upper bound in \Cref{thm:quantitative}.
\end{enumerate}
Thus, the proof of the upper bound in \Cref{thm:quantitative} reduces to \Cref{prop:key}, whose proof is given in \Cref{sec:keyprop}.
Let us now give some details on the steps outlined above.

\subsection*{Step 1: Key suboptimal result}

We first state a weaker version of \Cref{thm:quantitative}, in the form of the next key proposition. 
This is actually the core of the paper; we will present the key ideas of its proof in \Cref{sec:keyprop}. 

\begin{proposition}[Key proposition]
\label{prop:key}
  There is a universal constant \(C>0\) such that, for any given \(\th \in [3,\infty)\), if we consider any $\beta_N = \beta_N(\th)$ in the critical regime \eqref{eq:thetasharp}, we have
  \begin{equation}
  \label{eq:key-eq}
    \limsup_{N\to\infty} \sup_{f \in \cM_1^{\disc}(\sqrt{N})} \EE\bigl[Z_{N}^{\beta_N,\w}(f) \wedge 1\bigr] 
    \leq \frac{C}{\th}\,.
  \end{equation}
\end{proposition}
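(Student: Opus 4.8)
The plan is to estimate the truncated mean $\EE[Z_N^{\beta_N,\w}(f) \wedge 1]$ by combining a \emph{change of measure} with a \emph{coarse-grained chaos expansion} of the partition function. Since we are in the critical window with fixed $\th \in (1,\infty)$, the relevant scale is the diffusive one, and the polynomial-in-$\th$ bound $C/\th$ suggests that the mechanism is a $L^2$-type estimate for a suitably truncated/tilted version of $Z_N^{\beta_N,\w}(f)$, whose second moment should be of order $\th$ rather than exponentially large. First I would recall the standard strategy: write $\EE[Z \wedge 1] \le \PP_{\mathrm{tilt}}(\text{bad event}) + \EE_{\mathrm{tilt}}[Z \wedge 1 \,;\, \text{good event}]$ for a change of measure $\PP \to \PP_{\mathrm{tilt}}$ that lowers the disorder on a sparse random set of space-time points; the first term is controlled by the relative entropy (cost of the tilt), and the second term by a truncated second-moment computation under the tilted law. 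The key point, and the main technical novelty advertised in the introduction, is to perform this second-moment computation not for $Z_N^{\beta_N,\w}(f)$ itself but for a \emph{proxy} obtained from a coarse-grained chaos expansion (the object built in \Cref{sec:proxy}): one partitions $\llbracket 1,N\rrbracket$ into $\sim \log N / \log\log N$ (or similarly many) blocks, expands the polymer partition function into a multilinear series indexed by which blocks carry a ``chaos'' contribution, and retains only the leading coarse-grained skeleton.

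Concretely, I expect the argument to proceed as follows. \textbf{(a)} Expand $Z_N^{\beta_N,\w}(f)$ as a polynomial chaos in the variables $\xi_{n,x} = \e^{\beta\w(n,x)-\lambda(\beta)}-1$, and reorganize it over a dyadic or equally-spaced partition of $\llbracket 1,N\rrbracket$ into blocks, keeping track of the first time index in each block where a variable appears; the replica-overlap structure is governed by the renewal-type kernel whose mass is $\sigma^2(\beta_N) R_N = 1 + \th/\log N + o(1/\log N)$ by \eqref{eq:thetasharp}. \textbf{(b)} Define the proxy $\widetilde Z$ as the truncation of this expansion to ``few-block'' skeletons and show, using the local limit theorem for the simple random walk and the computation of $R_N$ in \eqref{eq:RN}, that its second moment under a well-chosen tilted measure is $O(\th)$ uniformly over $f \in \cM_1^{\disc}(\sqrt N)$ — this is where the factor $1/\th$ ultimately comes from, via Paley–Zygmund or directly via $\EE_{\mathrm{tilt}}[\widetilde Z \wedge 1] \le \sqrt{\EE_{\mathrm{tilt}}[\widetilde Z^2]}\cdot(\ldots)$, combined with the fact that $\EE_{\mathrm{tilt}}[\widetilde Z]$ is bounded below. \textbf{(c)} Control the discarded ``many-block'' part of the expansion and the tilt cost, showing both are $o(1)$ as $N\to\infty$. \textbf{(d)} Take $\limsup_{N\to\infty}$ and optimize the tilt parameters to get the clean bound $C/\th$.

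The step I expect to be the main obstacle is \textbf{(b)}: producing a proxy that is simultaneously (i) a good enough lower approximation to $Z_N^{\beta_N,\w}(f)$ that controlling it controls the truncated mean of the true partition function, (ii) amenable to an \emph{exact} or near-exact second-moment computation whose value is genuinely $O(\th)$ and not $O(\th^2)$ or worse, and (iii) compatible with the change of measure (i.e.\ the tilt must act cleanly on the coarse-grained skeleton). Getting the right power of $\th$ requires carefully tracking the combinatorics of the block overlaps and exploiting the near-criticality $\sigma^2(\beta_N)R_N \approx 1$: the geometric series $\sum_k (\sigma^2 R_N)^k$ that would diverge at criticality is tamed here precisely by the $\th/\log N$ gap, and extracting $O(\th)$ rather than $O(\text{poly}(\log N))$ hinges on the coarse-graining replacing $\log N$ by an effective scale. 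A secondary difficulty is uniformity over all $f \in \cM_1^{\disc}(\sqrt N)$ (arbitrary initial conditions supported in the diffusive ball), which forces all heat-kernel estimates to be applied in a way that does not see the detailed shape of $f$, only the constraint $\sum_z f(z)=1$ and its support; the uniform bound $\sum_x f(x)\P(S_n = x - \cdot) \le \P(S_n \in \text{ball})$-type inequalities should suffice, but must be tracked throughout.
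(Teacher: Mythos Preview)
Your outline has the right keywords but misidentifies the mechanism in two places, and as written would not produce $C/\th$. First, the change of measure is \emph{not} a disorder-lowering tilt on a sparse set (the Lacoin-style shift you describe). The paper works directly with the \emph{size-biased} law $\dd\tPP_x = Z_N^{\beta,\w}(x)\,\dd\PP$ and the elementary bound $\EE[Z\wedge 1]\le \PP(A)+\tPP(A^c)$ for any event $A$ (this is $1-d_{\mathrm{TV}}(\PP,\tPP)$, see Remark~\ref{rem:TVdistance}). Before that, there is a change-of-scale step you omit: one first reduces the starting ball from $\sqrt{N}$ to $\sqrt{\tN}$ with $\tN=\e^{-\eta}N$, $1\le\eta<\th$, at the price of a factor $N/\tN=\e^{\eta}$ (Lemma~\ref{lem:change-scale2}); this is what handles the uniformity over $f$ and yields the working inequality
\[
\sup_{f\in\cM_1^{\disc}(\sqrt N)}\EE[Z_N^{\beta,\w}(f)\wedge 1]\;\le\;2\,\tfrac{N}{\tN}\,\PP(A_N)+2\sup_{|x|\le\sqrt{\tN}}\tPP_x(A_N^c)\,.
\]

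Second, the proxy is \emph{not} an approximation $\tilde Z\approx Z$ whose second moment one bounds by $O(\th)$; it is a \emph{test statistic} $X$ with $\EE[X]=0$ but large size-biased mean $\tEE_x[X]$. Concretely, split $\llb 1,N\rrb$ into $2M=\e^{\eta}$ strips $I_j$ of width $\tN$ and set $X=\sum_\ell X_{2\ell}$, where $X_j$ is the (order-$\le\log N$) chaos of $Z_N$ restricted to $I_j\times\Z^2$ (i.e.\ the \emph{linear} term of the coarse-grained expansion \eqref{eq:chaos-coarse}, not a ``few-block'' truncation of $Z$). One then takes $A_N=\{X\ge\frac12\tEE_{\inf}[X]\}$ and bounds both probabilities by Chebyshev, i.e.\ by $\Var(X)/\tEE_{\inf}[X]^2$ and $\tVV_x(X)/\tEE_{\inf}[X]^2$. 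The factor $1/\th$ does not come from a second moment being $O(\th)$; it comes from $\tVV_x(X)/\tEE_{\inf}[X]^2\lesssim 1/\log M\asymp 1/\eta$ and the choice $\eta=\th/3$. The genuinely hard estimate is the size-biased variance $\tVV_x(X)=\EE[X^2 Z_N^{\beta,\w}(x)]-\tEE_x[X]^2$, a third-moment-type computation (Section~\ref{sec:technicalII}) controlling pairwise-intersection diagrams between three chaos expansions; your plan (b), a second moment under a disorder-lowering tilt, does not access this and would at best reproduce the Berger--Lacoin bound rather than the sharper $C/\th$.
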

\begin{remark}
  The proof of this proposition relies on a change of measure argument, which we detail in \Cref{sec:keyprop}. 
  An important step in this argument is the choice of an event \(A_N\) which is atypical under \(\PP\) but typical under the size-biased measure \(\tPP\) defined in \eqref{eq:size-biased} below (see for instance \Cref{prop:change} and \Cref{prop:key2}).
  This is the only step at which the method becomes inherently model-specific (see \cite{JL24a,JL25} for the case of transient dimensions).
  However, we propose a ``canonical'' way of choosing the event \(A_N\), we refer to \Cref{sec:proxy} for details.
\end{remark}

We view \Cref{prop:key} as an estimate in the (upper) critical window, that is, we will use it for~\(\th\) large but fixed, in order to apply a ``finite-volume criterion'' in the next step.
Let us now deduce from~\eqref{eq:key-eq} a corresponding bound on a fractional moment of \(Z_N^{\beta_N,\w}(f)\): 
\begin{equation}
  \label{eq:key-eq-moments}
  \limsup_{N\to\infty} 
  \sup_{f \in \cM_1^{\disc}(\sqrt{N})} \EE\Bigl[Z_{N}^{\beta_N,\w}(f)^{1/2}\Bigr] \leq \frac{\sqrt{2
  C}}{\sqrt{\th}}\,,
\end{equation}
thanks to the following general result.

\begin{lemma}
  \label{lem:frac-moment}
For any random variable $Z \ge 0$ with $\EE[Z] = 1$ and any $\gamma \in (0,1)$ we have
\[
  \EE[Z\wedge 1] \leq \EE[Z^{\gamma}]\le 2^{\gamma} \, \EE[Z\wedge 1]^{\gamma \wedge (1-\gamma)} \,.
\]
In particular
\[
  \EE[Z\wedge 1] \leq \EE[Z^{1/2}]\le\sqrt{2} \, \EE[Z\wedge 1]^{1/2} \,.
\]
\end{lemma}

\begin{proof}
The first inequality simply uses that \(x\leq x^{\gamma}\) for \(x\in [0,1]\) to get that \(\EE[Z\wedge 1] \leq \EE[ (Z\wedge 1)^{\gamma}] \leq \EE[Z^{\gamma}]\).
For the second one, consider first $\gamma \le \frac{1}{2}$.
Using the identity \(Z=(Z\wedge 1)(Z\vee 1)\), we have
\[
  \EE[Z^\gamma] = \EE\bigl[(Z \wedge 1)^\gamma (Z \vee 1)^\gamma\bigr]
  \le \EE\bigl[Z \wedge 1\bigr]^\gamma \, \EE\bigl[(Z \vee 1)^{\frac{\gamma}{1-\gamma}}\bigr]^{1-\gamma} \,,
\]
by H\"older's inequality.
Since $\frac{\gamma}{1-\gamma} \le 1$ for $\gamma \le \frac{1}{2}$, Jensen's inequality yields $\EE\bigl[(Z \vee 1)^{\frac{\gamma}{1-\gamma}}\bigr] \le \EE[Z \vee 1]^{\frac{\gamma}{1-\gamma}}$ and note that $\EE[Z \vee 1]\le \EE[Z]+ 1 = 2$. 

Next consider $\gamma > \frac{1}{2}$. 
Since $\frac{\gamma}{1-\gamma} > 1$, we have $x^{\frac{\gamma}{1-\gamma}} \le x$ for $x \in [0,1]$, and H\"older's inequality gives
\[
  \EE[Z^\gamma] = \EE\bigl[(Z \wedge 1)^\gamma (Z \vee 1)^\gamma\bigr]
  \le \EE\bigl[(Z \wedge 1)^{\frac{\gamma}{1-\gamma}}\bigr]^{1-\gamma}
  \, \EE\bigl[Z \vee 1\bigr]^\gamma
  \le \EE\bigl[Z \wedge 1\bigr]^{1-\gamma}
  \, \EE\bigl[Z \vee 1\bigr]^\gamma  \,,
\]
which completes the proof since $\EE[Z \vee 1] \le 2$.
\end{proof}

\subsection*{Step 2: Finite-volume criterion via a coarse-graining procedure.}

Let us now upgrade the result of the key \Cref{prop:key}, more precisely the fractional moment version \eqref{eq:key-eq-moments}, by extending it to arbitrary \(N\in\N\) and \(\beta > 0\) small (that is, not necessarily in the critical window) and improving the bound with an exponential decay in \(N\).

Recalling \eqref{eq:Ntheta0} and \eqref{eq:RN}, for any $\beta > 0$ and $\th\in\R$ we define $N_\beta(\th)\in\N$ which inverts asymptotically the relation \eqref{eq:thetasharp}:
\begin{equation}
\label{eq:N0}
  N_\beta(\th) \coloneqq
  \pigl\lfloor \e^{-\alpha} \, \e^{\th} \, \e^{\frac{\pi}{\sigma^2(\beta)}} \pigr\rfloor
  =(1+o(1))\, \e^{-\alpha} \, \e^{\th} \, \e^{\frac{\pi}{\sigma^2(\beta)}}
  \quad \text{ as } \beta\downarrow 0\,.
\end{equation} 

\begin{proposition}[Improved bound]\label{prop:key+}
 There exist constants $\beta_0, \hat\th \in (0,\infty)$ such that the following holds: defining $\hat{N}_\beta \coloneqq N_\beta(\hat\th)$ by \eqref{eq:N0}, we have
\begin{equation} \label{eq:aftercoarse-graining}
  \forall \beta \in (0,\beta_0) \,, \
  \forall N \in \N : \quad
  \sup_{f \in \cM_1^{\disc}\bigl(\sqrt{\hat N_\beta}\bigr)} \EE\bigl[Z_{N}^{\beta,\w}(f)^{1/2}\bigr] 
    \le 3 \,
    \e^{-N/\hat N_\beta} \,.
\end{equation}
\end{proposition}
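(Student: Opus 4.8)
\medskip

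The plan is to upgrade \eqref{eq:key-eq-moments} to \eqref{eq:aftercoarse-graining} by a coarse-graining argument of the type pioneered in \cite{Lac10a}. First I would fix $\th = \hat\th$ large enough (to be chosen at the end) so that the constant $\sqrt{2C}/\sqrt{\hat\th}$ in \eqref{eq:key-eq-moments} is as small as we like; by \Cref{prop:key} there is then a scale $N_0 = N_0(\hat\th)$ such that for all $N \ge N_0$ and all $\beta = \beta_N(\hat\th)$ satisfying \eqref{eq:thetasharp} one has $\sup_{f \in \cM_1^{\disc}(\sqrt N)} \EE[Z_N^{\beta,\w}(f)^{1/2}] \le \eta$ for a small constant $\eta < 1$. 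The point of \eqref{eq:thetasharp}/\eqref{eq:Ntheta0} is that the pair $(N,\beta)$ is effectively one-dimensional: fixing $\th = \hat\th$ and letting $\beta \downarrow 0$ forces $N = N_\beta(\hat\th) = \hat N_\beta \to \infty$, so for $\beta < \hat\beta$ small we automatically have $\hat N_\beta \ge N_0$ and the base estimate applies at scale $\hat N_\beta$ with disorder $\beta$.

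\medskip

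The core step is the coarse-graining / finite-volume criterion. I would partition time $\llb 1, N \rrb$ into blocks of length $\hat N_\beta$ and partition space $\Z^2$ into boxes of side $\sim \sqrt{\hat N_\beta}$. Using the Markov property of the random walk, I would expand $Z_N^{\beta,\w}(f)$ along the sequence of space-boxes visited at the block-boundary times: schematically,
\[
  Z_N^{\beta,\w}(f) = \sum_{\text{coarse-grained paths } \Gamma} \ (\text{product of single-block partition functions along } \Gamma)\,,
\]
where each single-block factor is (a spatial translate of) a point-to-plane partition function over a time interval of length $\le \hat N_\beta$ started from some probability density supported in a box of radius $\sqrt{\hat N_\beta}$ — precisely the object controlled by the base estimate, after checking that the restriction of a simple-random-walk transition kernel to such a box, suitably normalised, lies in $\cM_1^{\disc}(\sqrt{\hat N_\beta})$. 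Applying $\EE[(\sum \cdots)^{1/2}] \le \sum \EE[\cdots^{1/2}]$ (subadditivity of $x\mapsto x^{1/2}$) and the independence of the disorder across disjoint time-blocks, the fractional moment of $Z_N^{\beta,\w}(f)$ is bounded by $\sum_\Gamma \prod (\text{block fractional moments}) \le \sum_\Gamma \eta^{\#\text{blocks}(\Gamma)}$. The number of coarse-grained paths of a given length grows only geometrically (each block the walk moves to one of a bounded number of neighbouring boxes, by a standard random-walk deviation estimate trimming the boxes reachable in time $\hat N_\beta$), so for $\eta$ small enough the geometric series converges and yields a bound of the form $C' \eta^{\lfloor N/\hat N_\beta\rfloor}$; choosing $\hat\th$ large enough that $\eta \le \e^{-1}$ and absorbing the finitely many boundary blocks into the constant $3$ gives \eqref{eq:aftercoarse-graining}. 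The regime $N < \hat N_\beta$ (fewer than one full block) is trivial since then $\e^{-N/\hat N_\beta} \ge \e^{-1}$, and one only needs $\EE[Z_N^{\beta,\w}(f)^{1/2}] \le \EE[Z_N^{\beta,\w}(f)]^{1/2} = 1 \le 3\e^{-1}$ by Jensen.

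\medskip

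The main obstacle, as usual in these coarse-graining arguments, is bookkeeping: controlling the combinatorial entropy of coarse-grained paths against the per-block smallness $\eta$, and making sure the single-block partition functions that appear are genuinely of the form covered by \Cref{prop:key} — this requires truncating the random walk's displacement over a time-block to within $O(\sqrt{\hat N_\beta})$ (paying a negligible Gaussian-tail error) so that only boundedly many spatial boxes are accessible per step, and it requires the base estimate to hold \emph{uniformly} over all initial densities in $\cM_1^{\disc}(\sqrt{\hat N_\beta})$, which is exactly why \eqref{eq:key-eq}–\eqref{eq:key-eq-moments} are phrased as a supremum over $f$. Since the excerpt says the technical details of this step are postponed to \Cref{sec:coarse} and that the method is by now well established, I would present the above as the structural argument and defer the path-counting and truncation estimates to that section.
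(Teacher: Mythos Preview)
Your plan is correct and matches the paper's approach essentially line for line: fix $\hat\th$ large so the base fractional-moment bound from \Cref{prop:key} is below a small absolute threshold, reparametrise the limsup in \eqref{eq:key-eq-moments} as $\beta\downarrow 0$ with $N=\hat N_\beta$ to obtain the base estimate at scale $\hat N_\beta$ for all $\beta<\hat\beta$, and then invoke the coarse-graining/finite-volume criterion (formalised in the paper as \Cref{prop:coarse}, proved in \Cref{sec:coarse}) to propagate this to all $N$ with exponential decay in $N/\hat N_\beta$; the case $N<\hat N_\beta$ is handled by Jensen exactly as you say. The only cosmetic difference is that the paper does not truncate the per-block displacement but instead sums $\mathcal Q(y)$ over all $y\in\Z^2$, using a Gaussian tail bound on $\mathcal Q(y)$ for large $|y|$ together with the uniform bound $\mathcal Q(y)\le\tfrac{1}{300}$ for small $|y|$ to get $\sum_y \mathcal Q(y)\le \e^{-1}$ --- this avoids a separate truncation error term, but your version would work equally well.
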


\begin{proof}
We now reparametrize \eqref{eq:key-eq-moments} in terms of $\beta$, using \(N = N_\beta(\th)\): if we let $\beta \downarrow 0$, then the pair \((N_\beta(\th),\beta)\) lies asymptotically in the critical regime \eqref{eq:thetasharp} with $N_\beta(\th) \to \infty$ from \eqref{eq:N0}. 
Then \eqref{eq:key-eq-moments} can be rewritten as follows:
\[
  \limsup_{\beta \downarrow 0} \sup_{f \in \cM_1^{\disc}(\sqrt{N_\beta(\th)})} \EE\bigl[Z_{N_\beta(\th)}^{\beta,\w}(f)^{1/2}\bigr] 
  \leq
  \frac{\sqrt{2
  C}}{\sqrt{\th}} \,.
\]
In particular, given any $\th \ge 3$, we can fix a suitable $\tilde\beta(\th) > 0$ small enough such that (say)
\begin{equation} \label{eq:reco}
    \forall \beta \in \bigl(0,\tilde\beta(\th)\bigr) \colon \qquad
  \sup_{f \in \cM_1^{\disc}(\sqrt{N_\beta(\th)})} \EE\bigl[Z_{N_\beta(\th)}^{\beta,\w}(f)^{1/2}\bigr] 
  \leq
  \frac{2 \, \sqrt{C}}{\sqrt{\th}} \,.
\end{equation}

We next improve this estimate allowing $N\in\N$ to be arbitrary.
The core idea is the following \emph{coarse-graining} result, which gives a finite-volume criterion for the exponential decay of the partition function: 
it shows that if the fractional moment is small at some scale \(L\), then it starts decreasing exponentially in~\(N/L\).
This result is somewhat classical in the literature, but we provide a self-contained proof in \Cref{sec:coarse} below. 
We recall that \(\cM_1^{\disc}(r)\) is defined in \eqref{eq:disc-density}, where we replace for convenience \(\abs{\,\cdot\,}\) with \(\abs{\,\cdot\,}_\infty\).

\begin{proposition}[Coarse-graining]
  \label{prop:coarse}
  If there exist \(L \in \N\), \(\beta > 0\) such that 
  \begin{equation*}
    \sup_{f \in \cM_1^{\disc}(\sqrt{L})} \EE\bigl[Z_{L}^{\beta,\w}(f)^{1/2}\bigr] 
    \le
    \frac{1}{113} \,,
  \end{equation*}
  then for all \(N\in\N\), we have
\begin{equation}\label{eq:NL}
      \sup_{f \in \cM_1^{\disc}(\sqrt{L})} \EE\bigl[Z_{N}^{\beta,\w}(f)^{1/2}\bigr] 
    \le
    3 \, \e^{- N/L} \,.
\end{equation}
\end{proposition}

\begin{remark}
It is enough to prove \eqref{eq:NL} for $N\geq L$, since for $N < L$ we have $\EE\bigl[Z_{N}^{\beta,\w}(f)^{1/2}\bigr] \le
\EE\bigl[Z_{N}^{\beta,\w}(f)\bigr]^{1/2} = 1$.
Also, let us stress that the constant \(\frac{1}{113}\) depends on the distribution of the random walk (we have simply taken a number that works for the simple random walk).
\end{remark}

Recalling \eqref{eq:reco}, we now fix \(\hat\th = (2\cdot 113)^2C\vee 3\) so that \(\frac{2 \, \sqrt{C}}{\sqrt{\hat\th}} \le \frac{1}{113}\).
If we correspondingly define $\beta_0 \coloneqq \tilde{\beta}(\hat\th) > 0$ and $\hat{N}_\beta \coloneqq N_\beta(\hat\th)$, then \eqref{eq:reco} yields
\[
  \forall \beta \in (0,\beta_0) \colon \qquad
  \sup_{f \in \cM_1^{\disc}(\sqrt{\hat{N}_\beta})} \EE\bigl[Z_{\hat{N}_\beta}^{\beta,\w}(f)^{1/2}\bigr] 
  \leq \frac{1}{113} \,.
\]
It only remains to apply \Cref{prop:coarse} with \(L = \hat{N}_\beta\) to complete the proof of \Cref{prop:key+}.
\end{proof}

\subsection*{Step 3. The change of scale argument.}

We finally show that the scale of the starting point in the bound \eqref{eq:aftercoarse-graining} can be enlarged, to get the following result.

\begin{proposition}[Large scale bound]\label{prop:key++}
 In the same setting as \Cref{prop:key+}, we have
\begin{equation} \label{eq:large-scale}
  \forall \beta \in (0,\beta_0) \,, \
  \forall N \in \N : \quad
    \sup_{f \in \cM_1^{\disc}\pigl(\e^{\frac{1}{2} N/\hat{N}_\beta} \sqrt{ N\,}\, \pigr)} \EE\bigl[Z_{N}^{\beta,\w}(f)^{1/2}\bigr]
  \le 7 \, \e^{-\frac{1}{3}  N/\hat{N}_\beta} \,.
\end{equation}
\end{proposition}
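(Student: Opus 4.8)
\textbf{Proof proposal for Proposition~\ref{prop:key++}.}

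The plan is to deduce the large-scale bound \eqref{eq:large-scale} from the diffusive-scale bound \eqref{eq:aftercoarse-graining} by a \emph{change of scale argument}: we decompose the support of the initial condition $f \in \cM_1^{\disc}(\sqrt{\e^{N/\hat N_\beta} N})$ into diffusive-scale boxes of side $\sqrt{\hat N_\beta}$ (or $\sqrt{N}$), and we bound $Z_N^{\beta,\w}(f)$ by a weighted sum of partition functions started from each box, to which \eqref{eq:aftercoarse-graining} applies. Concretely, first I would write $f = \sum_{i} p_i f_i$ where $p_i \ge 0$, $\sum_i p_i = 1$, and each $f_i \in \cM_1^{\disc}(\sqrt{\hat N_\beta})$ is the normalised restriction of $f$ to the $i$-th box $B_i$ in a tiling of $B(0, \sqrt{\e^{N/\hat N_\beta} N})$ by boxes of side of order $\sqrt{\hat N_\beta}$; there are of order $\e^{N/\hat N_\beta} N / \hat N_\beta$ such boxes. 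By linearity, $Z_N^{\beta,\w}(f) = \sum_i p_i Z_N^{\beta,\w}(f_i)$.

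Next, using subadditivity of $x \mapsto x^{1/2}$, namely $(\sum_i a_i)^{1/2} \le \sum_i a_i^{1/2}$, together with $\sum_i p_i = 1$ and Jensen (or just $p_i^{1/2} \le 1$ is too lossy, so instead use $p_i^{1/2} = p_i^{1/2}$ and Cauchy--Schwarz on the number of boxes), I would estimate
\[
  \EE\bigl[Z_N^{\beta,\w}(f)^{1/2}\bigr]
  \le \sum_i p_i^{1/2} \, \EE\bigl[Z_N^{\beta,\w}(f_i)^{1/2}\bigr]
  \le \Bigl(\sum_i p_i\Bigr)^{1/2} \Bigl(\sum_i \EE\bigl[Z_N^{\beta,\w}(f_i)^{1/2}\bigr]^2\Bigr)^{1/2}.
\]
Wait --- this is still too crude since the last factor has of order $\e^{N/\hat N_\beta} N/\hat N_\beta$ terms each of size $\le 9 \e^{-2N/\hat N_\beta}$ by \eqref{eq:aftercoarse-graining}, giving a bound of order $(\text{number of boxes})^{1/2} \e^{-N/\hat N_\beta}$; since the number of boxes is $\e^{N/\hat N_\beta} N/\hat N_\beta$, its square root is $\e^{N/(2\hat N_\beta)} (N/\hat N_\beta)^{1/2}$, and the product is $\e^{-N/(2\hat N_\beta)}(N/\hat N_\beta)^{1/2}$, which for $N \ge \hat N_\beta$ is indeed $\le 7 \e^{-N/(3\hat N_\beta)}$ after absorbing the polynomial factor (and for $N < \hat N_\beta$ the trivial bound $\le 1$ with a constant suffices). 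So the key mechanism is: \emph{enlarging the support by a factor $\e^{N/\hat N_\beta}$ in each coordinate increases the number of diffusive boxes by $\e^{N/\hat N_\beta}$, whose square root is dominated by the gain $\e^{-N/\hat N_\beta}$ coming from \eqref{eq:aftercoarse-graining}, leaving a net decay.}

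The main obstacle is making the box decomposition clean at the discrete level: one must check that a function $f : \Z^2 \to [0,1]$ supported in $B(0,r)$ with $r = \sqrt{\e^{N/\hat N_\beta} N}$ can genuinely be written as $\sum_i p_i f_i$ with $f_i \in \cM_1^{\disc}(\sqrt{\hat N_\beta})$ --- this requires the diffusive boxes to have diameter $\le 2\sqrt{\hat N_\beta}$ (fine, up to adjusting the tiling box side to $\hat N_\beta^{1/2}/2$ or so and being careful that $\hat N_\beta$ need not be a perfect square), and requires counting the number of boxes intersecting $B(0,r)$, which is $\le c\, r^2/\hat N_\beta = c\, \e^{N/\hat N_\beta} N/\hat N_\beta$ for a universal $c$. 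A second, minor point is that \eqref{eq:aftercoarse-graining} gives $\EE[Z_N^{\beta,\w}(f_i)^{1/2}] \le 3 \e^{-N/\hat N_\beta}$ uniformly over $f_i \in \cM_1^{\disc}(\sqrt{\hat N_\beta})$, but one must ensure each $f_i$ is supported in a \emph{ball} of radius $\sqrt{\hat N_\beta}$ centred at the origin in the shifted coordinates --- this is where the translation invariance of the i.i.d.\ disorder $\omega$ is used (via $\EE$, the law of $Z_N^{\beta,\w}(f_i)$ is invariant under spatial shifts of $f_i$), so one may recentre each box. Finally I would collect constants: combining $3$ from \eqref{eq:aftercoarse-graining}, the square root of the box count, and the polynomial-in-$(N/\hat N_\beta)$ factor absorbed into the exponential at the cost of worsening the rate from $N/\hat N_\beta$ to $\tfrac13 N/\hat N_\beta$, yields the stated constant $7$ and rate $\tfrac13$, completing the proof.
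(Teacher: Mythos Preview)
Your proposal is correct and follows essentially the same route as the paper. The paper isolates the key step as a general ``change of scale'' lemma: for $1\le A\le B$ and $\gamma\in(0,1)$,
\[
\sup_{f\in\cM_1^{\disc}(\sqrt{B})}\EE\bigl[Z_N^{\beta,\w}(f)^\gamma\bigr]\le\Bigl(4\,\tfrac{B}{A}\Bigr)^{1-\gamma}\sup_{g\in\cM_1^{\disc}(\sqrt{A})}\EE\bigl[Z_N^{\beta,\w}(g)^\gamma\bigr],
\]
proved exactly via your box decomposition $f=\sum_i\alpha_i g_i$, subadditivity $(\sum z_i)^\gamma\le\sum z_i^\gamma$, translation invariance, and then H\"older $\sum_i\alpha_i^\gamma\le K^{1-\gamma}$ (your Cauchy--Schwarz is the $\gamma=\tfrac12$ instance of this); applying it with $A=\hat N_\beta$, $B=\e^{N/\hat N_\beta}N$ and the bound from \Cref{prop:key+} gives $6(N/\hat N_\beta)^{1/2}\e^{-\frac12 N/\hat N_\beta}\le 7\,\e^{-\frac13 N/\hat N_\beta}$, just as you obtain.
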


Assuming \Cref{prop:key++} for the moment, this completes the proof of the upper bound in \Cref{thm:quantitative}.
Indeed, if we define $\th = \th(N,\beta)$ by \eqref{eq:Ntheta0}, recalling \eqref{eq:etheta} and $\hat{N}_\beta = N_\beta(\hat{\th})$ from \Cref{prop:key+}, with $N_\beta(\cdot)$ defined in \eqref{eq:N0}, we can bound
\begin{equation*}
  \frac{N}{\hat{N}_\beta} 
  \ge N \, \e^{\alpha} \, \e^{-\hat\th} \, \e^{-\frac{\pi}{\sigma^2(\beta)}}
  = \e^{\alpha - \alpha_N} \, \e^{\th(N,\beta) - \hat\th} \ge \mathsf{c} \, \e^{\th(N,\beta) - \hat\th} \qquad \text{with} \quad
  \mathsf{c} \coloneqq \inf_{N\in\N} \e^{\alpha - \alpha_N}  > 0 \,.
\end{equation*}
Plugged into \eqref{eq:large-scale} and using \Cref{lem:frac-moment}, this yields \eqref{eq:upper-main} with \(c_0 = \frac{\mathsf{c}}{2}\e^{-\hat\th}\) and \(c_2= \min\{\frac{\mathsf{c}}{3}\e^{-\hat{\th}} , \frac{1}{7}\}\).

\begin{proof}[Proof of \Cref{prop:key++}]
The key tool is the following general result, which allows to control fractional moments with starting points at two different scales.

\begin{lemma}[Changing scales]
  \label{lem:change-scale}
  For any \(1\leq A \leq B\) and any \(\gamma \in (0,1)\), we have
  \[
  \sup_{f \in \cM_1^{\disc}(\sqrt{B})} \EE\Bigl[Z_N^{\beta,\w} (f)^\gamma\Bigr] 
  \leq
  \biggl(4 \,\frac{B}{A}\biggr)^{1-\gamma} \, \sup_{g \in \cM_1^{\disc}(\sqrt{A})} \EE\Bigl[Z_N^{\beta,\w} (g)^\gamma\Bigr] \,.
  \]
\end{lemma}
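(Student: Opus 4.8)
The plan is to exploit the fact that any density $f$ supported on the ball of radius $\sqrt{B}$ can be decomposed as a convex combination of translated copies of densities supported on the smaller ball of radius $\sqrt{A}$, together with the concavity of $x \mapsto x^\gamma$ and the translation covariance of the partition function. First I would tile the ball $B(0,\sqrt{B})\cap\Z^2$ by a family of (essentially disjoint) blocks $(Q_i)_i$, each of which is a translate of $B(0,\sqrt{A})\cap\Z^2$; since both balls have the stated radii, the number of blocks needed is of order $B/A$, and a crude volume count gives that we can do this with at most $4\,B/A$ blocks (this is where the constant $4$ comes from — it absorbs the boundary/rounding losses for the simple random walk lattice). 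For each block $Q_i$ let $x_i$ denote its centre, so that $g_i(z)\coloneqq f(z+x_i)\,\mathbf 1_{Q_i - x_i}(z)\,/\,p_i$ is a probability density supported in $B(0,\sqrt{A})\cap\Z^2$, where $p_i\coloneqq\sum_{z\in Q_i} f(z)$ are the block masses, which sum to $1$.

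Next I would write $Z_N^{\beta,\w}(f)=\sum_i p_i\, Z_N^{\beta,\w}(f\mathbf 1_{Q_i})$ and use the subadditivity of $x\mapsto x^\gamma$ on $[0,\infty)$ (valid for $\gamma\in(0,1)$) to get $Z_N^{\beta,\w}(f)^\gamma\le\sum_i p_i^\gamma\,Z_N^{\beta,\w}(f\mathbf 1_{Q_i})^\gamma$. Taking expectations and using translation invariance of the environment law $\mathbb P$ to shift $Q_i$ back to a ball centred at the origin, each term is bounded by $p_i^\gamma\,\sup_{g\in\cM_1^{\disc}(\sqrt A)}\EE[Z_N^{\beta,\w}(g)^\gamma]$ (after factoring out $p_i^{\gamma}$ from the normalisation $g_i$, noting $Z_N^{\beta,\w}(f\mathbf 1_{Q_i})=p_i\,Z_N^{\beta,\w}(g_i(\cdot-x_i))$, so $Z_N^{\beta,\w}(f\mathbf 1_{Q_i})^\gamma = p_i^\gamma Z_N^{\beta,\w}(g_i(\cdot - x_i))^\gamma$, and the translate has the same law as $Z_N^{\beta,\w}(g_i)$). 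This leaves the combinatorial factor $\sum_i p_i^{2\gamma}$; hmm — more carefully, collecting the $p_i^\gamma$ together with the $p_i^\gamma$ already pulled out, one gets $\sum_i p_i^{\gamma}\cdot p_i^{\gamma}$... Let me instead just keep $\sum_i p_i^\gamma$, which by Jensen/Hölder (power-mean inequality, since $\gamma\le 1$ and there are $\le 4B/A$ terms with $\sum_i p_i=1$) is at most $(4B/A)^{1-\gamma}$. Combining, $\EE[Z_N^{\beta,\w}(f)^\gamma]\le(4B/A)^{1-\gamma}\sup_{g\in\cM_1^{\disc}(\sqrt A)}\EE[Z_N^{\beta,\w}(g)^\gamma]$, and taking the supremum over $f$ finishes the proof.

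The main obstacle I anticipate is bookkeeping the normalisations cleanly: one must track which powers of $p_i$ come from the subadditivity step versus from renormalising $f\mathbf 1_{Q_i}$ into a probability density, and verify that the total power of $p_i$ appearing is exactly $\gamma$ (not $2\gamma$), so that the power-mean bound $\sum_i p_i^\gamma\le(\#\text{blocks})^{1-\gamma}$ applies. The cleanest route is to not renormalise at all: write $f=\sum_i p_i\,\tilde g_i$ with $\tilde g_i\in\cM_1^{\disc}$ a translate of a density supported in $B(0,\sqrt A)$, then $Z_N^{\beta,\w}(f)=\sum_i p_i Z_N^{\beta,\w}(\tilde g_i)$, apply subadditivity to get $\sum_i p_i^\gamma Z_N^{\beta,\w}(\tilde g_i)^\gamma$, take expectations, use translation invariance, pull out the common $\sup$, and bound $\sum_i p_i^\gamma\le(4B/A)^{1-\gamma}$. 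The only genuinely model-specific input is the geometric claim that $\lceil\,\#\text{blocks}\rceil\le 4B/A$, which is a routine lattice-covering estimate for the simple random walk periodicity constraint.
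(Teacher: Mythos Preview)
Your proposal is correct and follows essentially the same route as the paper: decompose $f=\sum_i p_i\,g_i$ with each $g_i$ a (translated) probability density of support radius $\sqrt{A}$, apply subadditivity of $x\mapsto x^\gamma$, use translation invariance to pull out the supremum, and finish with H\"older's inequality $\sum_i p_i^\gamma\le K^{1-\gamma}$. The paper's only extra precision is that it partitions an $L^\infty$ ball (i.e.\ a square) rather than a Euclidean one, which makes the covering count $K=\bigl\lceil\frac{2\lfloor\sqrt B\rfloor+1}{2\lfloor\sqrt A\rfloor+1}\bigr\rceil^2\le 4B/A$ immediate; your ``cleanest route'' at the end is exactly this argument, and the momentary worry about a $p_i^{2\gamma}$ is a false alarm you correctly dismissed.
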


\begin{proof}
  We can include the \(L^{\infty}\) ball of radius \(\sqrt{B}\) in the union of \(K\) disjoint \(L^\infty\) balls \((B_i)_{1\leq i\leq K}\) of radius~\(\sqrt{A}\) (we can estimate \(K \le 4 \frac{B}{A}\)).
  For any discrete mass function \(f \in \cM_1^{\disc}(\sqrt{B})\), we can decompose it as \(f = \sum_{i:\alpha_i > 0} \alpha_i g_i\) where \(\alpha_i \coloneqq \sum_{x\in B_i} f(x)\) and if \(\alpha_i>0\), we set \(g_i \coloneqq \frac{1}{\alpha_i} f \, \indic_{B_i}\), which is simply \(f\) conditioned on \(B_i\).
  This way, we may write
  \begin{equation}
  \label{decomp:Zmu}
    Z_N^{\beta,\w} (f) 
    = \sum_{i=1}^{K} \alpha_i \, Z_{N}^{\beta,\w}(g_i) \,.
  \end{equation}
  For \(\gamma\in(0,1)\), using the subadditive inequality \((\sum_{i} z_i)^\gamma \leq \sum_i z_i^{\gamma}\) for \(z_i \ge 0\), we obtain that
  \[
  \EE\bigl[Z_N^{\beta,\w} (f)^\gamma\bigr]
    \le \sum_{i=1}^{K} \alpha_i^{\gamma}\, \EE\bigl[Z_N^{\beta,\w} (g_i)^{\gamma}\bigr] 
    \le \sup_{g \in \cM_1^{\disc}(\sqrt{A})} \EE\bigl[Z_N^{\beta,\w} (g)^{\gamma}\bigr] \,\sum_{i=1}^{K} \alpha_i^{\gamma}   \,,
  \]
  using also translation invariance.
  Now, using H\"older's inequality, we can bound \(\sum_{i=1}^K \alpha_i^{\gamma} \leq K^{1-\gamma}\), so recalling that \(K\le 4 B/A\) this concludes the proof.
\end{proof}
Thanks to \Cref{lem:change-scale}, we deduce from \Cref{prop:key+} that for all \(\beta \in (0,\beta_0)\) and \(N \in\N\)
\[
 \sup_{f \in \cM_1^{\disc}\pigl(\e^{\frac{1}{2} N/\hat{N}_\beta} \sqrt{ N\,}\, \pigr)} \EE\Bigl[Z_{ N}^{\beta,\w}(f)^{1/2}\Bigr]
  \le 
  2  \,\biggl(\frac{ N}{\hat{N}_\beta} \, \e^{ N/\hat{N}_\beta}
    \biggr)^{1/2} 3 \, \e^{- N/\hat{N}_\beta}
  = 6 \, \biggl(\frac{ N}{\hat{N}_\beta} \biggr)^{1/2} \,
  \e^{-\frac{1}{2}  N/\hat{N}_\beta} \,.
\]
This completes the proof of \eqref{eq:large-scale}, since $6 \sqrt{x} \, \e^{-\frac{1}{2}x} \le 7 \, \e^{-\frac{1}{3}x}$ for $x\ge 0$. 
\end{proof}

\subsection{Proof of the lower bound in \texorpdfstring{\Cref{thm:quantitative}}{the main theorem}}
\label{sec:lower-bound}

We consider the following inequality, in the spirit of Paley--Zygmund: for any random variable $Z\ge 0$
\begin{equation}\label{eq:PTZ}
  \EE[Z \wedge 1] \geq \frac{\EE[Z]^2}{1+ \EE[Z^2]} = \frac{\EE[Z]^2}{1+\EE[Z]^2+ \Var[Z]} \,.
\end{equation} 
The proof is simple: starting from the identity $Z = (Z \wedge 1)(Z \vee 1)$, we get by Cauchy--Schwarz
\begin{equation*}
  \EE[Z]^2
  \leq
  \EE[(Z\wedge 1)^2] \, \EE[(Z \vee 1)^2]
  =
  \EE[Z^2\wedge 1] \, \EE[Z^2 \vee 1]
  \leq
  \EE[Z\wedge 1] \, \bigl(1+ \EE[Z^2]\bigr) \,.
\end{equation*}

To prove the lower bound in \eqref{eq:upper-main}, it suffices to apply \eqref{eq:PTZ} with $Z = Z_N^{\beta, \w}(f)$ and $f = \mathcal{U}^{\mathrm{disc}}_{\sqrt{N}}$, noting that $\EE[Z] = 1$ and plugging in the estimate \eqref{eq:ub2mom-unif} from \Cref{prop:second-moment-DP} (with \(\rho =1\)). 
Overall, we get that \(\EE[Z\wedge 1]\ge \bigl(2+c_3\e^{c_3 \th}\bigr)^{-1}\), which shows the lower bound in \eqref{eq:upper-main} with \(c_1 = c_3 + 2\).
\qed

\section{Proof of \texorpdfstring{\Cref{prop:key}}{the key proposition}}
\label{sec:keyprop}

We have shown in \Cref{sec:strategy} how to reduce \Cref{thm:quantitative} to the weaker key \Cref{prop:key}. 
This section is devoted to the proof of \Cref{prop:key}.
We outline the strategy and state general propositions, which are proved in the following subsections.

\subsection{Size bias}

We first introduce the key notion of \emph{size-biased measure}.

\begin{definition}[Size-biased measure]
  Denote by $(\w,\cF,\PP)$ the probability space on which the disorder $\w$ is defined, and recall the point-to-plane partition function $Z_N^{\beta,\w}(x)$ from \eqref{eq:polymeas}. 
  For $x\in\Z^2$, we define the \emph{size-biased measure} with starting point~\(x\) by \(\dd \tPP_x = Z_{N}^{\beta,\w}(x) \dd \PP\), that is,
  \begin{equation}
    \label{eq:size-biased}
    \tPP_x(A) = \tPP_{x,N}^{\beta} (A)
    \coloneqq \EE\bigl[ \indic_{A} \, Z_{N}^{\beta,\w}(x) \bigr] \qquad \text{for any } A\in \cF \,.
  \end{equation}
  More generally, the \emph{size-biased measure with initial condition \(f\)} (a mass function) is
  \begin{equation*}
    \tPP_{f} (\cdot) = \tPP_{f,N}^{\beta} (\cdot)\coloneqq \EE\bigl[  \indic_{(\cdot)} \,  Z_{N}^{\beta,\w}(f) \bigr] \,.
  \end{equation*}
\end{definition}

Let us note that there is a nice interpretation of the size-biased measure \(\tPP_{x}\).
Indeed, simply using Fubini's theorem and the definition \eqref{eq:polymeas}-\eqref{eq:Zf} of \(Z_{N}^{\beta,\w}(f)\) we get that 
\begin{equation}
  \label{def:tilted2}
  \tPP_{f}(A)  
  = \E_f\Bigl[ \EE\Bigl[  \e^{ \sum_{n=1}^N (\beta \w_{n,S_n} - \lambda(\beta))}\, \indic_A \Bigr] \Bigr]  
  = \E_f\bigl[ \tPP^{(S)}(A ) \bigr]
  = \E_{f} \bigl[  \tPP_{N}^{\beta,(S)}(A) \bigr] \,,
\end{equation}
where we have denoted \(\P_f = \sum_{x\in \Z^2} f(x) \P_x\) the law of the simple random walk with initial distribution \(f\), and \(\dd \tPP_{N}^{\beta,(s)} = \prod_{n =1}^N \e^{\beta \w_{n,s_n} - \lambda(\beta)} \dd \PP\) is the law of an environment \textit{tilted along the path~\(s\)}.
Hence the size-biased measure \(\tPP_f\) can be understood as a two-step procedure: first, draw a simple random walk path \(S\) with starting distribution \(f\); then tilt the environment (up to time~\(N\)) by \(\e^{\beta \w- \lambda(\beta)}\) along the path \(S\).
This interpretation may be useful for building intuition when comparing \(\PP\) and \(\tPP_f\), but in practice we will not use it in our proofs.

\begin{remark}[Size bias and total variation distance]
\label{rem:TVdistance}
  Fix any $Z \ge 0$ with $\EE[Z] = 1$ and consider the size-biased measure \( \dd \tilde\PP = Z \dd \PP\). 
  For any event \(A\in \cF\) we have $\EE[Z\wedge 1] \le \PP(A) + \tilde \PP(A^c)$ (just bound $Z\wedge 1 \le 1$ on $A$ and $Z\wedge 1 \le Z$ on $A^c$) and the inequality is sharp, since we can take \(A = \{Z\geq 1\}\) to get equality.
  It follows that
  \[
    \EE[Z\wedge 1] 
    = \inf_{ A\in \cF} \bigl\{ \PP(A) + \tilde \PP(A^c)\bigr\} 
    = 1 - d_{\mathrm{TV}} (\PP, \tilde \PP),
  \]
  where \(d_{\mathrm{TV}}(\mu,\nu) \coloneqq \sup_{A\in \cF}\abs{\mu(A) - \nu(A)}\) is the total variation distance between two probability measures \(\mu,\nu\).
  Therefore, \emph{showing that \(\EE[Z\wedge 1]\) is small corresponds to showing that \(d_{\mathrm{TV}} (\PP, \tilde \PP)\) is close to \(1\)}.
\end{remark}

\begin{remark}[Divergence under the size-biased probability]
  Since \(\EE[Z\wedge 1]=\tEE[\frac{1}{Z}\wedge 1]\), \Cref{thm:quantitative} can also be interpreted as a result on the divergence of \(Z_N^{\beta,\w}(f)\) under the size-biased measure~\(\tPP_f\), more precisely giving a rate at which \(1/Z_N^{\beta,\w}(f)\) tends to \(0\) under \(\tPP_f\).
\end{remark}

\begin{remark}[Anomalous path detection]
  In view of \Cref{rem:TVdistance}, showing that \(\EE[Z_N^{\beta,\w}(f) \wedge 1] \to 0\) amounts to a statistical problem of being able to find an appropriate event $A=A_N$ to discriminate between two environment distributions: \(\PP\) and \(\tPP_{f,N}^{\beta}\).
  Such ``anomalous path detection problems'' have been investigated (mostly in dimension \(1\)), see for instance ~\cite{ACCHZ08,CZ18}, or \cite{ABBDL10} for a discussion on similar hypothesis testing problems.
\end{remark}

\subsection{Strategy of the proof of \texorpdfstring{\Cref{prop:key}}{the key proposition}}

We need to bound \(\EE[Z_N^{\beta,\w}(f)\wedge 1]\) from above in the critical window, that is, for fixed \(\th \in [3,\infty)\), and for diffusive initial conditions \(f\), that is, supported in a ball of radius \(\sqrt{N}\). 

We combine a \emph{change-of-scale} argument, which reduces the initial diffusive scale $\sqrt{N}$ to a smaller scale $\sqrt{\tN}$, with a \emph{change-of-measure} argument.
The first step is the following result, proved in \Cref{sec:change}.

\begin{proposition}[Change of scale and measure]
\label{prop:change}
  For any $\beta > 0$, for any $N,\tN \in\N$ with $\tN \le N$, we can bound
  \begin{equation}
  \label{eq:almost-done-1}
    \sup_{f \in \cM_1^{\disc}(\sqrt{N})} \EE\bigl[Z_{N}^{\beta,\w}(f)\wedge 1\bigr] 
    \leq  8\, \sup_{f \in \cM_1^{\disc}(\sqrt{\tN})} \EE\biggl[Z_{N}^{\beta,\w}(f)\wedge \frac{N}{\tN}\biggr]  \,.
  \end{equation}
  Additionally, for any \(f \in \cM_1^{\disc}(\sqrt{\tN})\) and any event $A_N$ (that may depend on \(f\)), recalling \eqref{eq:size-biased} we have 
  \begin{equation}
  \label{eq:almost-done-2}
    \EE\biggl[Z_{N}^{\beta,\w}(f)\wedge \frac{N}{\tN}\biggr] 
    \leq \frac{N}{\tN} \PP(A_N) + \tPP_f(A_N^c) \,.
  \end{equation}
\end{proposition}
For \eqref{eq:almost-done-1}-\eqref{eq:almost-done-2} to be useful, we must find an event \(A_N=A_N(f)\) such that both $\PP(A_N)$ and $\tPP_f(A_N^c)$ are small, that is, \emph{$A_N(f)$ is atypical under $\PP$ but typical under the size-biased measure $\tPP_f$}, uniformly for $f \in \cM_1^{\disc}(\sqrt{\tN})$.
The following result states that such an event \(A_N\) may be found.

\begin{proposition}[Bounds for the event \(A_N\)]
\label{prop:key2}
  Fix \(1 \le \eta < \th < \infty\) with $\th -\eta \ge 1$.
  For $N\in\N$ set \(\tN=\lfloor\e^{-\eta}N\rfloor\), and consider $\beta = \beta_N(\th)$ in the critical regime \eqref{eq:critical-regime}, or equivalently \eqref{eq:thetasharp} (note that $\tPP_f$ depends on $\beta$, see \eqref{eq:size-biased}).
  Then, for any  $f \in \cM_1^{\disc}(\sqrt{\tN})$ we can find for each $N\in\N$ an event \(A_N =A_N(f) \in \mathcal{F}\) such that
  \begin{gather}
  \label{eq:PAN-bound1}
    \limsup_{N\to\infty} \; \sup_{f \in \cM_1^{\disc}(\sqrt{\tN})} \PP\bigl(A_N (f) \bigr)
    \le C_1 \, \frac{\th-\eta}{\eta}\e^{-(\th-\eta)} \,, \\
  \label{eq:PAN-bound2}
    \limsup_{N\to\infty}  \sup_{f \in \cM_1^{\disc}(\sqrt{\tN})}\tPP_f\bigl( A_N^c(f) \bigr)
    \leq \frac{C_2}{\eta} \,,
  \end{gather}
  where \(C_1,C_2>0\) are universal constants.
\end{proposition}

The reason why the bounds \eqref{eq:PAN-bound1} and \eqref{eq:PAN-bound2} have the specified dependence on \(\eta, \th\) will be clear below (they are determined by the mean and variance of a suitable random variable~$X$). 
For the moment, it suffices to note that plugging these bounds in \eqref{eq:almost-done-1}-\eqref{eq:almost-done-2} we obtain
\[
  \limsup_{N\to\infty} \sup_{f \in \cM_1^{\disc}(\sqrt{N})}  \EE\bigl[Z_{N}^{\beta,\w}(f)\wedge 1\bigr] 
  \leq 8 C_1  \frac{\th-\eta}{\eta} \e^{2\eta - \th} + \frac{8 C_2}{\eta}  \,.
  \]
Then, if we choose \(\eta = \th/3\), this concludes the proof of \Cref{prop:key}.

\smallskip

To prove \Cref{prop:key2}, we need to find \(A_N=A_N(f)\) which is atypical under $\PP$, but which becomes typical under the size-biased measure $\tPP_{f}$.
One could in principle take \(A_N(f) = \{Z_{N}^{\beta,\w}(f) > \eps\}\) for some \(\eps>0\) small enough: 
this would easily yield \(\tPP_f(A_N^c(f)) \leq \eps\) by definition of \(\tPP_f\), but the difficult part remains to show that \(\PP(A_N(f))= \PP(Z_{N}^{\beta,\w}(f) > \eps)\) is small, so this does not simplify the original problem.

A more manageable solution is to find a \emph{simpler random variable \(X=X(f)\) which acts as a proxy for \(Z_{N}^{\beta,\w}(f)\)}, for which we can estimate the mean and variance under \(\PP,\tPP_f\), uniformly in \(f \in \cM_1^{\disc}(\sqrt{\tN})\).
We clarify this strategy in the following lemma.
\begin{lemma}[Choice of the event \(A_N\)]
\label{lem:TChebyshev}
  Consider some random variable \(X(f) = X_N(f)\) such that
  \[
    \EE[X(f)] 
    = 0 \qquad \text{and} \qquad \tEE_f[X(f)] 
    > 0
  \]
  and define the event 
  \[
    A_N
    =A_N(f) 
    =\Bigl\{ X(f) \geq \frac12 \, \tEE_f[X(f)]\Bigr\} \,.
  \]
  Then, we get that 
  \[
    \PP\bigl(A_N(f)\bigr) 
    \leq 4 \; \frac{\Var[X(f)]}{ \tEE_f[X(f)]^2} \,,\qquad \tPP_f\bigl( A_N^c(f)\bigr) 
    \leq 4 \; \frac{\tVV_f[X(f)]}{ \tEE_f[X(f)]^2} \,.
  \]
\end{lemma}
The proof follows directly from Chebyshev's inequality, applied with threshold \(\tfrac12 \, \tEE_f[X(f)]\) to \(X(f)\) under \(\PP\) and to \(X(f)-\tEE_f[X(f)]\) under \(\tPP_f\).
It therefore remains to find a suitable \(X=X(f)\) such that \(\Var[X(f)], \tVV_f[X(f)]\ll \tEE_f[X(f)]^2\) uniformly for \(f \in \cM_1^{\disc}(\sqrt{\tN})\).

\smallskip

\emph{The choice of \(X\) is the most delicate point in the strategy and the main novelty of our proof}. 
We discuss this issue in \Cref{sec:proxy}, arriving at the explicit definition of \(X\) in \eqref{def0:X}, which may be described as the first (linear) term in a coarse-grained chaos expansion of the partition function over time intervals of length~$\tN$.
This provides a ``canonical'' recipe for building a proxy for the partition function in a general setting.

We finally state our main estimates on \(\Var[X(f)]\), \(\tEE_f[X(f)]\), \(\tVV_f[X(f)]\) that will be proved in the next sections.
The first two lemmas follow from second-moment calculations and are proven in \Cref{sec:technicalI}.
The last estimate is more difficult and will be proven in \Cref{sec:technicalII}.

\begin{lemma}[Variance bound]
\label{lem:nontilted}
  Assume the setting of \Cref{prop:key2}. 
  Defining \(X(f) = X_N(f)\) by \eqref{def0:X} below, we have \(\EE[X(f)]=0\) and
  \[
    \limsup_{N\to\infty}\ \sup_{f \in \cM_1^{\disc}(\sqrt{\tN})}\Var[X(f)] 
    \leq  C\; \frac{\eta}{\th-\eta}   \, \e^{\th -\eta}\,, 
  \]
where $C < \infty$ is a universal constant.
\end{lemma}

\begin{lemma}[Size-biased mean bound]
\label{lem:tiltedE}
  Assume the setting of \Cref{prop:key2}.
  Defining \(X(f) = X_N(f)\) by \eqref{def0:X} below, we have
  \[
    \liminf_{N\to\infty}\  \inf_{f \in \cM_1^{\disc}(\sqrt{\tN})} \tEE_{f}[X(f)] 
    \geq c\; \frac{\eta}{\th-\eta}\, \e^{\th -\eta} \,,
  \]
  where \(c>0\) is a universal constant.
\end{lemma}

\begin{proposition}[Size-biased variance bound]
\label{prop:tiltedVar}
  Assume the setting of \Cref{prop:key2}.
  Defining \(X(f) = X_N(f)\) by \eqref{def0:X} below, we have
  \[
    \limsup_{N\to\infty}\ \sup_{f \in \cM_1^{\disc}(\sqrt{\tN})} \tVV_f[X(f)] 
    \leq C'\,\eta \, \Bigl(\frac{1}{\th-\eta} \, \e^{\th -\eta} \Bigr)^2\,,
  \]
where \(C'>0\) is a universal constant.
\end{proposition}

Together with~\Cref{lem:TChebyshev}, these estimates readily show that
\[
\begin{split}
  \limsup_{N\to\infty} \sup_{f \in \cM_1^{\disc}(\sqrt{\tN})} \PP(A_N(f))
  & \leq C c^{-2} \; \frac{\th-\eta}{\eta} \; \e^{ - (\th -\eta)} \,, \\
  \text{ and }\quad \limsup_{N\to\infty}\sup_{f \in \cM_1^{\disc}(\sqrt{\tN})}\tPP_f(A_N^c(f)) 
  & \leq  C'c^{-2}\; \frac{1}{\eta} \,,
\end{split}
\]
which are the bounds announced in \Cref{prop:key2}.
(Of course, the reason why we stated the bounds in the precise form \eqref{eq:PAN-bound1} and \eqref{eq:PAN-bound2} was dictated by the mean and variance of~$X$.)

\begin{remark}[Size bias revisited]
  Another approach to bound \(\tPP_f(A_N^c(f))\) would be to use the size-biased representation~\eqref{def:tilted2}, introducing some well-chosen (random walk) event \(B \in \sigma\{S_n, n\leq N\}\) and then writing \(\tPP_f(A_N^c(f)) \leq \E_f[\tPP_N^{\beta,(S)}(A_N^c(f)) \indic_B] + \P_f(B^c)\).
  This is what is usually done in this setting, see for instance \cite[\S3]{BL17} or \cite[\S6.2]{JL25}.
  The advantage of this idea is that, once one has reduced to work on the event \(B\), it possibly makes it easier to control \(\tEE_N^{\beta,(S)}[X_N(f)]\) and \(\tVV_N^{\beta,(s)}\bigl[X_N(f)\bigr]\), and thus \(\tPP_N^{\beta,(S)}(A_N^c(f))\).
  We will not need such a strategy, since our choice for event \(A_N(f)\) will already make the computation of \(\tEE_f[X_N(f)]\), \(\tVV_f\bigl[X_N(f)\bigr]\) manageable.
\end{remark}

The remainder of this section is devoted to the choice of the proxy \(X\) (see \Cref{sec:proxy}) and to the proof of \Cref{prop:change} (see \Cref{sec:change}). 

\subsection{Choice of a good proxy for the partition function}
\label{sec:proxy}

We next discuss the \emph{choice of the proxy $X(f) = X_N(f)$} for the partition function \(Z_{N}^{\beta,\w}(f)\).
Let us introduce some useful notation.
For \(n,N\in\N\), \(x\in\Z^2\), we denote by $q_n(x)$ or $q(n,x)$ the simple random walk transition probability, that is
\[
  q_n(x)
  = q(n,x)
  \coloneqq\P(S_n=x) \,.
\]
We will also denote 
\begin{equation} 
\label{eq:qf}
  q_n^{(f)}(x) 
  = q^{(f)}(n,x)
  \coloneqq\P_{f}(S_n=x) 
  = \sum_{z\in \Z^2} f(z) q_n(x-z) \,.
\end{equation}

\subsection*{A first approach: chaos expansion and \texorpdfstring{\(L^2\)}{L2} projections.}
Let us define
\[
  \xi_{n,x} 
  = \xi_{n,x}^{(\beta)} 
  \coloneqq \e^{\beta \w(n,x)-\lambda(\beta)} -1  
\]
and notice that the \((\xi_{n,x})_{n\in \N, x\in\Z^2}\) are i.i.d.\ with \(\EE[\xi_{n,x}] =0 \) and \(\EE[\xi_{n,x}^2] = \e^{\lambda(2\beta) -2\lambda(\beta)} -1 \eqqcolon \sigma^2(\beta) \), see \eqref{def:sigmaRN}.
Rewriting the partition function  as \(Z_{N}^{\beta,\w}(f) = \E_f[\prod_{n=1}^N (1+\xi_{n,S_n}) ]\) and expanding the product, we obtain the following \emph{polynomial chaos expansion}
\begin{equation}
  \label{eq:chaosexpansion0}
  Z_{N}^{\beta,\w}(f) 
  = 1 + \sum_{k=1}^N \sum_{1\leq n_1 < \cdots < n_k \leq N} \sum_{x_0\in \Z^2}f(x_0) \sum_{x_1,\ldots, x_k \in \Z^2} \prod_{i=1}^{k} q_{n_i-n_{i-1}}(x_i-x_{i-1}) \prod_{i=1}^{k} \xi_{n_i,x_i} \,,
\end{equation}
where we set by convention \(n_0=0\).
Note that the terms \(\prod_{i=1}^k \xi_{n_i,x_i}\) in the above expansion are orthogonal in \(L^2\).
Therefore, one can reinterpret the above as the \(L^2\) decomposition of \(Z_N^{\beta,\w}(f)\) over the linear subspace of \(L^2\) generated by the orthogonal variables
\[
  \xi(A) 
  = \prod_{z\in A} \xi_{z} \qquad \text{ for every finite set } A \subset \N \times \Z^2  \,.
\]
Then, gathering the space-time points in a subset $A = \{(n_i, x_i) \colon 1 \le i \le k\} \subset \llb 1, N \rrb \times \Z^2$, the chaos expansion above can be rewritten more compactly as follows:
\begin{equation}
\label{eq:chaosexpansion}
  Z_{N}^{\beta,\w}(f) 
  = \sum_{A \subset\; \llb 1, N \rrb \times \Z^2} q^{(f)}(A) \, \xi(A) \,,
\end{equation}
where we have set \(q^{(f)}(A) \coloneqq \P_{f}\bigl(A\subseteq \{(i,S_i)\}_{i\geq 1}\bigr)\). 
If \(A=\varnothing\), then \(q^{(f)}(A) =1\) by convention. If \(A\) contains two points with the same time coordinate, then \(q^{(f)}(A) =0\). Otherwise, if \(A = \{(n_i,x_i)\colon 1\leq i \leq k\}\) with \(1\leq n_1 < \cdots < n_k \leq N\), we have
\begin{equation}
\label{eq:defqx}
  q^{(f)}(A) 
  = \sum_{x_0\in \Z^2} f(x_0) \prod_{i=1}^{k} q_{n_i-n_{i-1}}(x_i-x_{i-1})  \,,
\end{equation}
with \(n_0=0\). 
We also denote \(q(A) =q^{(\delta_0)}(A)\) for simplicity.

A simple choice for a proxy \(X_N\) for \(Z_{N}^{\beta,\w}=Z_{N}^{\beta,\w}(0)\) is to take the first term in the chaos expansion, namely 
\begin{equation}
\label{eq:linearized}
  \sum_{n=1}^N \sum_{x\in \Z^2} q_n(x) \xi_{n,x} \,.  
\end{equation}
This corresponds to the \(L^2\text{-projection}\) of \(Z_{N}^{\beta,\w}\) onto the linear subspace generated by the \((\xi_{n,x})\).
We refer for instance to~\cite[Section~6]{JL25} where the functional \eqref{eq:linearized} is used to show that the martingale critical point is equal to \(0\) as soon as \(R_N\to\infty\).
In fact, one needs a slightly finer strategy than simply use Chebyshev's inequality to bound \(\tilde \PP(A_N^c)\), but let us not dwell on details here.
The method can be pushed to show that \(Z_{N}^{\beta,\w} \to 0\) in probability as soon as \(\sigma^2(\beta) R_N \to\infty\).
(This is not optimal, since the point-to-plane partition function \(Z_{N}^{\beta,\w}\) is known to converge to~$0$ in probability as soon as \(\liminf \sigma^2(\beta) R_N \ge 1\), see \cite[Theorem~2.8]{CSZ17b}.)

In \cite{BL17} (and \cite{BL18} for the disordered pinning model), the authors consider a more involved functional, namely (a slightly modified version of) the \(k\)-th order term in the chaos expansion \eqref{eq:chaosexpansion} with \(f=\delta_0\), that is
\begin{equation}
\label{eq:orderk}
   \sum_{A \subset\; \llb 1, N \rrb \times \Z^2, \abs{A}=k} q(A) \, \xi(A) \,.
\end{equation}
They take \(k=k_N \to \infty\) slowly (in fact \(k_N =(\log\log N)^2\)) to show that \(Z_{N}^{\beta,\w} \to 0\) in probability as soon as \(\liminf \sigma^2(\beta) R_N >1\) (which is closer to the optimal result mentioned above). 
The result of~\cite{BL17} is in fact stronger:
the authors also prove a bound on the free energy.

\smallskip

One could take an even more faithful approximation of \(Z_{N}^{\beta,\w}(f)\) than \eqref{eq:orderk}.
A nearly optimal proxy would indeed be to keep in the chaos expansion~\eqref{eq:chaosexpansion0} all orders \(1\leq k\leq \log N\), namely 
\begin{equation}
\label{eq:almostallorders}
  \sum_{A \subset\; \llb 1, N \rrb \times \Z^2, 1\leq \abs{A}\leq \log N} q^{(f)}(A) \, \xi(A) \,,
\end{equation}
since it captures a positive proportion of the variance of \(Z_N^{\beta,\w}(f)\) at criticality, that is, when \(\sigma^2(\beta) R_N =1 + O(\frac{1}{\log N})\), see \eqref{eq:critical-regime}.
However, this would make the analysis extremely technical: the calculations in~\cite{BL17} are already difficult, so dealing with variance terms in~\eqref{eq:almostallorders} would quickly become prohibitively cumbersome.

\subsection*{A new approach: a coarse-grained version of the chaos expansion}

Our new idea is to introduce a \emph{coarse-graining on the intermediate scale $\tN = \e^{-\eta} N$} (recall \Cref{prop:key2}):
more precisely, to make expression \eqref{eq:almostallorders} more manageable, we only consider subsets \(A\) with \emph{time-width at most~$\tN$}, and we further restrict the starting time of~$A$ to be larger than~$\tN$ (to forget about the initial condition) and smaller than~$N-\tN$. 
This leads to our proxy for \(Z_N^{\beta,\w}(f)\):
\begin{equation}
  \label{def0:X}
\begin{gathered}
  X(f) \coloneqq \sum_{A \in \mathcal{I}} q^{(f)}(A) \, \xi(A)
  \qquad \text{with} \\
   \mathcal{I} \coloneqq \bigl\{ A \subset \llb 1, N \rrb \times \Z^2 \colon \
   1\leq \abs{A}\leq \log N , \
   \mathrm{width}(A) \leq \tN , \
   \mathrm{start}(A) \in \llb \tN + 1, N-\tN \rrb 
  \bigr\} 
\end{gathered}
\end{equation}
where for \(A = \{(n_1,x_1), \ldots, (n_k,x_k)\}\) with \(1\leq n_1 < \cdots <n_k\) we have defined the quantities \(\mathrm{start}(A) \coloneqq n_1\) and \(\mathrm{width}(A) \coloneqq n_k-n_1\).

To give some more insight, let us explain why $X(f)$ in \eqref{def0:X} roughly corresponds to \emph{the first (linear) term in a suitable coarse-grained version of the chaos expansion}. 
Assuming for simplicity that \(M \coloneqq N/\tN\) is an integer, we can write the Hamiltonian in \eqref{eq:polymeas} as a sum of terms  corresponding to time intervals of size $\tN$:
\begin{equation*}
    H_N^{\beta,\w}(S) = \sum_{j=1}^{M} \cH_{j}^{\beta,\w}(S) \qquad \text{ with } \quad
  \cH_{j}^{\beta,\w}(S) 
  = \sum_{n \in \llb (j-1)\tN +1, j \tN \rrb} \bigl(\beta \w(n,S_n) - \lambda(\beta)\bigr) \,,
\end{equation*}
so that we can write 
\[
Z_{N}^{\beta,\w}(f)  
=
\E_f\Biggl[\,\prod_{j=1}^{M} \e^{\cH_{j}^{\beta,\w}(S)}\Biggr] \,.
\]
Then, writing each term \(\e^{\cH_{j}^{\beta,\w}(S)} = 1 + (\e^{\cH_{j}^{\beta,\w}(S)}-1) \eqqcolon 1 + \Xi_j(S)\) and expanding the product, we obtain a \textit{coarse-grained version} of the chaos expansion:
\begin{equation*}
  Z_{N}^{\beta,\w}(f) = 1 +  \sum_{j=1}^{M} \E_f\bigl[\, \Xi_j(S)\bigr]  + \sum_{1\leq j_1 < j_2 \leq M} \E_f\bigl[\, \Xi_{j_1}(S) \, \Xi_{j_2}(S) \bigr] + \cdots  \,,
\end{equation*}
where we omit higher-order terms to lighten notation.
The inspiration for our proxy~$X(f)$ is \emph{the first (linear) term \(\sum_{j=1}^{M} \E_f\bigl[\Xi_j(S)\bigr]\)} in this coarse-grained expansion (which may be viewed as a generalization of the basic linear approximation~\eqref{eq:linearized} on the coarse-grained scale \(\tN\)). 
The actual choice~\eqref{def0:X} for our proxy corresponds to a slight modification of this idea, where we consider a ``sliding'' strip of width \(\tN\).
The restriction \(1\leq \abs{A}\leq \log N\) will be needed for technical reasons, namely to obtain a good control on \(\tVV_f[X(f)]\).

\subsection{Change of scale and measure: proof of \texorpdfstring{\Cref{prop:change}}{the proposition}}
\label{sec:change}

We start with a \emph{change of scale} for 
the starting point.
For this purpose, we show an analogue of \Cref{lem:change-scale}, except for the truncated mean \(\EE[Z_N^{\beta,\w}(f)\wedge 1]\) rather than the fractional moment \(\EE[Z_N^{\beta,\w}(f)^{\gamma}]\).

\begin{lemma}[Change of scale]
\label{lem:change-scale2}
  For any $A,B \in \N$ with \(A \leq B\), we have
  \begin{equation}
  \label{eq:change-scale-bis}
    \sup_{f \in \cM_1^{\disc}(\sqrt{B})} \EE\bigl[Z_{N}^{\beta,\w}(f)\wedge 1\bigr] 
    \le 8 \, \sup_{f \in \cM_1^{\disc}(\sqrt{A})} \EE\biggl[Z_{N}^{\beta,\w}(f)\wedge \frac{B}{A}\biggr] 
  \end{equation}
\end{lemma}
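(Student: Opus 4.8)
The plan is to decompose any diffusively-scaled initial condition $f \in \cM_1^{\disc}(\sqrt N)$ into pieces supported on smaller balls of radius $\sqrt{\tN}$, exactly as in the proof of \Cref{lem:change-scale}, and then exploit the truncation at level $N/\tN$ together with the subadditivity of $x \mapsto x \wedge c$. Concretely, cover the $L^\infty$-ball of radius $\sqrt N$ by $K \le 4 N/\tN$ balls $(B_i)_{1 \le i \le K}$ of radius $\sqrt{\tN}$, write $f = \sum_{i=1}^K \alpha_i g_i$ with $\alpha_i = \sum_z f(z) \indic_{B_i}(z)$ and $g_i = \alpha_i^{-1} f \indic_{B_i} \in \cM_1^{\disc}(\sqrt{\tN})$, so that $Z_N^{\beta,\w}(f) = \sum_{i=1}^K \alpha_i Z_N^{\beta,\w}(g_i)$. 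Since $\sum_i \alpha_i = 1$, the value $Z_N^{\beta,\w}(f)$ is a convex combination of the $Z_N^{\beta,\w}(g_i)$, so $Z_N^{\beta,\w}(f) \wedge 1 \le \sum_{i=1}^K \alpha_i \bigl( Z_N^{\beta,\w}(g_i) \wedge 1 \bigr)$ would give a bound by $\sup_{g} \EE[Z_N^{\beta,\w}(g)\wedge 1]$ with no gain — that is circular. The point of the $N/\tN$ truncation is different.

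The better route is to observe that $Z_N^{\beta,\w}(f) \wedge 1 \le \frac{\tN}{N}\bigl(Z_N^{\beta,\w}(f) \wedge \frac{N}{\tN}\bigr)$ is \emph{false} in general, so instead one should bound directly: on the event where $Z_N^{\beta,\w}(f) \le 1$ we simply use $Z_N^{\beta,\w}(f) \le \sum_i \alpha_i Z_N^{\beta,\w}(g_i)$ and keep each term truncated at $N/\tN$; on the complementary event we bound $Z_N^{\beta,\w}(f)\wedge 1 = 1$. More cleanly, I would argue as follows: since at least one index $i$ has $\alpha_i \ge 1/K \ge \tN/(4N)$, and since $Z_N^{\beta,\w}(f) \ge \alpha_i Z_N^{\beta,\w}(g_i)$ for every $i$, we get
\begin{equation*}
  Z_N^{\beta,\w}(f) \wedge 1
  \;\le\; \sum_{i=1}^K \frac{\alpha_i}{\sum_j \alpha_j}\,\Bigl(Z_N^{\beta,\w}(f)\wedge 1\Bigr)
  \;\le\; \sum_{i=1}^K \alpha_i \Bigl( \bigl(\tfrac{1}{\alpha_i} \wedge \tfrac{N}{\tN}\bigr) \wedge \tfrac{1}{\alpha_i} \Bigr)\wedge \cdots
\end{equation*}
— which is getting unwieldy. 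The clean statement to use is: for nonnegative $z_1,\dots,z_K$ and weights $\alpha_i \ge 0$ with $\sum \alpha_i = 1$, one has $\bigl(\sum_i \alpha_i z_i\bigr) \wedge 1 \le \sum_i \alpha_i (z_i \wedge \alpha_i^{-1}) \le \sum_i \alpha_i (z_i \wedge K) \le \sum_i \alpha_i (z_i \wedge \tfrac{4N}{\tN})$, where the first inequality holds because $\sum_i \alpha_i(z_i \wedge \alpha_i^{-1}) \ge \sum_i \alpha_i z_i$ if all $\alpha_i z_i \le 1$, and otherwise the left side is $\le 1 \le$ the contribution of any term with $\alpha_i z_i > 1$. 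Then take expectations, use translation invariance of the disorder to replace each $\EE[Z_N^{\beta,\w}(g_i) \wedge \tfrac{4N}{\tN}]$ by $\sup_{g \in \cM_1^{\disc}(\sqrt{\tN})} \EE[Z_N^{\beta,\w}(g) \wedge \tfrac{4N}{\tN}]$, and use $\sum_i \alpha_i = 1$; finally absorb the factor $4$ inside the truncation at the cost of the constant $2$ in front (since $z \wedge 4c \le 4(z \wedge c)$, or more precisely re-run the covering so that $K \le 2N/\tN$ suffices after noticing $\lceil a/b\rceil^2 \le (a/b+1)^2 \le 2(a/b)^2$ only for $a/b \ge 1+\sqrt2$, which forces either a slightly larger constant or a direct $z\wedge 4c \le 4(z\wedge c)$ estimate compensated by truncating at $N/\tN$ and paying~$4$, then noting $4 \cdot (\text{something} \le \tfrac12) $ — here I would just be slightly generous with the constant and land on the factor $2$ as stated, possibly after optimising the covering).

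The main obstacle is purely bookkeeping: getting the constant to be exactly $2$ and the truncation level to be exactly $N/\tN$ (rather than $4N/\tN$) requires care in the covering argument — one wants $K \le N/\tN$ balls, which is not quite true for the naive cube-covering, so one either accepts $K \le 4N/\tN$ and exploits $z \wedge \tfrac{4N}{\tN} \le 4 (z\wedge\tfrac{N}{\tN})$ together with the freedom that the target inequality only needs a universal constant, or refines the covering. I expect the authors simply use the elementary inequality $\bigl(\sum_i \alpha_i z_i\bigr)\wedge 1 \le \sum_i \alpha_i\,(z_i \wedge \alpha_i^{-1})$ combined with $\alpha_i^{-1} \le K$ and a favourable choice of cover, so the whole proof is three or four lines. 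No genuine probabilistic difficulty arises here; the content is entirely in this deterministic convexity-plus-truncation inequality and in translation invariance.
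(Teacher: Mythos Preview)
Your deterministic pointwise inequality is false, and this is a genuine gap. The first step $\bigl(\sum_i \alpha_i z_i\bigr)\wedge 1 \le \sum_i \alpha_i\,(z_i\wedge \alpha_i^{-1})$ is fine, but the next step $\sum_i \alpha_i\,(z_i\wedge \alpha_i^{-1}) \le \sum_i \alpha_i\,(z_i\wedge K)$ requires $\alpha_i^{-1}\le K$, i.e.\ $\alpha_i\ge 1/K$, for every~$i$. There is no reason this should hold: the initial condition $f$ may place arbitrarily little mass on some of the small balls~$B_i$. A concrete counterexample to the claimed chain: take $K=2$, $\alpha_1=0.01$, $\alpha_2=0.99$, $z_1=1000$, $z_2=0$; then the left side is $1$ while $\sum_i \alpha_i(z_i\wedge K)=0.02$.

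The paper does \emph{not} attempt a pointwise bound. Instead it proves the inequality in expectation via an event-splitting and Markov argument: set $A_i\coloneqq\{Z_i>K\}$ and $B\coloneqq\bigcup_i A_i$, bound $Z\wedge 1\le \indic_B + Z\,\indic_{B^c}$, and then use $\PP(A_i)\le K^{-1}\,\EE[Z_i\wedge K]$ together with $\EE[Z_i\,\indic_{A_i^c}]\le \EE[Z_i\wedge K]$ to arrive at $\EE[Z\wedge 1]\le 2\max_i \EE[Z_i\wedge K]$. The use of $\PP(A_i)$ (rather than a deterministic bound) is exactly what absorbs the small-$\alpha_i$ pieces. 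Your decomposition $f=\sum_i \alpha_i g_i$ and appeal to translation invariance are the right scaffolding; what is missing is this probabilistic estimate in place of the incorrect pointwise one.
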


The proof is a direct consequence of the following general lemma, together with the same decomposition as in the proof of \Cref{lem:change-scale}, see~\eqref{decomp:Zmu};
here one may take \(K \le 4 \frac{B}{A}\).

\begin{lemma}
  Let \((\alpha_i)_{1\leq i\leq K}\) be non-negative numbers with \(\sum_{i=1}^K \alpha_i = 1\), and let \((Z_i)_{1\leq i \leq K}\) be non-negative random variables.
  Then, if we set \(Z =\sum_{i=1}^K \alpha_i Z_i\), we have
  \[
    \EE[Z\wedge 1]\le 2 \max_{1\le i\le K} \EE\bigl[ Z_i\wedge K \bigr].
  \]
\end{lemma}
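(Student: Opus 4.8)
The statement to prove is: given non-negative weights $(\alpha_i)_{1\le i\le K}$ with $\sum_i \alpha_i=1$ and non-negative random variables $(Z_i)_{1\le i\le K}$, setting $Z=\sum_i \alpha_i Z_i$, one has $\EE[Z\wedge 1]\le 2\max_i\EE[Z_i\wedge K]$. The natural idea is to split the sum according to whether individual contributions are "small" or "large". Concretely, I would write $Z = \sum_i \alpha_i (Z_i\wedge K) + \sum_i \alpha_i (Z_i - K)^+$ and treat the two pieces separately, since $Z\wedge 1 \le (\text{first piece})\wedge 1 + (\text{second piece})$, using subadditivity of $x\mapsto x\wedge 1$ together with $x\wedge 1 \le x$.

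For the first piece, since $x\mapsto x\wedge 1$ is concave and $\sum_i\alpha_i = 1$, Jensen's inequality gives $\EE\bigl[(\sum_i \alpha_i (Z_i\wedge K))\wedge 1\bigr] \le \EE\bigl[\sum_i \alpha_i ((Z_i\wedge K)\wedge 1)\bigr] = \sum_i \alpha_i \EE[Z_i\wedge 1] \le \max_i \EE[Z_i\wedge K]$; actually even more simply $\sum_i \alpha_i (Z_i\wedge K)\wedge 1 \le \sum_i\alpha_i(Z_i\wedge K)$ and taking expectations gives $\sum_i \alpha_i\EE[Z_i\wedge K]\le\max_i\EE[Z_i\wedge K]$, so the concavity step is not even needed here. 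For the second piece, I would bound $(Z_i-K)^+ \le \frac{1}{K}Z_i^2 \cdot$ ... no — better: observe that $\alpha_i \le 1$ is not enough, so instead use $\sum_i \alpha_i (Z_i-K)^+ \le \sum_i \alpha_i Z_i \indic_{\{Z_i > K\}}$ and then crudely $Z_i\indic_{\{Z_i>K\}} \le Z_i \wedge (\text{something})$? That does not work directly either. The clean route: $(Z_i - K)^+ \le Z_i \indic_{\{Z_i > K\}}$, and since there are $K$ indices with weights summing to $1$, at least... Hmm. The correct and standard trick is: $\sum_i \alpha_i(Z_i-K)^+$; take expectations, $\EE\bigl[\sum_i\alpha_i(Z_i-K)^+\bigr]=\sum_i\alpha_i\EE[(Z_i-K)^+]$. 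Now $\EE[(Z_i-K)^+] \le \EE[Z_i] - \EE[Z_i\wedge K]$ is an identity, but $\EE[Z_i]$ need not be controlled. So instead I would bound $(Z_i-K)^+ \le \frac{1}{K-1}(Z_i\wedge K)(Z_i - K)^+/\dots$ — this is getting complicated, so let me state the actual clean argument: for the large part, use $\indic_{\{Z \text{ large}\}}$ only through the event $\{\exists i: Z_i > K\}$ is irrelevant; rather, bound $\sum_i \alpha_i(Z_i - K)^+ \le \max_i (Z_i - K)^+ \cdot$? No, $\sum\alpha_i = 1$ gives $\sum_i\alpha_i (Z_i-K)^+ \le \max_i (Z_i-K)^+$, and then $\EE[\max_i(Z_i-K)^+]\le \sum_i\EE[(Z_i-K)^+]$ — still unbounded. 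The genuinely correct bound: since $(Z_i - K)^+ > 0$ forces $Z_i \wedge K = K$, we get $(Z_i-K)^+ = (Z_i-K)\indic_{\{Z_i>K\}} \le Z_i\indic_{\{Z_i>K\}}$, and crucially on $\{Z_i > K\}$ we have $1 \le \frac{1}{K}(Z_i\wedge K)$, hence $Z_i\indic_{\{Z_i > K\}} \le Z_i$... still no. I will therefore instead reorganize: the right statement is $\EE[Z\wedge 1] \le \PP(\exists i:\, Z_i \ge 1/K \text{ weighted...})$. The actual working decomposition, which I expect the authors use, is $Z\wedge 1 \le \indic_{\{Z \ge 1\}} + Z\indic_{\{Z<1\}}$ combined with $\{Z\ge 1\}\subseteq \bigcup_i\{\alpha_i Z_i \ge 1/K\}\subseteq\bigcup_i\{Z_i\ge 1/K\}\subseteq\bigcup_i\{Z_i\wedge K \ge 1/K\}$ (the last since $1/K\le 1\le K$), so $\PP(Z\ge 1)\le\sum_i\PP(Z_i\wedge K\ge 1/K)\le K\sum_i\EE[Z_i\wedge K]\le K\cdot K\max_i\dots$ — too lossy.

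Given the factor $2$ and the truncation at $K$, the intended proof is almost certainly the following simple one, which I would write out: decompose $Z\wedge 1 = (Z\wedge 1)\indic_{\{Z\le 1\}} + (Z\wedge 1)\indic_{\{Z>1\}}$ and note $Z\wedge 1 \le Z = \sum_i\alpha_i Z_i$ on the first event, while $Z\wedge 1 = 1$ on the second; but on $\{Z>1\}$, writing $Z = \sum_i\alpha_iZ_i$ there must exist some index with $\alpha_iZ_i$ contributing, and since each $Z_i\wedge K \le Z_i$, we have $\sum_i\alpha_i(Z_i\wedge K)$ and $\sum_i\alpha_i Z_i\wedge 1 \le \sum_i\alpha_i(Z_i\wedge 1)$... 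I will use: $\EE[Z\wedge 1]\le\EE[\sum_i\alpha_i(Z_i\wedge 1)] + 0$ is false when some $Z_i$ is huge? No — $x\wedge 1$ concave, $\sum\alpha_i=1$ $\Rightarrow$ $(\sum_i\alpha_iZ_i)\wedge 1 \le \sum_i\alpha_i(Z_i\wedge 1)$ by Jensen, and $Z_i\wedge 1\le Z_i\wedge K$, giving directly $\EE[Z\wedge 1]\le\sum_i\alpha_i\EE[Z_i\wedge K]\le\max_i\EE[Z_i\wedge K]$, \emph{without the factor $2$ and without needing $K$ in the second slot}. So the factor $2$ and the specific truncation level $K$ must come from a different, sharper-when-combined-with-$\eqref{decomp:Zmu}$ consideration — but as a standalone inequality the Jensen argument already suffices (and is even stronger). \textbf{The main obstacle} is thus not difficulty but making sure I invoke the right form: I would simply present the one-line Jensen/concavity proof, $(\sum_i\alpha_i Z_i)\wedge 1 \le \sum_i\alpha_i(Z_i\wedge 1)\le\sum_i\alpha_i(Z_i\wedge K)$, take expectations, and bound the convex combination by the maximum, noting that this gives the claim with constant $1$ (hence a fortiori with $2$). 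If one instead wants to keep the statement exactly as phrased with the factor $2$, one can alternatively argue via $Z\wedge 1\le \indic_{\{\max_i Z_i\ge 1\}} + Z\wedge 1$ restricted to $\{\max_i Z_i<1\}$ where $Z\wedge 1 = Z = \sum\alpha_iZ_i = \sum\alpha_i(Z_i\wedge K)$, plus $\PP(\max_i Z_i\ge 1)\le\sum_i\PP(Z_i\ge 1)\le\sum_i\EE[Z_i\wedge 1]\le K\max_i\EE[Z_i\wedge K]$; but this is worse, so I would stick with the clean Jensen proof.
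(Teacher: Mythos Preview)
Your final ``one-line Jensen proof'' is wrong: the inequality goes the \emph{other} way for concave functions. Since $x\mapsto x\wedge 1$ is concave and $\sum_i\alpha_i=1$, Jensen gives
\[
  \Bigl(\sum_i\alpha_iZ_i\Bigr)\wedge 1 \;\ge\; \sum_i\alpha_i(Z_i\wedge 1),
\]
not $\le$. A concrete counterexample: take $K=2$, $\alpha_1=\alpha_2=\tfrac12$, $Z_1=2$, $Z_2=0$. Then the left side is $1\wedge 1=1$ while the right side is $\tfrac12\cdot 1+\tfrac12\cdot 0=\tfrac12$. So your claimed pointwise bound $(\sum_i\alpha_iZ_i)\wedge 1\le\sum_i\alpha_i(Z_i\wedge 1)$ fails, and the whole argument collapses. (You also invoke the same wrong direction earlier for the ``first piece''.) This is why the factor~$2$ and the truncation level~$K$ on the right are not just slack: they genuinely arise from the correct proof.

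The paper's argument is exactly the one you sketched and then discarded as ``too lossy'': set $A_i\coloneqq\{Z_i>K\}$ and $B\coloneqq\bigcup_i A_i$; bound $Z\wedge 1\le 1$ on $B$ and $Z\wedge 1\le Z$ on $B^c$ to get
\[
  \EE[Z\wedge 1]\le\PP(B)+\EE[Z\,\indic_{B^c}]\le\sum_{i=1}^K\PP(A_i)+\sum_{i=1}^K\alpha_i\,\EE[Z_i\,\indic_{A_i^c}].
\]
Now $\PP(A_i)\le\frac{1}{K}\,\EE[Z_i\wedge K]$ by Markov, and $\EE[Z_i\,\indic_{A_i^c}]\le\EE[Z_i\wedge K]$; summing gives at most $\max_i\EE[Z_i\wedge K]+\max_i\EE[Z_i\wedge K]=2\max_i\EE[Z_i\wedge K]$. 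The point you missed is that the union bound here is \emph{not} lossy: the factor $\frac{1}{K}$ from Markov exactly cancels the $K$ terms in the sum.
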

\begin{proof}
  Define \(A_i \coloneqq \{Z_i>K\}\) and let \(B=\bigcup_{i=1}^K A_i\).
  Then, bounding \(Z\wedge 1 \leq 1\) on the event \(B\) and \(Z\wedge 1 \leq Z\) on \(B^c\), we have that
  \[
    \EE[Z\wedge 1]
    \le \PP(B)+\EE[Z \, \indic_{B^c}]
    \le \sum_{i=1}^K \PP(A_i)+\sum_{i=1}^K \alpha_i \, \EE[Z_i \,\indic_{A_i^c}] \,,
  \]
  where we have used sub-additivity for the first term and  \(B^c = \bigcap_{i=1}^{K} A_i^c \subseteq A_i^c\) for all \(i\).
  Now, since \(A_i=\{Z_i>K\}\), we have that \(\PP(A_i)\le\oneover{K}\EE[Z_i\wedge K]\) by Markov's inequality and \(\EE[Z_i\, \indic_{A_i^c}]\le \EE[Z_i\wedge K]\) by definition of \(A_i^c\).
  Plugging this in the above gives that 
  \[
  \EE[Z\wedge 1] \leq \sum_{i=1}^K \oneover{K} \, \EE[Z_i\wedge K] + \sum_{i=1}^K \alpha_i \, \EE[Z_i\wedge K]\,,
  \]
  which concludes the proof.
\end{proof}

We next use a \emph{change of measure} argument to estimate the right-hand side of~\eqref{eq:change-scale-bis}.
We state it both for the truncated mean \(\EE[Z_N^{\beta,\w}(f)\wedge \frac{N}{\tN}]\) and for the fractional moment \(\EE[Z_N^{\beta,\w}(f)^{\gamma}]\), since the proof we have is simplified with respect to what we found in the literature.

\begin{lemma}[Change of measure]
  \label{lem:change-measure}
  Let \(Z\geq 0\) be a non-negative random variable.
  For any \(L>0\) and any event \(A \in \mathcal{F}\), we have
  \[
  \EE[Z\wedge L]\le L\,\PP(A)+\EE[Z\indic_{A^c}] \,.
  \]
  If additionally \(\EE[Z] =1\), then for any \(\gamma \in (0,1)\), we have, for any event \(A \in \mathcal{F}\)
  \[
  \EE[Z^{\gamma}] \leq \PP(A)^{1-\gamma} + \EE[Z\indic_{A^c}]^{\gamma} \,.
  \]
\end{lemma}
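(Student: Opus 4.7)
The plan is to treat the two inequalities separately, each via a simple split of the expectation into the contributions from $A$ and $A^c$, combined with an elementary one-line bound on each piece.

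For the first inequality, I would simply observe the pointwise bounds $Z\wedge K \le K$ on $A$ and $Z\wedge K \le Z$ on $A^c$. Integrating gives
\[
\EE[Z\wedge K] = \EE[(Z\wedge K)\indic_A] + \EE[(Z\wedge K)\indic_{A^c}] \le K\,\PP(A) + \EE[Z\,\indic_{A^c}] \,,
\]
which is exactly the claim. No probabilistic input beyond monotonicity of the expectation is needed.

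For the second inequality, I would again decompose $\EE[Z^\gamma] = \EE[Z^\gamma \indic_A] + \EE[Z^\gamma \indic_{A^c}]$ and then handle the two terms with two different classical tools. On the event $A$, I would apply H\"older's inequality with conjugate exponents $1/\gamma$ and $1/(1-\gamma)$ to get
\[
\EE[Z^\gamma \indic_A] \le \EE[Z]^{\gamma}\,\PP(A)^{1-\gamma} = \PP(A)^{1-\gamma} \,,
\]
using the normalization $\EE[Z]=1$. On $A^c$, since $\indic_{A^c}^\gamma = \indic_{A^c}$, I can write $\EE[Z^\gamma \indic_{A^c}] = \EE[(Z\indic_{A^c})^\gamma]$, and then invoke Jensen's inequality for the concave function $y \mapsto y^\gamma$ on $[0,\infty)$ to obtain $\EE[(Z\indic_{A^c})^\gamma] \le \EE[Z\indic_{A^c}]^\gamma$. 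Adding the two bounds yields the stated inequality.

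There is no real obstacle here: both statements are essentially one-line calculations once the correct split is made, and the only ``trick'' is choosing H\"older on $A$ (where $\indic_A$ carries the $\PP(A)^{1-\gamma}$ factor) versus Jensen on $A^c$ (where the integrand $Z\indic_{A^c}$ is the natural quantity to raise to the power $\gamma$). I would present both arguments together, in that order, as a short preliminary lemma before using it in the change-of-measure step that enters \Cref{prop:change}.
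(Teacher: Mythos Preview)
Your proof is correct and matches the paper's argument essentially line for line: the same $A/A^c$ split, the pointwise bounds $Z\wedge K\le K$ and $Z\wedge K\le Z$ for the first inequality, and H\"older on $A$ together with Jensen on $A^c$ for the second. There is nothing to add.
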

The above both simplifies and strengthens~\cite[Lem.~2.2]{JL24a}, which controls the moment of order~$1/2$; in fact, we simplify its proof and get a general fractional moment (note that~\cite[Lem.~3.2]{JL25} also controls a fractional moment, but in a non-optimal way).

\begin{proof}
  For the first inequality, we simply bound \(Z\wedge L \leq L\) on~\(A\) and \(Z\wedge L \leq Z\) on~\(A^c\): this gives the desired bound.

  For the fractional moment, we write \(\EE[Z^\gamma]  = \EE[Z^\gamma \indic_{A}]  + \EE[Z^\gamma \indic_{A^c}]\).
  For the first term, we use H\"older's inequality to get \(\EE[Z^\gamma \indic_{A}] \leq \EE[Z]^{\gamma} \PP(A)^{1-\gamma} = \PP(A)^{1-\gamma}\).
  For the second term, we use Jensen's inequality to get \(\EE[Z^\gamma \indic_{A^c}] \leq \EE[Z \indic_{A^c}]^{\gamma}\).
  This concludes the proof.
\end{proof}

\Cref{prop:change} is a simple combination of \Cref{lem:change-scale2} (for~\eqref{eq:almost-done-1}) with \(A=\tN\), \(B = N\) and of \Cref{lem:change-measure}  with \(L=\frac{N}{\tN}\), \(Z=Z_N^{\beta,\w}(f)\) (for \eqref{eq:almost-done-2}), recalling that \(\tPP_f(A_N^c) \coloneqq \EE[Z_N^{\beta,\w}(f)\indic_{A_N^c}]\).
\qed

\section{Second-moment estimates}
\label{sec:technicalI}

In this section, we prove \Cref{lem:nontilted} and \Cref{lem:tiltedE}, which rely mostly on second-moment estimates.
(\Cref{prop:tiltedVar} requires third-moment-type estimates;
we will prove it in \Cref{sec:technicalII}.)
We also prove \Cref{prop:second-moment-DP} in \Cref{sec:Variances}.

\emph{Throughout this section, 
we fix \(1 \le \eta < \th < \infty\) with $\th -\eta \ge 1$, for $N\in\N$ we set \(\tN=\lfloor\e^{-\eta}N\rfloor\), and we consider $\beta = \beta_N(\th)$ in the critical regime \eqref{eq:critical-regime}; equivalently, one may use the refined parametrization \eqref{eq:thetasharp}, which is asymptotically equivalent for fixed \(\th\).}

\subsection{Preliminary notation and variance estimate}

Let us rewrite the proxy \(X\) from \eqref{def0:X} in a form more convenient for calculations.
Introduce
\begin{equation}
\label{def:Im}
  \mathcal{I}_m 
  \coloneqq \bigl\{ A \subset \N\times \Z^2 \colon \mathrm{start}(A) =m \,, \ \mathrm{width}(A) \leq \tN \,,\ 1\leq \abs{A} \leq \log N  \bigr\} \,.
\end{equation}
We decompose~\(X(f)\) into contributions from strips of width \(\tN\) that start at time~\(m\):
\begin{equation}
\label{def:X}
  X(f)
  = \sum_{m=\tN+1}^{N-\tN} X_m(f) \qquad \text{ with }\qquad 
  X_m(f) \coloneqq \sum_{A\in\mathcal{I}_m } q^{(f)}(A) \, \xi(A) \,.
\end{equation}

\begin{remark}[Orthogonal projection]
  \label{rem:Xm}
  Denoting by \(\Pi_{\mathcal{I}_m}\) the orthogonal projection onto the linear subspace of \(L^2\) generated by the \(\xi(A)\) with \(A \in \mathcal{I}_m\), we can write \(X_m(f) = \Pi_{\mathcal{I}_m} Z_N^{\beta,\w}(f) \,.\)
\end{remark}

If \((m,y)\) denotes the first point in \(A\), we can write \(q^{(f)}(A) = q_m^{(f)}(y) \, q(A') \) with \(A' = A - (m,y)\) the set \(A\) translated by its first point (with this point being removed); see \eqref{eq:defqx}. 
This leads to the following decomposition of $X_m(f)$:
\begin{equation}
  \label{def:Xm}
  X_m(f) = \sum_{y\in \Z^2} q_m^{(f)}(y) \, \xi_{m,y}\, \hat Z_{\tN}^{\beta,\w} (m,y) \,,
\end{equation}
where, in view of \eqref{def:Im}, \(\hat Z_{\tN}^{\beta,\w}(m,y) \) is a partition function starting from \((m,y)\) with time-width at most \(\tN\) and restricted to chaos orders up to \(\log N -1\).
More precisely, denoting by \(\theta^{m,y} \w = (\w_{n+m,x+y})_{n\in \N,x\in \Z^2}\) the translated environment, we can write
\begin{equation}
  \label{def:hatZ}
\hat Z_{\tN}^{\beta,\w} (m,y) 
= \hat Z_{\tN}^{\beta,\theta^{m,y} \w}
\qquad \text{with} \qquad
\hat Z_{\tN}^{\beta,\w} 
\coloneqq \sum_{A \subset \llb 1,\tN \rrb \times \Z^2, \abs{A} \leq \log N -1} q(A) \xi(A) \,.
\end{equation}
We stress that \(\hat Z_{\tN}^{\beta,\w}(m,y)\) depends only on the variables \((\xi_{n,z})_{n \ge m+1, z\in \Z^2}\), hence it is independent of \(\xi_{m,y}\).

Looking back at \eqref{def:Xm}, it follows that the $X_m(f)$ are centered and pairwise uncorrelated:
\begin{equation}
  \label{eq:orthogonal}
  \Cov(X_m(f) \,, X_{m'}(f)) = \indic_{\{m'\}}(m) \, \Var\bigl[X_m(f)\bigr] \,.
\end{equation}
Similarly, recalling that $\EE[(\xi_{n,x})^2] = \sigma^2(\beta)$, we can write
\begin{equation}
  \label{eq:varXm}
  \Var\bigl[X_m(f)\bigr] 
  = \sum_{y\in \Z^2} q_m^{(f)}(y)^2 \, \sigma^2(\beta) \, \EE\bigl[\hat Z_{\tN}^{\beta,\w} (m,y)^2\bigr]
  = q_{2m}(f,f)
  \, \sigma^2(\beta) \, 
  \EE\bigl[(\hat Z_{\tN}^{\beta,\w})^2\bigr] \,,
\end{equation}
where we have used translation invariance and introduced the collision kernel
\begin{equation}
  \label{eq:collision}
q_{2m}(f,g) \coloneqq \sum_{x,x'\in \Z^2} f(x) q_{2m}(x-x') g(x') = \sum_{y\in \Z^2} q_m^{(f)}(y)\, q_m^{(g)}(y) \,, 
\end{equation}
the last identity following from Chapman--Kolmogorov.

Notice that, by orthogonality of the \(\xi(A)\) in the definition~\eqref{def:hatZ} of \(\hat Z_{\tN}^{\beta,\w}\), we have
\begin{equation}
\label{def:VN}
  \cV_{\tN} 
  = \cV_{\tN}(\beta,N)
  \coloneqq \EE\bigl[ \bigl(\hat Z_{\tN}^{\beta,\w} \bigr)^2\bigr]  
  = \sum_{k=0}^{\log N -1 } \sigma^2(\beta)^{k} \sum_{A\subseteq \llb 1,\tN \rrb\times\Z^2\,,\, \abs{A}=k} q(A)^2 \,.
\end{equation}

Then, we have the following estimate, whose proof is postponed to \Cref{sec:Variances} below.

\begin{lemma}
\label{lem:V} 
  Fix \(1 \le \eta < \th < \infty\) with $\th -\eta \ge 1$. For $N\in\N$ set \(\tN=\lfloor\e^{-\eta}N\rfloor\) and take $\beta = \beta_N(\th)$ in the critical regime \eqref{eq:critical-regime}, or equivalently \eqref{eq:thetasharp}. 
  For $N$ sufficiently large we have
  \begin{equation}
  \label{eq:Vbounds}
    \frac{c}{\th-\eta} \, \e^{\th-\eta}
    \le \sigma^2(\beta_N) \, \cV_{\tN}
    \le \frac{c'}{\th-\eta} \, \e^{\th-\eta} \,,
  \end{equation}
  where \(c,c'\in(0,\infty)\) are universal constants (we can take any $c < \frac{\pi}{4} \, \e^{-2\gamma}$ with \(\gamma\) the Euler--Mascheroni constant and any $c' > \pi$).
\end{lemma}

\begin{remark}
Even though we only need to apply \Cref{lem:V} when $\th$ is fixed,
an inspection of the proof shows that we could allow for $\th = \th_N \to \infty$, as long as $\th_N \ll \sqrt{\log N}$.
\end{remark}

\subsection{Variance and size-biased expectation: proofs of \texorpdfstring{\Cref{lem:nontilted,lem:tiltedE}}{}}

Recall the collision kernel \(q_{2m}(f,g) \coloneqq \sum_{x,x'} f(x) q_{2m}(x-x') g(x')\), and introduce the weighted Green function between time \(s\leq t\):
\begin{equation}
  \label{eq:weightedGreen}
  G_{s,t}(f,g) \coloneqq \sum_{m=s+1}^t q_{2m}(f,g) \,.
\end{equation}
Since the \(X_m(f)\) are centered and uncorrelated (see~\eqref{eq:orthogonal}), recalling the computation~\eqref{eq:varXm}, we therefore end up with the following expression for \(\Var[X(f)]\):
\begin{equation}
  \label{eq:VarXf}
  \Var[X(f)]
  = \sum_{m=\tN+1}^{N-\tN} \Var\bigl[X_{m}(f)\bigr] 
  = G_{\tN,N-\tN} (f,f) \, \sigma^2(\beta)\, \cV_{\tN} \,.
\end{equation}

As far as the size-biased expectation of \(X(f)\) is concerned, notice that by linearity we have that \(\tEE_f[X(f)] = \sum_{m=\tN+1}^{N-\tN} \tEE_f[X_{m}(f)]\).
Recalling the definition \eqref{eq:size-biased} of \(\tEE_f\) and the orthogonal projection \(\Pi_{\mathcal{I}_m}\) from \Cref{rem:Xm} (recall that \(X_m(f)= \Pi_{\mathcal{I}_m} Z_{N}^{\beta,\w}(f)\)), we can write
\begin{equation}
  \label{eq:tEXm}
  \begin{split}
  \tEE_f[X_m(f)] 
  = \EE[X_m(f)\, Z_{N}^{\beta,\w}(f)] 
  & = \EE[X_m(f)\, \Pi_{\mathcal{I}_m} Z_{N}^{\beta,\w}(f)] \\
  & = \EE\bigl[X_m(f)^2\bigr]
  =  \Var\bigl[X_m(f)\bigr]\,,
  \end{split}
\end{equation}
since \(X_m(f)\) is centered.
In particular, we get that 
\begin{equation}
  \label{eq:tEX}
  \tEE_f[X(f)] 
  = \sum_{m=\tN+1}^{N-\tN} \Var\bigl[X_{m}(f)\bigr] 
  = G_{\tN,N-\tN} (f,f) \, \sigma^2(\beta)\, \cV_{\tN}  
  =\Var[X(f)] \,.
\end{equation}

We also have the following lemma, which controls \(G_{\tN,N-\tN}(f,f)\), uniformly for \(f\in \cM_1^{\rm disc}(\sqrt{\tN})\).
\begin{lemma}
  \label{lem:Gfg}
  There are universal constants \(C,C'>0\) such that, for any \(1\leq s < t\)
  \[
   C \log \Bigl(\frac{t}{s}\Bigr) \leq \inf_{f \in \cM_1^{\rm disc}(\sqrt{s})}G_{s,t}(f,f) \leq \sup_{f \in \cM_1^{\rm disc}(\sqrt{s})} G_{s,t}(f,f) \leq C' \log \Bigl(\frac{t}{s}\Bigr) \,.
  \]
\end{lemma}

\begin{proof}
  The proof is an immediate consequence of the local central limit theorem together with a parity decomposition.
  More precisely, there are universal constants \(c,c'\) such that, for any \(m\in\N\) and \(z\in\Z^2\) \emph{even} (that is, with \(z_1 + z_2\) even) such that  \(\abs{z}\leq 2\sqrt{m}\), we have \(\frac{c}{m} \leq q_{2m}(z) \leq \frac{c'}{m}\).
In particular, for \(m > s\) we get that 
  \begin{equation}
    \label{eq:LLT-qff}
    \frac{c}{m} \leq \inf_{\substack{\abs{x},\abs{x'} \leq \sqrt{s} \\ x-x' \text{ even}}} \, q_{2m}(x-x') \leq \sup_{\substack{\abs{x},\abs{x'} \leq \sqrt{s} \\ x-x' \text{ even}}} \, q_{2m}(x-x') \leq \frac{c'}{m} \,,
  \end{equation}
  from which one deduces that \(\frac{c}{2m}\leq q_{2m}(f,f) \leq \frac{c'}{m}\) uniformly for \(f \in \cM_1^{\rm disc}(\sqrt{s})\), where the constant \(1/2\) in the lower bound comes from a parity consideration.
  Summing over \(m \in \llb s+1, t \rrb\) gives the desired conclusion.
\end{proof}

Combining the bounds in \Cref{lem:V,lem:Gfg}, the formulas~\eqref{eq:VarXf}-\eqref{eq:tEX} then yield that 
\[
c\,C\log \Bigl(\frac{N}{\tN}-1\Bigr) \, \frac{\e^{\th-\eta}}{\th-\eta}   
\leq 
\inf_{f \in \cM_1^{\disc}(\sqrt{\tN})}\tEE_f[X(f)] 
\leq 
\sup_{f \in \cM_1^{\disc}(\sqrt{\tN})}\Var[X(f)] 
\leq  
C'c' \, \log \Bigl(\frac{N}{\tN}\Bigr) \, \frac{\e^{\th-\eta}}{\th-\eta} \,,
\]
which concludes the proofs of \Cref{lem:nontilted,lem:tiltedE}.
\qed

\begin{remark}
  \label{rem:boundsgood}
  Let us stress that since we have \(\tEE_f[X(f)] = \Var[X(f)]\), going back to \Cref{lem:TChebyshev} we get
  \[
    \PP(A_N(f)) \leq \frac{4}{\Var[X(f)]} =\frac{4}{G_{\tN,N-\tN}(f,f) \,\sigma^2(\beta_N) \, \cV_{\tN}} \,.
  \]
  The role of \Cref{lem:V,lem:Gfg} is simply to make this bound more explicit, in fact, only lower bounds on~\(\sigma^2(\beta_N)\cV_{\tN}\) and \(\inf_{f \in \cM_1^{\rm disc}(\sqrt{\tN})}G_{\tN,N-\tN}(f,f)\) are actually needed here.
\end{remark}

\subsection{\texorpdfstring{Second moment estimates: proofs of \Cref{lem:V} and \Cref{prop:second-moment-DP}}{Second moment estimates: proofs}}
\label{sec:Variances}

Before we start the proofs, let us introduce some further notation and useful estimates.
We define
\begin{equation}
\label{def:u}
  u(n) 
  \coloneqq  \sum_{x\in \Z^2} q_n(x)^2 
  = q_{2n}(0) = \P(S_{2n}=0) \,,
\end{equation}
so that \(R_N = \sum_{n=1}^N u(n)\).
We recall that by \eqref{eq:RN}
\begin{equation}
\label{eq:Rbounds}
  \pi R_N = \log N + \alpha +o(1),\quad 
  \alpha\coloneqq \gamma+4\log 2-\pi\approx 0.208 \,.
\end{equation}
For \(I=\{i_1,\ldots,i_k\}\) with  \(i_1 <\cdots <i_k\), we also set 
\[
  u(I)\coloneqq \prod_{j=1}^k u(i_j-i_{j-1}) \,,
\]
with by convention \(i_0=0\), and \(u(\varnothing)=1\).
Let us also introduce, for \(m \geq 2\), i.i.d.\ random variables \(\Tm,\Tm_1,\Tm_2,\ldots\) taking values in \(\{1,\ldots,m\}\) with
\[
\mathrm{P}(\Tm\le j)=\frac{R_{j\wedge m}}{R_m}\,.
\]
In particular, we have \(\mathrm{P}(\Tm =j) =\frac{u(j)}{R_m}\indic_{\{1,\ldots,m\}}(j)\).
Therefore, we can write 
\begin{equation*}
  \sum_{I\subseteq \llb 1,m \rrb\,, \, \abs{I}=k} u(I) =  (R_m)^k\, \mathrm{P}\bigl( \tauk \leq m \bigr) \qquad \text{with}
  \quad \tauk \coloneqq \Tm_1 + \cdots+ \Tm_k \,.
\end{equation*}

\subsubsection{\texorpdfstring{Proof of \Cref{lem:V}}{Proof of}}

To simplify notation, we write \(\log N\) in place of \(\lfloor\log N\rfloor\) when integrality is needed.
Summing over the spatial coordinates in the definition~\eqref{def:VN} of~\(\cV_{\tN}\), we have that
\[
  \cV_{\tN} 
  = \sum_{k=0}^{\log N-1} \sigma^2(\beta)^k\sum_{I\subseteq \llb 1,\tN \rrb\,, \, \abs{I}=k} u(I) 
  = \sum_{k=0}^{\log N-1} \bigl(\sigma^2(\beta) R_{\tN}\bigr)^k  \mathrm{P}\bigl( \tau^{(\tN)}_k\leq \tN \bigr)\,.
\]
Bounding \(\mathrm{P}(\tau^{(\tN)}_k \leq \tN)\leq 1\) and \(\mathrm{P}(\tau^{(\tN)}_k \leq \tN) \geq \mathrm{P}(\tau_{\log N}^{(\tN)} \leq \tN)\) and summing the geometric series, we therefore get that 
\[
  \frac{(\sigma^2(\beta) R_{\tN})^{\log N}-1}{ \sigma^2(\beta) R_{\tN} -1} \, \mathrm{P}\bigl( \tau_{\log N}^{(\tN)} \leq \tN \bigr) 
  \leq \cV_{\tN} 
  \leq \frac{(\sigma^2(\beta) R_{\tN})^{\log N}-1}{ \sigma^2(\beta) R_{\tN} -1} \,.
\]
Notice that \(\log \tN = \log N - \eta \ge \frac{1}{2} \log N\) for $N$ large enough (in fact for $N \ge \e^{2\eta}$), so we can bound \(\log N \leq 2 \log \tN\) and then use \cite[Proposition~1.3]{CSZ19-3rd} to get that 
\[
  \mathrm{P}\bigl( \tau_{\log N}^{(\tN)} \leq \tN \bigr) 
  \geq \mathrm{P}\bigl( \tau_{2 \log \tN}^{(\tN)} \leq \tN \bigr) 
  \xrightarrow[\ N\to\infty \ ]{} \mathsf{p} > 0 \,,
\]
where \(\mathsf{p} = \P(Y_2 \le 1)\) is a universal constant associated with the Dickman subordinator \((Y_t)_{t\ge 0}\). 
A direct computation using the density of \(Y_2\) \cite[Theorem~1.1]{CSZ19-3rd} shows that \(\mathsf{p} = \frac{1}{2} \e^{-2\gamma} \approx 0.158  \).
Altogether, to prove our goal \eqref{eq:Vbounds}, we only need to get upper and lower bounds on \(\sigma^2(\beta) R_{\tN}-1\) and \((\sigma^2(\beta) R_{\tN})^{\log N}-1\) as $N\to\infty$, when we fix $\beta = \beta_N$ in the critical regime \eqref{eq:thetasharp}.

First of all, we can use~\eqref{eq:Rbounds} and the fact that \(\tN=\e^{-\eta} N\) to get that \( R_{\tN} - R_{N} = -\frac{\eta}{\pi} + o(1)\) as \(N\to\infty\).
Hence, for $\beta = \beta_N$ satisfying~\eqref{eq:thetasharp}, we have that 
\[
  \sigma^2(\beta_N) R_{\tN} -1 
  = \frac{R_{\tN}}{R_N - \frac{\th+o(1)}{\pi}}  -1 
  = \frac{\frac{\th +o(1)}{\pi} - \frac{\eta}{\pi} +o(1)}{R_N - \frac{\th+o(1)}{\pi}} 
  = (1+o(1))\, \frac{\frac{\th-\eta}{\pi}}{R_N } \,,
\]
where for the last identity we note that $o(1) = o(\th - \eta)$ since $\th - \eta \ge 1$.
In particular, using also that \( \sigma^2(\beta_N) R_N = 1+o(1)\) again by \eqref{eq:thetasharp}, we get that as \(N\to\infty\)
\[
  \frac{\sigma^2(\beta_N)}{\sigma^2(\beta_N) \, R_{\tN} -1} 
  = (1+o(1))\, \frac{\pi}{\th-\eta}\,.
\]

On the other hand, using again \( R_{\tN} = R_{N} -\frac{\eta}{\pi} + o(1)\) and \eqref{eq:thetasharp},
we get as $N\to\infty$
\begin{equation*}
  \sigma^2(\beta_N)\, R_{\tN}
  = \frac{R_{\tN}}{R_N - \frac{\th+o(1)}{\pi}} = \frac{1 - \frac{\eta}{\pi R_N} + \frac{o(1)}{R_N}}{1 - \frac{\th}{\pi R_N} + \frac{o(1)}{R_N}} 
  = \e^{ \frac{\th-\eta +o(1)}{\pi R_N}} \,.
\end{equation*}
Taking the \(\log N\) power and recalling that \(\pi R_N \sim \log N\) as \(N\to\infty\), we get that
\[
  (\sigma^2(\beta_N) \, R_{\tN})^{\log N}
  = (1+o(1)) \, \e^{\th-\eta} \,.
\]
Gathering the previous estimates, we therefore get that as \(N\to\infty\)
\begin{equation*}
  (1+o(1))\, \frac{\frac{\pi}{2} \, \e^{-2\gamma} }{\th-\eta} \, (\e^{\th-\eta}-1)
  \le \sigma^2(\beta_N) \, \cV_{\tN}
  \le (1+o(1))\, \frac{\pi}{\th-\eta} \, (\e^{\th-\eta}-1) \, ,
\end{equation*}
which concludes the proof of \Cref{lem:V}, using also that \(\frac12 \e^{\th-\eta}\leq \e^{\th-\eta}-1 \leq \e^{\th-\eta}\) for \(\th-\eta\geq 1\).
\qed

\subsubsection{\texorpdfstring{Proof of \Cref{prop:second-moment-DP}}{Proof of second moment upper bound}}

First of all, starting from the chaos expansion~\eqref{eq:chaosexpansion}, we can decompose over the starting point of non-empty subsets \(A\): we can write, in analogy with \eqref{def:X},
\[
  Z_N^{\beta,\w}(f) - \EE\bigl[Z_N^{\beta,\w}(f) \bigr] =    \sum_{x\in \Z^2} f(x) \sum_{m=1}^N  \sumtwo{ A \subset \llb m,N\rrb\times\Z^2}{\mathrm{start}(A)=m} q^{(x)}(A) \xi(A) \,.
\]
Notice that here we have subtracted the contribution of \(A=\varnothing\), that is, \(\EE[Z_N^{\beta,\w}(f)]\).
Now, note that if \((m,y)\) is the first point of~\(A\) we can again write \(q^{(x)}(A) = q_m(y-x) q(A')\), with \(A'=A-(m,y)\) the set \(A\) translated by its first point (with this point being removed).
Hence, decomposing over \((m,y)\) similarly as in~\eqref{def:Xm}, we get by orthogonality and translation invariance that 
\[
  \Var[Z_N^{\beta,\w}(f)] = \sum_{m=1}^N \sum_{y\in \Z^2} q_m^{(f)}(y)^2 \sigma^2(\beta)\EE\bigl[Z_{N-m}^{\beta,\w}(0)^2\bigr] \,.
\]
By Chapman--Kolmogorov, notice that \(\sum_{y\in \Z^2} q_m^{(f)}(y)^2 = q_{2m}(f,f)\), as in~\eqref{eq:collision}. 
Using that \(\EE\bigl[Z_{N-m}^{\beta,\w}(0)^2\bigr]\leq \EE\bigl[Z_{N}^{\beta,\w}(0)^2\bigr]\) and recalling the definition \eqref{eq:weightedGreen} of the weighted Green function (writing \(G_N(f,f)=G_{0,N}(f,f)\) for simplicity), we therefore end up with the following upper bound on the variance:
\begin{equation*}
  \Var[Z_N^{\beta,\w}(f)] \leq  G_N(f,f) \, \sigma^2(\beta)\, \EE\bigl[Z_{N}^{\beta,\w}(0)^2\bigr] \,.
\end{equation*}

\smallskip
Now, for \(f= \mathcal{U}^{\disc}_{\rho \sqrt{N}}\), we get that 
\[
G_N(\mathcal{U}^{\disc}_{\rho \sqrt{N}},\mathcal{U}^{\disc}_{\rho \sqrt{N}}) = \frac{1}{\abs{B(\rho \sqrt{N}) \cap \Z^2}^2} \sum_{m=1}^N \sum_{x,x' \in B(\rho \sqrt{N})\cap\Z^2} q_{2m}(x-x') \leq \frac{C}{\rho^2} \,,
\]
where we first used that \(\sum_{x\in \Z^2} q_{2m}(x-x') =1\) and then the fact that the cardinality of \(B(\rho \sqrt{N}) \cap \Z^2\) satisfies \(\abs{B(\rho \sqrt{N}) \cap \Z^2} \ge c (1\vee  \rho^2 N)\).

\smallskip

Altogether, we only need to get an upper bound on \(\sigma^2(\beta)\EE[Z_{N}^{\beta,\w}(0)^2]\), which is equal to
\begin{equation}
\label{eq:ZN02}
  \sigma^2(\beta)\sum_{k=0}^{N} \sigma^2(\beta)^k \sum_{I\subseteq \llb 1,N \rrb\,, \, \abs{I}=k} u(I) 
  = \sigma^2(\beta)\sum_{k=0}^{N} \bigl(\sigma^2(\beta) R_N\bigr)^k \mathrm{P}\bigl( \tau_k^{(N)} \leq N \bigr) \,.
\end{equation}
We now bound the probability appearing in the sum using Chernoff's bound: 
for any \(\lambda>0\),
\begin{equation}
\label{eq:Chernoff}
  \mathrm{P}\bigl( \tau_k^{(N)} \leq N \bigr) 
  \leq \e^{N\lambda } \, \mathrm{E}\bigl[ \exp\bigl( -\lambda T^{(N)}\bigr) \bigr]^{k} \,.
\end{equation}
To estimate the Laplace transform of \(T^{(N)}\), we use the following Tauberian theorem, from \cite[Thm.~3.9.1]{BGT89}.

\begin{lemma}
\label{lem:softthreshold}
  For a sequence \((u(n))_{n\in\N}\) of positive numbers, define the quantities
  \[
    R(m)\coloneqq \sum_{n=1}^m u(n)\qquad\text{and}\qquad \hat R(\lambda)\coloneqq \sum_{n=1}^{+\infty} \e^{-\lambda n} u(n) \,.
  \]
  If there exist constants \(a,b>0\) such that \(aR(m)= \log m + b + o(1)\) as \(m\to\infty\), then 
\begin{equation}\label{eq:Tauberian}
      a\hat R(\lambda) = \log\Bigl(\oneover{\lambda}\Bigr)+b-\gamma+\tilde o(1) \,,
\end{equation}
as \(\lambda\to 0\), where \(\gamma\) is the Euler--Mascheroni constant.
\end{lemma}

Let \(\eps>0\) be small and let us set \(\lambda\) such that 
\[
  \hat R(\lambda) \coloneqq \frac{1}{\sigma^2(\beta)} - \frac{\eps}{\pi} .
\] 
With this choice of \(\lambda\), we have 
\[
  \mathrm{E}\Bigl[ \exp\Bigl( -\lambda T^{(N)}\Bigr) \Bigr] 
  = \frac{1}{R_N} \sum_{n=1}^{N}\e^{- \lambda n} u(n) 
  \leq \frac{\hat R(\lambda)}{R_N} 
  = \frac{1 - \frac{\eps}{\pi} \sigma^2(\beta)}{\sigma^2(\beta) R_N}  \,.
\]
Using~\eqref{eq:Chernoff} and plugging this into~\eqref{eq:ZN02}, we get
\begin{equation}
\label{eq:sigmaboundvar}
  \sigma^2(\beta)\EE[Z_{N}^{\beta,\w}(0)^2] 
  \leq \e^{N\lambda} \sigma^2(\beta) \sum_{k=0}^{\infty} \Bigl(1 -\frac{\eps}{\pi} \sigma^2(\beta) \Bigr)^k 
  \leq \frac{\pi}{\eps}\, \e^{N\lambda}  \,.
\end{equation}

It remains to estimate $N\lambda$.
Note that the assumptions of \Cref{lem:softthreshold} are verified with $a=\pi$ and $b=\alpha$, see~\eqref{eq:Rbounds}.
By \eqref{eq:Tauberian}, we get \(\log \lambda = - \frac{\pi}{\sigma^2(\beta)} +\eps +\alpha - \gamma +o(1)\) as \(\beta\downarrow0\). 
Recalling also~\eqref{eq:etheta}, we get 
\begin{equation}
\label{eq:asymplambda}
  N \lambda 
  = \e^{\eps} \e^{\alpha -\gamma +o(1)} \,N \e^{- \frac{\pi}{\sigma^2(\beta)}}  
  \le \e^{2\eps} \, \e^{\th(N,\beta) -\gamma} 
  \qquad\text{as }\beta\downarrow 0\,, \ N\to\infty \,.
\end{equation}
Similarly, if we restrict $\beta \in (0,\beta_0)$, by \eqref{eq:Tauberian} we can bound \(\log \lambda \le - \frac{\pi}{\sigma^2(\beta)} +\eps +\alpha - \gamma +c\) for a suitable $c \in (0,\infty)$, therefore again by \eqref{eq:etheta} we get, for a suitable $C \in (0,\infty)$,
\begin{equation}
  \label{eq:boundlambda}
  N \lambda 
  \le \e^{\eps} \e^{\alpha -\gamma +c} \,N \e^{- \frac{\pi}{\sigma^2(\beta)}} 
  \le C \, \e^{\th(N,\beta)} 
  \qquad\text{for all } \beta \in (0,\beta_0), \ N\in \N \,.
\end{equation}

Plugging \eqref{eq:boundlambda} into \eqref{eq:sigmaboundvar}, we get the uniform bound \eqref{eq:ub2mom-unif}.
Similarly, plugging \eqref{eq:asymplambda} into \eqref{eq:sigmaboundvar}, we obtain
\[
  \sigma^2(\beta)\EE[Z_{N}^{\beta,\w}(0)^2]
  \le \frac{\pi}{\eps} \, \exp\bigl(\e^{2\eps} \, \e^{\th(N,\beta) -\gamma} \bigr)
  \le \exp\bigl(\e^{3\eps} \, \e^{\th(N,\beta) -\gamma} \bigr)  \,,
\]
where the second inequality holds eventually, if we assume that $\th(N,\beta) \to \infty$.
Since $\eps > 0$ is arbitrary, this completes the proof of \eqref{eq:ub2mom}, hence of \Cref{prop:second-moment-DP}.
\qed

\section{Control of the size-biased variance}
\label{sec:technicalII}

In this section, we control the size-biased variance, that is, we prove \Cref{prop:tiltedVar}.
\emph{Throughout this section,
we fix \(1 \le \eta < \th < \infty\) with $\th -\eta \ge 1$, for $N\in\N$ we set \(\tN=\lfloor\e^{-\eta}N\rfloor\) and we consider $\beta = \beta_N(\th)$ in the critical regime \eqref{eq:critical-regime}, or equivalently \eqref{eq:thetasharp}}.

First of all, recalling \eqref{def:X}, we write \(\tVV_f[X(f)]\) as the sum of size-biased covariances, which we split into two parts, called \textit{diagonal} and \textit{off-diagonal} terms:
\[
  \tVV_f[X(f)] 
  =  
  \sumtwo{m_1,m_2 =\tN+1}{\abs{m_1-m_2}\leq \tN}^{N-\tN} \tC_f\bigl[X_{m_1}(f),X_{m_2}(f)\bigr] + \sumtwo{m_1,m_2 =\tN+1}{\abs{m_1-m_2} > \tN}^{N-\tN} \tC_f\bigl[X_{m_1}(f),X_{m_2}(f)\bigr] \,.
\]
The proof reduces to proving the following estimates.

\begin{lemma}[Diagonal terms]
\label{lem:diag}
  There is a universal constant \(C>0\) such that
  \begin{equation}
  \label{eq:goal-diag}
\limsup_{N\to\infty}\sup_{f \in \cM_1^{\rm disc}(\sqrt{\tN})} \sumtwo{m_1,m_2 =\tN+1}{\abs{m_1-m_2}
\leq \tN}^{N-\tN} \tC_f\bigl[X_{m_1}(f),X_{m_2}(f)\bigr] 
\le C\, \Bigl( \frac{1}{\th-\eta} \e^{\th-\eta} \Bigr)^2 \eta\,.
\end{equation}
\end{lemma}

\begin{lemma}[Off-diagonal terms]
\label{lem:off-diag}
  There is a universal constant \(C>0\) such that, for any \(m_1,m_2 \in \llb \tN+1,N-\tN \rrb\) with \(m_2-m_1 > \tN\), we have
  \[
   \sup_{f \in \cM_1^{\rm disc}(\sqrt{\tN})} \tC_f\bigl[X_{m_1}(f),X_{m_2}(f)\bigr] \leq \frac{C}{(m_2)^2}\, \Bigl( \frac{1}{\th-\eta} \e^{\th-\eta} \Bigr)^2 \,.
  \]
\end{lemma}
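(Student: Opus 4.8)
The plan is to expand $\tC_x[X_{j_1},X_{j_2}]$ into simple random-walk quantities, using the chaos expansions of $X_{j_1},X_{j_2}$ (recall \eqref{def:Xj} and \eqref{eq:chaosexpansion}) together with the size-biased representation \eqref{def:tilted2}, and then to extract an extra factor $j_1/j_2$ from a cancellation between two heat kernels. First I would write $X_{j_1}X_{j_2}=\sum_{A_1,A_2}q(A_1)\,q(A_2)\,\xi(A_1)\xi(A_2)$, the sums running over $A_i\subset I_{j_i}\times\Z^2$ with $\abs{A_i}\le\log N$. Since the strips $I_{j_1},I_{j_2}$ are disjoint, $A_1$ and $A_2$ are disjoint, so $\xi(A_1)\xi(A_2)=\xi(A_1\cup A_2)$ and, in contrast with the diagonal terms in \Cref{lem:diag}, \emph{no third-moment corrections appear}. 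Using the identity $\tEE_x[\xi(A)]=\sigma^2(\beta)^{\abs{A}}\,\qx(A)$, which follows from \eqref{def:tilted2} exactly as in the proof of \Cref{prop:Variancebounds}, this gives
\[
  \tC_x[X_{j_1},X_{j_2}]=\sum_{A_1,A_2}q(A_1)\,q(A_2)\,\sigma^2(\beta)^{\abs{A_1}+\abs{A_2}}\Bigl[\P_x(A_1\cup A_2\subset S)-\P_x(A_1\subset S)\,\P_x(A_2\subset S)\Bigr]\,,
\]
where $S\coloneqq\{(i,S_i)\}_{i\ge1}$.

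Next I would apply the Markov property at the last time $a$ of $A_1$: denoting by $(a,y)$ the final point of $A_1$ and by $(b,z)$ the first point of $A_2$ (so $a\le j_1\tN<b$), one has $\P_x(A_1\cup A_2\subset S)=\P_x(A_1\subset S)\,q(b-a,z-y)\,\psi(A_2)$ and $\P_x(A_2\subset S)=q(b,z-x)\,\psi(A_2)$, where $\psi(A_2)$ is the product of the internal transition probabilities of $A_2$. Summing over $A_2$, using $q(A_2)=q(b,z)\,\psi(A_2)$, Chapman--Kolmogorov in the form $\sum_z q(b,z)q(b-a,z-y)=q_{2b-a}(y)$ and $\sum_z q(b,z)q(b,z-x)=q_{2b}(x)$, and recognising the sum over the tail of $A_2$ as the kernel $\cV_{j_2\tN-b}$ of \eqref{def:VN}, collapses the double sum to
\[
  \tC_x[X_{j_1},X_{j_2}]=\sigma^2(\beta)\sum_{A_1}q(A_1)\,\qx(A_1)\,\sigma^2(\beta)^{\abs{A_1}}\,F\bigl(a(A_1),y(A_1)\bigr)\,,\qquad F(a,y)\coloneqq\sum_{b\in I_{j_2}}\bigl[q_{2b-a}(y)-q_{2b}(x)\bigr]\cV_{j_2\tN-b}\,.
\]

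The heart of the argument is the estimate on $F$. Here I would invoke a local central limit estimate for the simple random walk (with the usual care for periodicity): expanding $q_{2b-a}(y)$ and $q_{2b}(x)$ around their values at the origin shows that, for every $b\in I_{j_2}$,
\[
  \bigl|q_{2b-a}(y)-q_{2b}(x)\bigr|\le C\,\frac{a+\abs{y}^2+\abs{x}^2}{(j_2\tN)^2}\,,
\]
the decisive point being that \emph{the spatial variables enter only at second order}; since the heat kernel is even there is no term of size $\abs{y-x}/(j_2\tN)^{3/2}$, which would be fatal after the $A_1$-sum. The hypotheses $j_1\ge2$ and $j_2-j_1\ge2$ guarantee $2b-a\ge j_2\tN$ (so all the relevant times are of order $j_2\tN$) and $a\le j_1\tN$. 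Summing over the $\tN$ values of $b$, bounding $\cV_{j_2\tN-b}\le\cV_{\tN}$ and using $\sigma^2(\beta)\cV_{\tN}\le\frac{c'}{\th-\eta}\e^{\th-\eta}$ from \Cref{lem:V}, and absorbing $\abs{x}^2\le\tN\le j_1\tN$, one obtains $\abs{F(a,y)}\le C\,\frac{j_1\tN+\abs{y}^2}{j_2^2\,\tN}\cdot\frac{1}{\sigma^2(\beta)}\cdot\frac{1}{\th-\eta}\e^{\th-\eta}$.

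Finally I would insert this bound into the $A_1$-sum. The part proportional to $j_1\tN$ reconstructs $\sigma^2(\beta)\sum_{A_1}q(A_1)\qx(A_1)\sigma^2(\beta)^{\abs{A_1}}=\tEE_x[X_{j_1}]$, which is at most $\frac{C}{j_1}\,\frac{1}{\th-\eta}\e^{\th-\eta}$ by \Cref{prop:Variancebounds}, \Cref{lem:V} and the bound $G_{\tN}(\mu_{j_1}^{(0)},\mu_{j_1}^{(x)})\le C/j_1$ established in the proof of \Cref{lem:tiltedE} (it is here that $j_1\ge2$ is used). The part proportional to $\abs{y}^2$ requires the \emph{diffusive second-moment estimate} $\sum_{A_1}q(A_1)\,\qx(A_1)\,\sigma^2(\beta)^{\abs{A_1}}\,\abs{y(A_1)}^2\le C\,j_1\tN\,\tEE_x[X_{j_1}]$, which I would prove by writing $y(A_1)-x$ as a sum of increments $\Delta_i$: the chaos weight factorises over the $\Delta_i$, each of which has zero mean by symmetry, so all cross-terms vanish and $\E\abs{y(A_1)-x}^2=\sum_i\E\abs{\Delta_i}^2\le C\sum_i(\text{time-span of }\Delta_i)=C\cdot(\text{time of the last point of }A_1)\le C\,j_1\tN$; the time-spans \emph{telescope}, so no factor $\abs{A_1}\le\log N$ is lost. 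Combining the two parts yields $\abs{\tC_x[X_{j_1},X_{j_2}]}\le\frac{C}{j_2^2}\bigl(\tfrac{1}{\th-\eta}\e^{\th-\eta}\bigr)^2$, uniformly over $\abs{x}\le\sqrt{\tN}$, as claimed. \textbf{Main obstacle.} The estimate on $F$: producing the gain $j_1/j_2$ from the cancellation $q_{2b-a}(y)-q_{2b}(x)$ requires parabolic regularity of the SRW heat kernel and, crucially, its evenness to annihilate the first-order spatial term, carried out uniformly in $b\in I_{j_2}$ and against the non-constant weights $\cV_{j_2\tN-b}$; the telescoping second-moment bound for $\abs{y(A_1)}^2$ is the other delicate point, since a crude bound there would cost an unwanted factor $\log N$.
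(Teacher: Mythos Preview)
Your argument follows the paper's route through the chaos expansion, the disjointness of $A_1,A_2$, the Markov decomposition at the last point of $A_1$ and first point of $A_2$, and the collapse of the $A_2$-tail into $\cV_{j_2\tN-b}\le\cV_{\tN}$. The divergence comes only at the estimate on $F(a,y)$. The paper does \emph{not} carry a $|y|^2$ term: instead of a full second-order LCLT expansion it simply writes
\[
  q_{2b-a}(y)-q_{2b}(x)\ \le\ \bigl[q_{2b-a}(0)-q_{2b}(0)\bigr]+\bigl[q_{2b}(0)-q_{2b}(x)\bigr]\,,
\]
using only that the random-walk kernel is maximised at the origin, $q_n(y)\le q_n(0)$. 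The first bracket is $\le C\,a/(j_2\tN)^2$ by the mean-value theorem applied to $u(\cdot)$, the second is $\le C|x|^2/(j_2\tN)^2\le C/(j_2^2\tN)$ by the local CLT; both are non-negative, so the replacement $\cV_{j_2\tN-b}\le\cV_{\tN}$ is legitimate termwise. This yields the bound $Cj_1/j_2^2$ with \emph{no} dependence on $y$, and the remaining $A_1$-sum is then exactly $\tEE_x[X_{j_1}]\le Cj_1^{-1}\,\sigma^2(\beta)\cV_{\tN}$, with no further work.

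Your telescoping second-moment bound on $|y(A_1)|^2$ is correct (minor caveat: under the weight $q(n_1,x_1)\,q(n_1,x_1-x)$ the first spatial point has mean $x/2$, not $x$, so centre there rather than at $x$; the conclusion $\E|y|^2\le Cj_1\tN$ is unaffected since $|x|^2\le\tN$). What your route buys is a two-sided bound on $|\tC_x|$ rather than just the upper bound the lemma asks for; what the paper's one-line inequality $q_n(y)\le q_n(0)$ buys is the complete elimination of your diffusive estimate, which is the part you flagged as delicate.
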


\noindent
As a consequence, uniformly in \(f \in \cM_1^{\rm disc}\bigl(\sqrt{\tN}\bigr)\), the off-diagonal term satisfies
\[
  \sumtwo{m_1,m_2 =\tN+1}{\abs{m_1-m_2} > \tN}^{N-\tN}\tC_f\bigl[X_{m_1}(f),X_{m_2}(f)\bigr] 
  \leq 2C   \Bigl( \frac{1}{\th-\eta} \e^{\th-\eta} \Bigr)^2 \!\!\sum_{\tN < m_1 <m_2 < N}\!\! \frac{1}{(m_2)^2}  
  \leq  2C\Bigl( \frac{1}{\th-\eta} \e^{\th-\eta} \Bigr)^2  \eta \,,
\]
where for the last inequality we have used that there are at most \(m_2\) terms in the sum over~\(m_1\) so that the sum is bounded by \(\sum_{m_2 =\tN+1}^{N-1} \frac{1}{m_2} \leq \log (\frac{N}{\tN}) = \eta\).
Combining these two results concludes the proof of \Cref{prop:tiltedVar}.
\qed

\smallskip

It therefore remains to prove \Cref{lem:diag} and \Cref{lem:off-diag}.
We first deal with the off-diagonal term, that is, \Cref{lem:off-diag}, since it is slightly less technical than the diagonal term.

\subsection{\texorpdfstring{Off-diagonal terms: proof of \Cref{lem:off-diag}}{Off-diagonal terms: proof of lemma}}

Let \(m_2>m_1 > \tN\) with \(m_2-m_1 > \tN\).
Recalling \eqref{def:X} and expanding the covariance, we obtain
\[
  \tC_f \bigl[X_{m_1}(f),X_{m_2}(f)\bigr]
  = \sum_{A_1 \in\mathcal{I}_{m_1}, A_2 \in \mathcal{I}_{m_2}} q^{(f)}(A_1)q^{(f)}(A_2)\,\tC_f\bigl[\xi(A_1),\xi(A_2)\bigr] \,.
\]
Now, since the sets \(A_1,A_2\) are disjoint (because the strips \(\llb m_1, m_1+\tN \rrb\) and \(\llb m_2, m_2+\tN \rrb\) are disjoint), we get that 
\begin{align*}
  \tC_f\bigl[\xi(A_1),\xi(A_2)\bigr] 
  & = \tEE_f[\xi(A_1\cup A_2)] - \tEE_f[\xi(A_1)] \tEE_f[\xi(A_2)] \\
  & = \sigma^2(\beta)^{\abs{A_1}+\abs{A_2}} \bigl(q^{(f)}(A_1\cup A_2)-q^{(f)}(A_1) q^{(f)}(A_2)\bigr)\,,
\end{align*}
using also that
\begin{equation} 
\label{eq:tiltedE}
  \tEE_f[\xi(A)] 
  = \EE[Z_N^{\beta,\w}(f) \xi(A)] 
  = \sigma^2(\beta)^{\abs{A}} q^{(f)}(A) \,,
\end{equation}
recalling~\eqref{eq:chaosexpansion}.
Altogether, we have that \(\tC_f [X_{m_1}(f),X_{m_2}(f)]\) is equal to
\begin{equation*}
 \sum_{A_1 \in\mathcal{I}_{m_1}, A_2 \in \mathcal{I}_{m_2}}  \sigma^2(\beta)^{\abs{A_1}+\abs{A_2}}  q^{(f)}(A_1)q^{(f)}(A_2)\bigl(q^{(f)}(A_1\cup A_2)-q^{(f)}(A_1)q^{(f)}(A_2)\bigr) \, \,.
\end{equation*}

Denoting \((\ell_1,z_1)\) the last point of \(A_1\) and \((m_2,y_2)\) the first point of \(A_2\), we can write
\begin{equation*}
  q^{(f)}(A_1\cup A_2)= q^{(f)}(A_1)q_{m_2-\ell_1}(y_2-z_1)q(A_2')  \quad \text{ and }\quad 
  q^{(f)}(A_2) = q_{m_2}^{(f)}(y_2) q(A_2') \,, 
\end{equation*}
with \(A_2' = A_2- (m_2,y_2)\) the set \(A_2\) translated by its first point (with this point being removed). 
Hence, we have
\begin{multline*}
  q^{(f)}(A_1)q^{(f)}(A_2)\bigl(q^{(f)}(A_1\cup A_2)-q^{(f)}(A_1)q^{(f)}(A_2)\bigr) \\
  = q^{(f)}(A_1)^2 q_{m_2}^{(f)}(y_2)  \bigl(q_{m_2-\ell_1}(y_2-z_1)-q_{m_2}^{(f)}(y_2)\bigr) q(A_2')^2 \,. 
\end{multline*}
Summing over \(A_2'\), since \(\sum_{A_2'} \sigma^2(\beta)^{\abs{A_2}} q(A_2')^2 = \sigma^2(\beta) \, \cV_{\tN}\) by definition of~\eqref{def:VN}, we obtain that \(\tC_f[X_{m_1}(f),X_{m_2}(f)]\) is equal to
\begin{equation}
\label{eq:Cov-1}
  \begin{split}
  \sum_{A_1 \in \mathcal{I}_{m_1}} \sigma^2(\beta)^{\abs{A_1}}  q^{(f)}(A_1)^2  \Biggl( \sum_{ y_2 \in \Z^2 } q_{m_2}^{(f)}(y_2)
   \bigl(q_{m_2-\ell_1}(y_2-z_1)-q_{m_2}^{(f)}(y_2)\bigr)  \Biggr) \, \sigma^2(\beta) \cV_{\tN}&  \\
     = \sum_{A_1 \in \mathcal{I}_{m_1}} \sigma^2(\beta)^{\abs{A_1}} q^{(f)}(A_1)^2 
   \, \pigl( q_{2m_2-\ell_1}^{(f)}(z_1)- q_{2m_2}(f,f) \pigr) \,
   \sigma^2(\beta) \cV_{\tN} & \,,
\end{split}
\end{equation}
where we used Chapman--Kolmogorov (see also~\eqref{eq:collision}) and we also recall that \( (\ell_1,z_1)\) is the last point of \(A_1\). 

Note that \(\ell_1 \leq m_1 +\tN < m_2\), by definition of~\(\mathcal{I}_m\).
We now show that, uniformly over \((\ell_1,z_1) \in \N\times \Z^2\) such that \(\tN \leq m_1\leq \ell_1 \leq m_1 +\tN\leq m_2\), uniformly for \(f\in \cM_1^{\rm disc}(\sqrt{\tN})\), we have
\begin{equation}
\label{eq:sumtv}
  q_{2m_2-\ell_1}^{(f)}(z_1)- q_{2m_2}(f,f)
   \leq C \, \frac{m_1}{(m_2)^2}\, .
\end{equation}
First of all, since $q_{2n}(x)$ is maximized at $x=0$, we have that 
\[
  q_{2n}^{(f)}(z) 
  = \sum_{x\in \Z^2} f(x) q_{2n}(z-x) \leq \sum_{x\in \Z^2} f(x) q_{2n}(0) = q_{2n}(0) \,.
\]
Hence, we simply estimate
\[
  q_{2m_2-\ell_1}^{(f)}(z_1)- q_{2m_2}(f,f) 
  \le \pigl( q_{2m_2-\ell_1}(0)- q_{2m_2}(0) \pigr) + \pigl( q_{2m_2}(0) - q_{2m_2}(f,f)\pigr) \,.
\]
where we assume for simplicity that $\ell_1$ is even (the odd case is similar).
We now control both terms separately.
\begin{itemize}
\item For the first term, since $q_{2n}(0) = \frac{1}{\pi}  \frac{1}{n} + O(\frac{1}{n^2})$ as $n\to\infty$ by the local CLT (see e.g.\ \cite[Thm.~2.1.1]{LL10}, in particular~(2.5)), for $\ell_1 \le m_2$ (so that $2 m_2 - \ell_1 \ge m_2$) we have
\[
  q_{2m_2-\ell_1}(0)- q_{2m_2}(0) 
  = \frac{1}{\pi} \biggl(\frac{1}{m_2-\ell_1/2} - \frac{1}{m_2}\biggr) + O\biggl(\frac{1}{(m_2)^2}\biggr) 
 \leq c \, \frac{\ell_1}{m_2^2} 
 \leq 2c \, \frac{m_1}{m_2^2}  \,,
\]
where the last inequality comes from the fact that \(\ell_1 \leq m_1+\tN \leq 2m_1\) since \(m_1\geq \tN\).

\item For the second term, recalling the definition~\eqref{eq:collision} of \(q_{2m_2}(f,f)\), we start by writing, for \(f\in \cM_1^{\rm disc}(\sqrt{\tN})\)
\[
  \begin{split}
     \abs[\big]{q_{2m_2}(0)- q_{2m_2}(f,f)}
     & \leq \sum_{\abs{x},\abs{x'} \leq \sqrt{\tN}} f(x)f(x') \abs[\big]{q_{2m_2}(0)- q_{2m_2}(x-x')} \\
     & \leq \sup_{\abs{x}\leq 2 \sqrt{\tN}} \abs[\big]{q_{2m_2}(0)- q_{2m_2}(x)} \,.
  \end{split}
\]
Then, again by the local CLT, we get that
\[
  \abs[\big]{q_{2m_2}(0)- q_{2m_2}(x)}
  \leq \frac{1}{2\pi m_2} \abs[\big]{1- \e^{- \abs{x}^2/m_2}} + O\biggl(\frac{1}{(m_2)^2}\biggr)  
  \leq c \frac{1 + \abs{x}^2}{(m_2)^2} 
  \leq 2c\frac{m_1}{(m_2)^2 } \,,
\]
uniformly for \(\abs{x}\leq 2 \sqrt{\tN} \leq 2 \sqrt{m_1}\).
This concludes the proof of~\eqref{eq:sumtv}.
\end{itemize}

Plugging~\eqref{eq:sumtv} back into~\eqref{eq:Cov-1}, we get that 
\[
  \tC_f\bigl[X_{m_1}(f),X_{m_2}(f)\bigr]
  \le C \, \frac{m_1}{(m_2)^2} \, \sigma^2(\beta) \cV_{\tN} \, \sum_{A_1 \in \mathcal{I}_{m_1}} \sigma^2(\beta)^{\abs{A_1}} q^{(f)}(A_1)^2 \,.
\]
Let us notice that the last sum is exactly \(\tEE_f[X_{m_1}(f)]\), recalling~\eqref{def:X} together with
\eqref{eq:tiltedE}.
Thus, plugging the expression~\eqref{eq:tEXm} for \(\tEE_f[X_m(f)]\) (recall~\eqref{eq:varXm}), we get that
\begin{equation}
\label{eq:boundCov}
  \tC_f\bigl[X_{m_1}(f),X_{m_2}(f)\bigr]
  \le C \, \frac{m_1}{(m_2)^2} \, q_{2m_1}(f,f)\, \bigl(\sigma^2(\beta) \cV_{\tN}\bigr)^2 
  \le C' \, \frac{1}{(m_2)^2} \, \bigl(\sigma^2(\beta) \cV_{\tN}\bigr)^2\,,
\end{equation}
where we have also used~\eqref{eq:LLT-qff} to get that \(q_{2m_1}(f,f)\leq \frac{c'}{m_1}\) uniformly in \(f\in \cM_1^{\rm disc}(\sqrt{\tN})\).
This concludes the proof of \Cref{lem:off-diag} thanks to \Cref{lem:V}.
\qed

\begin{remark}
  Echoing \Cref{rem:boundsgood}, notice that the bound~\eqref{eq:boundCov} is again very general and does not rely on the specific value of \(\sigma^2(\beta)\cV_{\tN}\). 
  In fact, combining~\eqref{eq:boundCov} with~\eqref{eq:tEX} and~\Cref{lem:Gfg}, the contribution of the off-diagonal part of \(\tVV_f[X(f)]\) in the bound for \(\tPP_f(A_N^c(f))\) given in \Cref{lem:TChebyshev} is
  \[
    \frac{1}{ \tEE_{f}[X(f)]^2} \sumtwo{m_1,m_2=\tN+1}{\abs{m_1-m_2} > \tN}^{N-\tN} \tC_f\bigl[X_{m_1}(f),X_{m_2}(f)\bigr]  
    \leq \frac{C}{(\log \frac{N}{\tN})^2} \sum_{ \tN \leq m_1 < m_2 
    \leq N} \frac{1}{(m_2)^2} \leq \frac{C'}{\log \frac{N}{\tN}} \,.
  \]
  In particular, this is small \textit{irrespective of the value of \(\cV_{\tN}\)} (uniformly in \(f\in \cM_1^{\rm disc}(\sqrt{\tN})\)), provided that \(N/\tN\) is large.
\end{remark}

\subsection{\texorpdfstring{Diagonal term: proof of \Cref{lem:diag}}{Diagonal term: proof of lemma}}

Let us introduce intervals of length $2\tN$:
\begin{equation*}
  I_j \coloneqq \llb (j-1)\tN + 1, (j+1)\tN  \rrb
  \qquad \text{for } j=2, 3, \ldots \,,
\end{equation*}
in order to bound the diagonal term as follows:
\begin{equation} 
\label{eq:into}
  \sumtwo{m_1,m_2 =\tN+1}{\abs{m_1-m_2}
  \leq \tN}^{N-\tN} \tC_f\bigl[X_{m_1}(f),X_{m_2}(f)\bigr] \leq \sum_{j=2}^{\frac{N}{\tN} -1}\Biggl\{ \sum_{m_1,m_2 \in I_j} \tEE_f\bigl[X_{m_1}(f) X_{m_2}(f)\bigr] \Biggr\} \,.
\end{equation}
We will focus on the terms in brackets and show that there exists a universal constant \(C>0\) such that, for $N$ large enough, the following bound holds:
\begin{equation} 
\label{eq:goal-diag-j}
  \forall j \ge 2 \colon \qquad
  \sum_{m_1,m_2 \in I_j} \tEE_f\bigl[X_{m_1}(f) X_{m_2}(f)\bigr] \leq \eps_{j,N}(f) + \frac{C}{j}\, \Bigl( \frac{1}{\th-\eta} \e^{\th-\eta} \Bigr)^2 \,,
\end{equation}
where the terms \(\eps_{j,N}(f)\) satisfy
\begin{equation} 
\label{eq:ojN}
  \limsup_{N\to\infty} \sup_{f\in \cM_1^{\rm disc}(\sqrt{\tN})}\sum_{j=2}^{\frac{N}{\tN}-1}\eps_{j,N}(f) = 0 \,.
\end{equation}
These relations, when plugged into \eqref{eq:into}, yield \eqref{eq:goal-diag}, thus completing the proof of \Cref{lem:diag}.

\smallskip

It remains to prove \eqref{eq:goal-diag-j} and \eqref{eq:ojN}. We recall that 
\[
  \tEE_f[X_{m_1}(f) \, X_{m_2}(f)] = \EE[X_{m_1}(f) \, X_{m_2}(f) \, Z_N^{\beta,\w}(f)] \,.
\]
Using the representation~\eqref{def:X} of \(X_m(f)\) and the decomposition~\eqref{eq:chaosexpansion} of \(Z_N^{\beta,\w}(f)\), we get that
\begin{equation}
  \label{eq:diagonal-j}
  \begin{split}
  &\sum_{m_1,m_2 \in I_j} \tEE_f\bigl[X_{m_1}(f) X_{m_2}(f)\bigr]
  \\
  & = \!\!\!
  \sum_{A_1,A_2 \in \!\! \bigcup\limits_{m \in I_j} \!\!\!\mathcal{I}_m} \,
  \sum_{A_3\subseteq \llb 1,N\rrb \times \Z^2}
  \!\! \sigma(\beta)^{\abs{A_1}+\abs{A_2}+\abs{A_3}}
  \, q^{(f)}(A_1)q^{(f)}(A_2)q^{(f)}(A_3)
  \, \EE\bigl[\eta(A_1)\eta(A_2)\eta(A_3)\bigr]\,,
  \end{split}
\end{equation}
where we used the notation \(\sigma(\beta)^p=(\sigma(\beta)^2)^{\frac{p}{2}}\) and \(\eta_z = \sigma(\beta)^{-1} \xi_z\), in such a way that they have zero mean and unit variance, for \(A\subseteq \N\times \Z^2\), we set \(\eta(A) = \prod_{z\in A} \eta_z\).

Since the $\eta_{n,x}$'s are centered and independent, for \(\EE[\eta(A_1)\eta(A_2)\eta(A_3)]\) to be non-zero \emph{each point in \(A_1\cup A_2 \cup A_3\) must belong to at least two sets among \(A_1,A_2,A_3\)}. 
This means that, with $A_1 \triangle A_2 \coloneqq (A_1 \setminus A_2) \cup (A_2 \setminus A_1)$ the symmetric difference of $A_1$ and $A_2$, we must have
\begin{equation*}
  A_1 \triangle A_2 \subseteq A_3 \subseteq A_1 \cup A_2 \,, \qquad \text{that is,} \quad
  A_3 = (A_1 \triangle A_2) \cup D \quad \text{for some } D \subseteq A_1 \cap A_2 \,.
\end{equation*}
The terms with $A_3 \supsetneq A_1 \triangle A_2$, that is, with
$D \ne \varnothing$, correspond to \emph{triple intersections} (points which belong to all three sets $A_1, A_2, A_3$).

Let $\eps_{j,N}(f)$ denote the contribution to \eqref{eq:diagonal-j} coming from triple intersections, that is, from the restriction $A_3 \supsetneq A_1 \triangle A_2$. 
Since $I_j \subseteq \llb \tN+1,N\rrb$ for $2 \le j \le \frac{N}{\tN}-1$, we can bound
\begin{equation} 
\label{eq:triple-no}
  \begin{split}
  & \sum_{j=2}^{\frac{N}{\tN}-1} 
   \eps_{j,N}(f) \\
  & \ \le C \!\!\!\! \sumtwo{A_1,A_2,A_3\subseteq \llb \tN+1,N\rrb\times\Z^2}{A_1, A_2 \ne \varnothing, \ A_3  \supsetneq A_1\triangle A_2} \!\!\!\!
  \sigma(\beta)^{\abs{A_1}+\abs{A_2}+\abs{A_3}} \,
  q^{(f)}(A_1)q^{(f)}(A_2)q^{(f)}(A_3)\,
  \abs[\big]{\EE\bigl[\eta(A_1)\eta(A_2)\eta(A_3)\bigr]}  \,,
  \end{split}
\end{equation}
where the constant \(C\) accounts for the overlap of the intervals \(I_j\).
We now prove that \eqref{eq:ojN} holds, that is \emph{triple intersections give a negligible contribution}. 
To this purpose, we exploit \cite[Proposition~4.3]{CSZ20} to prove the following.

\begin{lemma}[No triple intersections]
\label{lem:no-triple}
  Fix \(0 \le \eta < \th < \infty\). 
  For $N\in\N$, set \(\tN=\lfloor\e^{-\eta}N\rfloor\) and consider $\beta = \beta_N(\th)$ in the critical regime \eqref{eq:critical-regime}. 
  Then \eqref{eq:ojN} holds.
\end{lemma}

\begin{proof}
  We will compare \eqref{eq:triple-no} with the \emph{centered third moment} \(\EE[(Z_{N-\tN}^{\beta,\w}(\hat f) - \EE[Z_{N-\tN}^{\beta,\w}(\hat f)])^3]\) of a partition function with time horizon $N-\tN$ and with initial condition $\hat f$ given by
  \begin{equation} 
  \label{eq:effe}
    \hat f(z) = \hat f_{\tN}(z) \coloneqq \sup_{f\in \cM_1^{\rm \disc}(\sqrt{\tN})} q_{\tN}^{(f)}(z) \,.
  \end{equation}
  Indeed, recalling \eqref{eq:chaosexpansion}, we can upper bound \(\EE[(Z_{N-\tN}^{\beta,\w}(\hat f) - \EE[Z_{N-\tN}^{\beta,\w}(\hat f)])^3]\) by
  \begin{equation} 
  \label{eq:terzo}
    \!\!\!\!\sumtwo{A_1',A_2',A_3'\subseteq \llb 1,N-\tN \rrb\times\Z^2}{A_1', A_2', A_3' \ne \varnothing} \!\!\!\! \sigma(\beta)^{\abs{A_1'}+\abs{A_2'}+\abs{A_3'}} q^{(\hat f)}(A_1') \, q^{(\hat f)}(A_2')\, q^{(\hat f)}(A_3')\,\abs[\big]{\EE\bigl[\eta(A_1')\eta(A_2')\eta(A_3')\bigr]}  \,.
  \end{equation}
  We can cast \eqref{eq:triple-no} in this form applying Chapman-Kolmogorov at time $\tN$, that is, writing
  \begin{equation*}
    q^{(f)}(A_i) 
    = \sum_{z\in\Z^2} q^{(f)}_{\tN}(z) \, q^{(z)}(A_i')
    \leq q^{(\hat f)} (A_i')
    \qquad \text{with } A_i' \coloneqq A_i - (\tN,0) \,, \quad i=1,2,3 \,.
  \end{equation*}
  This means that the right hand side of \eqref{eq:triple-no} can be bounded, uniformly over \(f\in \cM_1^{\disc}(\sqrt{\tN})\), by \emph{\eqref{eq:terzo} restricted to the terms with triple intersections} $A_3 \supsetneq A_1 \triangle A_2$.

  The latter contribution was studied in \cite{CSZ19-3rd} (see eq. (4.4) and Proposition~4.3, where it was denoted by $M_{0,1-\e^{-\eta}}^{N,\mathrm{T}}(\phi,1)$) and shown to vanish as $N\to\infty$, under the assumption that \(\hat f(x) \le \frac{C}{N} \, \phi\bigl(\frac{x}{\sqrt{N}}\bigr) \) for some continuous and compactly supported function \(\varphi\colon \R^2 \to [0,\infty)\). 
  Our function \(\hat f\) from~\eqref{eq:effe} satisfies \(\hat f_{\tN}(x) \le \frac{C}{N} \, \phi\bigl(\frac{x}{\sqrt{N}}\bigr) \) with $\phi(x) = \e^{-c\abs{x}^2}$ (for some $c = c_\eta>0$, by the local CLT): note that $\phi$ is bounded, but \emph{not} compactly supported. 
  However, the relevant property which is actually used in the proof of \cite[Proposition~4.3]{CSZ19-3rd} is that $\sum_{x\in\Z^2} \frac{1}{N} \phi\bigl(\frac{x}{\sqrt{N}}\bigr)$ remains \emph{bounded} as $N\to\infty$ (see the proof of Lemma~7.1 and the first display on page~427 in \cite{CSZ19-3rd}).
  This is clearly satisfied for our choice of \(\phi\), which completes the proof.
\end{proof}

 We may therefore restrict attention to the contribution in \eqref{eq:diagonal-j} with \(A_3=A_1\triangle A_2\), that is, with no triple intersections.
Our goal is to \emph{bound it by the last term in \eqref{eq:goal-diag-j}}.
We split this contribution into the two cases \(A_1=A_2\) and \(A_1\neq A_2\).

Since subsets \(A_i \in \mathcal{I}_m\) have width up to \(\tN\), see \eqref{def:Im}, and since \(I_j\) are intervals of width \(2\tN\), it follows that the sum over \(A_i \in \bigcup_{m \in I_j} \mathcal{I}_m\) can be enlarged to \(A_i \subseteq \tilde I_j \times \Z^2\) for the interval \(\tilde I_j\) of width~\(3\tN\) given by
\begin{equation} 
\label{eq:Itj}
  \tilde I_j \coloneqq I_j \cup I_{j+1} = \llb (j-1)\tN+1, (j+2)\tN \rrb \,.
\end{equation}
Set \(R \coloneqq R_{3\tN}\) for brevity.

\smallskip

\noindent
Case \(A_1=A_2\).
Here \(A_3=\varnothing\), hence the contribution is
\[
  \sum_{k=1}^{\log N}
  \sigma^2(\beta)^k
  \sum_{\substack{A\subseteq \tilde I_j\times\Z^2\\ \abs{A}=k}}
  q^{(f)}(A)^2 \,.
\]
We define
\begin{equation}
\label{eq:Emme=}
  \cM^{(f)}_{I,=}(k)
  \coloneqq
  \sum_{\substack{A\subseteq I\times\Z^2\\ \abs{A}=k}}
  \frac{q^{(f)}(A)^2}{(R_{\abs{I}})^k} \,.
\end{equation}
Therefore, the contribution of the terms with \(A_1=A_2\) is bounded by
\[
  \sum_{k=1}^{\log N}
  \bigl(\sigma^2(\beta) R\bigr)^k \,
  \cM^{(f)}_{\tilde I_j,=}(k) \,.
\]

\smallskip

\noindent
Case \(A_1\neq A_2\).
Here \(A_3=A_1\triangle A_2\neq \varnothing\), and
\[
  \sigma(\beta)^{\abs{A_1}+\abs{A_2}+\abs{A_1\triangle A_2}}
  =
  \sigma(\beta)^{2\abs{A_1\cup A_2}}
  =
  \sigma^2(\beta)^{\abs{A_1\cup A_2}} \,.
\]
Hence, decomposing according to \(k=\abs{A_1}\) and \(k'=\abs{A_2}\), we can bound the contribution of the terms with \(A_1\neq A_2\) as follows:
\begin{multline}
\label{eq:weby}
  \sum_{k,k'=1}^{\log N}
  \sum_{\substack{A_1, A_2 \subseteq \tilde I_j\times\Z^2\\\abs{A_1}=k,\,\abs{A_2}=k'\\ A_1\neq A_2}}
  \sigma^2(\beta)^{\abs{A_1\cup A_2}} q^{(f)}(A_1) \, q^{(f)}(A_2) \, q^{(f)}(A_1\triangle A_2) \\
  \le \sum_{k,k'=1}^{\log N} \bigl(\sigma^2(\beta) R_{3\tN} \bigr)^{k+k'} \, \widetilde{\cM}^{(f)}_{\tilde I_j}(k,k') \,,
\end{multline}
where for an interval \(I\) we define the normalized quantity
\begin{equation}
\label{eq:Emme}
  \widetilde{\cM}^{(f)}_{I}(k,k') 
  \coloneqq \sum_{\substack{A_1, A_2 \subseteq  I\times\Z^2\\\abs{A_1}=k,\,\abs{A_2}=k'\\ A_1\neq A_2}} \frac{q^{(f)}(A_1) \, q^{(f)}(A_2) \, q^{(f)}(A_1\triangle A_2)}{(R_{\abs{I}})^{\abs{A_1\cup A_2}}} \,.
\end{equation}
To derive \eqref{eq:weby}, we used that \(R_{\abs{I}}\ge 1\), \(\abs{A_1\cup A_2}\le k+k'\), and therefore
\[
  \sigma^2(\beta)^{\abs{A_1\cup A_2}}
  \le
  \bigl(\sigma^2(\beta)R_{\abs{I}}\bigr)^{k+k'} \, \frac{1}{(R_{\abs{I}})^{\abs{A_1\cup A_2}}} \,.
\]

Recall that we take \(\beta = \beta_N(\th)\) in the critical regime \eqref{eq:critical-regime}. 
Applying \eqref{eq:RN}, since \(\tN = \e^{-\eta} N\), we have \(R_{3\tN}/R_N = 1 + (\log 3 - \eta + o(1))/\log N\) which yields, as \(N\to\infty\),
\begin{equation} 
\label{eq:sigma3}
  \sigma^2(\beta) \, R_{3\tN}
  = \biggl( 1+\frac{\th+o(1)}{\log N}\biggr)  \biggl(1 + \frac{\log 3 - \eta + o(1)}{\log N}\biggr) = 1 + \frac{\th - \eta + \log 3 + o(1)}{\log N} \,,
\end{equation}
which is larger than \(1\) for large \(N\), since \(\th -\eta\ge 1\) by assumption.
We now use the following claim, that we prove below.

\begin{claim}
\label{claim:boundm}
  There is a constant \(C>0\) such that, for \(\tN\) large enough, we have
  \begin{align}
  \label{eq:our-claim=}
    \forall j \ge 2 \colon \qquad \sup_{k\geq 1} \sup_f \cM^{(f)}_{\tilde I_j,=}(k)
    & \leq  \frac{C}{j \, R_{3\tN}} \,, \\
  \label{eq:our-claim}
    \forall j \ge 2 \colon \qquad \sup_{k,k'\geq 1} \sup_f \widetilde{\cM}^{(f)}_{\tilde I_j}(k,k') 
    & \leq  \frac{C}{j \, R_{3\tN}^2} \,,
  \end{align}
  where the second supremum ranges over all mass functions on \(\Z^2\).
\end{claim}

With~\Cref{claim:boundm} at hand, since \(\sigma^2(\beta) R_{3\tN} \le 1 + \frac{\th - \eta + 2}{\log N} \le \exp( \frac{\th - \eta + 2}{\log N} )\) for large~\(N\), see~\eqref{eq:sigma3}, we obtain that, uniformly in \(f\in \mathcal{M}_1\pigl(\sqrt{\tN}\pigr)\), the contribution of the terms with \(A_1=A_2\) is bounded by
\[
  \frac{C}{j R_{3\tN}}
  \sum_{k=1}^{\log N}\e^{k \frac{\th-\eta +2}{\log N}}
  \le \frac{C'}{j R_{3\tN}}
  \frac{\e^{\th-\eta + 2}}{\frac{\th-\eta+2}{\log N}}
  \le \frac{C''}{j}\,\frac{\e^{\th-\eta}}{\th-\eta} \,,
\]
where in the first inequality we summed the geometric series and in the second inequality we exploited \eqref{eq:RN}. Since \(\th-\eta\ge 1\), this is bounded by
\[
  \frac{C''}{j}\Bigl(\frac{\e^{\th-\eta}}{\th-\eta}\Bigr)^2 \,.
\]

Similarly, uniformly in \(f\in \mathcal{M}_1\pigl(\sqrt{\tN}\pigr)\), the right hand side of \eqref{eq:weby} is bounded by
\[
  \frac{C}{j R_{3\tN}^2}\Biggl(\sum_{k=1}^{\log N}\e^{k \frac{\th-\eta +2}{\log N}}\Biggr)^2
  \le \frac{C'}{j R_{3\tN}^2}
  \Biggl(\frac{\e^{\th-\eta + 2}}{\frac{\th-\eta+2}{\log N}} \Biggr)^2 
  \le \frac{C''}{j}\Bigl(\frac{\e^{\th-\eta}}{\th-\eta} \Bigr)^2 \,,
\]
where in the first inequality we used \eqref{eq:our-claim}, in the second one we summed the geometric series, and in the third one we exploited \eqref{eq:RN} and bounded \(\th-\eta+2\geq \th-\eta\).
This yields our goal~\eqref{eq:goal-diag-j} and concludes the proof of \Cref{lem:diag}.
\qed

\subsubsection{\texorpdfstring{Proof of~\Cref{claim:boundm}}{Proof of claim}}

The quantity \(\widetilde{\cM}^{(f)}_{I}(k,k')\) is closely related to the contribution of pairwise intersections to the third moment of the partition function, similarly to the proof of \Cref{lem:no-triple}. 
We cannot apply results from \cite{CSZ20} out of the box, because of the \emph{local constraints} given by the fixed values of~\(k, k'\), but we can still adapt (and simplify) the arguments in \cite{CSZ20} to our context.

Set \(R \coloneqq R_{3\tN}\) for brevity.

We start with \eqref{eq:our-claim=}, that is, the contribution with \(A_1=A_2\).
Restricting \eqref{eq:Emme=} to \(I=\tilde I_j\), we have
\[
  \cM^{(f)}_{\tilde I_j,=}(k)
  = \sum_{\substack{A\subset \tilde I_j\times\Z^2\\ \abs{A}=k}} \frac{q^{(f)}(A)^2}{R^k} \,.
\]
Listing the elements of \(A = \{(m_i, z_i)\}_{1 \le i \le k}\) by increasing time, we have
\begin{equation} 
\label{eq:special-case}
  q^{(f)}(A)^2
  = q^{(f)}(m_1,z_1)^2\prod_{i=2}^{k} q(m_i-m_{i-1},z_i-z_{i-1})^2 \,.
\end{equation}
Renaming \((a, x) = (m_1, z_1)\) and \((b, y) = (m_k, z_k)\) the first and last point of~\(A\), and summing over the inner points \((m_i, z_i)\) for \(2 \le i \le k-1\), the product in \eqref{eq:special-case} yields the space-time convolution \(Q^{*(k-1)}(b-a,y-x)\), where we define the probability mass function on \(\N \times \Z^2\)
\begin{equation} 
\label{eq:Q}
   Q(m,z) 
   \coloneqq \frac{q(m,z)^2}{R}  \indic_{\{1\leq m \leq 3\tN\}} \,.
  \end{equation}
(By convention, \(Q^{*0}(m,z) = \indic_{m=0} \indic_{z=0}\).)
Therefore,
\begin{equation} 
\label{eq:cont=}
  \cM^{(f)}_{\tilde I_j,=}(k)
  = \frac{1}{R}\sum_{\substack{a\le b\in \tilde I_j\\x,y\in \Z^2}} q^{(f)}(a,x)^2 \, Q^{*(k-1)}(b-a,y-x) \,.
\end{equation}
Note that \(\sum_{y \in \Z^2} Q^{*(k-1)}(b-a,y-x) =
K^{*(k-1)}(b-a)\) where
\begin{equation}
\label{eq:Kappa}
  K(m) = \sum_{z\in \Z^2} Q(m,z) 
  = \frac{u(m)}{R} \indic_{\{1\leq m \leq 3\tN\}} \,,
\end{equation}
recalling also \eqref{def:u}. (Again, by convention, \(K^{*0}(m) = \indic_{m=0}\).)
We now use the following basic estimate: there exists a constant \(\hat c>1\) such that 
\begin{equation} 
\label{eq:est-q}
  q(m,z)
  \le\sup_{y\in\Z^2}q(m,y)\leq \hat c\,u(m) \,,\quad \text{hence by \eqref{eq:qf} also} \quad q^{(f)}(m,z) 
  \le \hat c \, u(m) \,,
\end{equation}
which we apply to \emph{one instance} of \(q^{(f)}(a,x)\) in \eqref{eq:cont=}. 
Since \(\sum_{x \in \Z^2} q^{(f)}(a,x) = 1\), we obtain
\[
  \sup_{k\ge 1} \, \sup_{f} \, \cM^{(f)}_{\tilde I_j,=}(k)
  \le \sup_{k\geq 1} \sum_{a \le b \in \tilde I_j}\hat{c} \, \frac{u(a)}{R} \, K^{*(k-1)}(b-a) 
  \le \sum_{a \in \tilde I_j} \hat{c} \, \frac{u(a)}{R},
\]
where we used the fact that \(\sum_{m\in\N} K^{*(k-1)}(m) = 1\). 
When we consider \(I = \tilde I_j\) from \eqref{eq:Itj}, we can bound \(u(a) \le \frac{c}{a} \le \frac{c}{(j-1)\tN}\) by \eqref{def:u} and the local CLT. Since \(\abs{\tilde I_j} = 3\tN\), we have shown that 
\[
  \sup_{k\ge 1} \, \sup_{f} \, \cM^{(f)}_{\tilde I_j,=}(k)
  \le \frac{3 c\,\hat{c}}{(j-1) \, R} \,,
\]
which yields our goal \eqref{eq:our-claim=} for \(j\ge 2\).

\smallskip

We next consider the contribution \(\widetilde{\cM}^{(f)}_{I}(k,k')\) from \eqref{eq:Emme}, which is the case \(A_1\neq A_2\).
We perform a change of variables: setting $A_3 \coloneqq A_1 \triangle A_2$,
we define the \emph{disjoint subsets}
\[
  C_{12} \coloneqq A_1 \cap A_2\,, \quad 
  C_{23} \coloneqq A_2 \cap A_3 = A_2 \setminus A_1\,, \quad
  C_{13} \coloneqq A_1\cap A_3 = A_1 \setminus A_2 \,,
\]
so that we can write
\begin{equation} \label{eq:AC}
    q^{(f)}(A_1) \, q^{(f)}(A_2) \, q^{(f)}(A_1\triangle A_2)
    = 
    q^{(f)}(C_{12}\sqcup C_{13}) \,
q^{(f)}(C_{12}\sqcup C_{23}) \,
q^{(f)}(C_{13}\sqcup C_{23}) \,.
\end{equation}
We will derive an explicit expression for this product
according to ``interaction diagrams'',
see \Cref{fig:diagram} for a graphical illustration.
To each space-time point \((m,z) \in A_1 \cup A_2 = C_{13} \cup C_{23} \cup C_{12}\) 
we associate a label $\mathsf{d} = ij \in \{12,23,13\}$ indicating the set to which it belongs, that is, $(m,z) \in C_{ij}$. This partitions the set $C_{13} \cup C_{23} \cup C_{12}$ into \emph{stretches} of points with common label $ij$, describing the ``interaction'' between the random walk configurations $A_i$ and $A_j$
(since $C_{ij} = A_i \cap A_j$).

\begin{figure}[htbp]
  \centering
  \captionsetup{width=.9\linewidth}
  \includegraphics[scale=1.1]{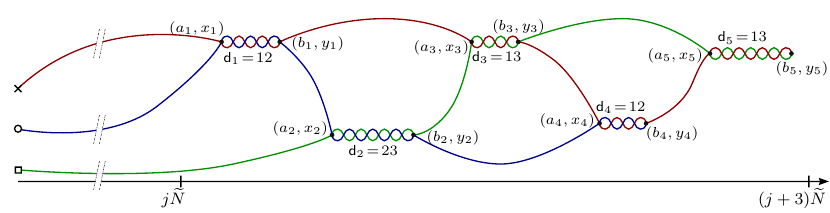}
  \caption{\footnotesize
  Illustration of an ``interaction diagram''. Pairwise interactions are grouped in stretches of space-time points belonging to the same set \(C_{ij}\), that is, with the same label \(\mathsf{d}=ij\).
  A labeled diagram corresponds to a collections of stretches, where each stretch has a label \(\mathsf{d}_p\), a size (cardinality)~\(k_p\) and ordered starting and ending points \((a_p,x_p)\), \((b_p,y_p)\).
  In the above diagram, there are \(\ell=5\) stretches.}
  \label{fig:diagram}
\end{figure}

To give a formal definition of the stretches, we order elements of \(C_{13}\cup C_{23}\cup C_{12}\) by increasing time, obtaining a list \((m_i,z_i)_{1\leq i \leq \abs{C_{13}\cup C_{23}\cup C_{12}}}\).
For the first stretch, we let \((a_1,x_1) = (m_1,z_1) = \min\{C_{13}\cup C_{23}\cup C_{12}\}\) be its first point, to which we associate label \(\mathsf{d}_1 \in \{12,23,13\}\) such that \((a_1,x_1) \in C_{\mathsf{d}_1}\), and we then add elements to the first stretch as long as they are in \(C_{\mathsf{d}_1}\).
The size of the first stretch is then \(k_1 = \sup\{k\colon (m_i,z_i) \in C_{\mathsf{d}_1}  \  \forall i\leq k\}\), and \((b_1,y_1) = (m_{k_1},z_{k_1})\) is its last point.
The first stretch is then \(\mathcal{S}_1 \coloneqq \{ (m_i,z_i), 1\leq i \leq k_1\}\).

We then proceed iteratively to define the subsequent stretches. 
If the stretches \(\mathcal{S}_1,\ldots, \mathcal{S}_{p-1}\) with respective sizes \(k_1,\ldots, k_{p-1}\) have been defined, the first element of the \(p\)-th stretch (if it exists) is then \((a_p,x_p) \coloneqq \min \{(C_{13}\cup C_{23}\cup C_{12}) \setminus (\mathcal{S}_1 \cup\cdots\cup \mathcal{S}_{p-1})\}\), which is in fact the element \((m_{k_1+\cdots+k_{p-1}+1},z_{k_1+\cdots+k_{p-1}+1})\).
The associated label is \(\mathsf{d}_p \in \{12,23,13\}\) such that \((a_p,x_p) \in C_{\mathsf{d}_p}\), and note that $\mathsf{d}_p \ne \mathsf{d}_{p-1}$. 
We define the size \(k_p\) and the last element \((b_p,y_p)\) of the stretch exactly as above.
The \(p\)-th stretch is then \(\mathcal{S}_p \coloneqq \{ (m_i,z_i)\colon k_1+\cdots+k_{p-1} +1 \leq i \leq k_1+\cdots+ k_p\}\).

\smallskip

We can now rewrite \eqref{eq:AC} as a product over stretches $\mathcal{S}_1, \mathcal{S}_2, \ldots, \mathcal{S}_\ell$, where \(\ell\geq 2\) the total number of stretches (note that there are at least two stretches, since we consider the contribution of $A_1 \ne A_2$). 
Recalling \eqref{eq:AC}, we can write (see again \Cref{fig:diagram} for an illustration) 
\[
\begin{split}
& q^{(f)}(A_1) \, q^{(f)}(A_2) \, 
q^{(f)}(A_1\triangle A_2) \\
 &\ = q^{(f)}(a_1,x_1)^2  
 \prod_{i=2}^{k_1} q(m_i-m_{i-1},z_i-z_{i-1})^2 \\
  &\ \quad \cdot q^{(f)}(a_2,x_2) \, q(a_2-b_1,x_2-y_1) \prod_{i=k_1+2}^{k_1+k_2} q(m_i-m_{i-1},z_i-z_{i-1})^2  \\
 &\ \quad  \cdot \prod_{p=3}^{\ell} q(a_p-b_{p-2},x_p-y_{p-2}) \, q(a_p-b_{p-1},x_{p}-y_{p-1})
 \!\! \prod_{i=k_1+ \cdots+k_{p-1}+2}^{k_1+ \cdots+k_p}
 \!\! q(m_i-m_{i-1},z_i-z_{i-1})^2 \,.
\end{split}
\]
We plug this expression into \eqref{eq:Emme} restricted to $A_1 \ne A_2$, which defines \(\cM^{(f)}_{I,\ne}(k,k')\). 

We now follow similar steps as for the case $A_1 = A_2$, see \eqref{eq:special-case} and the following lines. First we perform a partial sum inside each stretch: if we fix the starting and ending points $(a_p, x_p)$, $(b_p, y_p)$ of the $p$-th stretch, recalling the mass function $Q$ defined in \eqref{eq:Q}, the sum over internal space-time points $(m_i, z_i)$ for $k_1 + \ldots + k_{p-1} +1 \le i \le k_1 + \ldots + k_p - 1$ yields the space-time convolution
\(Q^{*(k_p-1)}(b_p-a_p,y_p-x_p)\) (recall the definition~\eqref{eq:Q}).
Altogether, we can rewrite \eqref{eq:Emme} as a sum over diagrams, encoded by the number \(\ell\ge 2\) of stretches and by the label $\mathsf{d}_p$, size $k_p$, starting and ending points $(a_p, x_p)$, $(b_p, y_p)$ of each stretch. The conditions $\abs{A_1} = \abs{C_{12}}+\abs{C_{13}}=k$, $\abs{A_2} = \abs{C_{12}}+\abs{C_{23}}=k'$ can be expressed as a constraint on the sizes $k_1, \ldots, k_\ell$ depending on the labels:
\begin{equation} \label{eq:constraint}
  \mathcal{C}_{k,k'}(\mathsf{d}_1,\ldots, \mathsf{d}_{\ell}) \coloneqq
  \Biggl\{ k_1 \ge 1 \,, \ldots \,, k_\ell \ge 1 \colon 
  \sumtwo{p = 1,\ldots, \ell:}{\mathsf{d}_p 
  \in \{12,13\}} \!\! k_p  = k \,, 
  \sumtwo{p = 1,\ldots, \ell:}{\mathsf{d}_p 
  \in \{12,23\}} \!\! k_p = k' \Biggr\} \,.
\end{equation}
Note that
\[
  \abs{A_1\cup A_2}
  = \abs{C_{13}}+\abs{C_{23}}+\abs{C_{12}}
  = \sum_{p=1}^{\ell} k_p \,.
\]
Hence the denominator in \eqref{eq:Emme} is exactly
\[
  \frac{1}{R^{\abs{A_1\cup A_2}}}
  = \frac{1}{R^{\sum_{p=1}^{\ell} k_p}} \,.
\]
Therefore, the contribution \(\widetilde{\cM}^{(f)}_{I}(k,k')\) is bounded above by
\begin{equation*}
\begin{split}
  \cM^{(f)}_{I,\ne}(k,k')
   \, \le & \,\sum_{\ell=2}^{\infty} \
  \sumtwo{\mathsf{d}_1,\ldots, \mathsf{d}_{\ell} \in \{12,23,13\}}{\mathsf{d}_p \ne \mathsf{d}_{p-1} \, \forall p=2,\ldots,\ell} \
   \sum_{k_1,\ldots,k_{\ell} \, \in \, \mathcal{C}_{k,k'}(\mathsf{d}_1,\ldots, \mathsf{d}_{\ell})} \
  \sum_{\substack{a_1\le b_1<\cdots<a_\ell\le b_\ell\in I\\x_1,y_1,\cdots,x_\ell,y_\ell\in \Z^2}}
  \\
  & \quad  \phantom{\cdot}
  \frac{q^{(f)}(a_1,x_1)^2}{R}\, Q^{*(k_1-1)}(b_1-a_1,y_1-x_1) 
  \\
  & \quad  
  \cdot \frac{q^{(f)}(a_2,x_2) \, q(a_2-b_1,x_2-y_1)}{R}\, Q^{*(k_2-1)}(b_2-a_2,y_2-x_2)
  \\
  & \quad \cdot\prod_{p=3}^\ell\frac{q(a_p-b_{p-2},x_p-y_{p-2}) \, q(a_p-b_{p-1},x_p-y_{p-1})}{R}\, Q^{*(k_p-1)}(b_p-a_p,y_p-x_p)\,.
\end{split}
\end{equation*}

\smallskip

We next apply the estimate \eqref{eq:est-q} to the kernels \(q(a_p-b_{p-2},x_p-y_{p-2})\), as well as to \(q^{(f)}(a_2,x_2)\) and to \emph{one instance} of \(q^{(f)}(a_1,x_1)\). 
This allows us to sum over all space variables iteratively, starting from $y_\ell,x_{\ell}, y_{\ell-1}, x_{\ell-1}$, $\ldots$ until $y_1, x_1$.
To this purpose, recalling \eqref{eq:Kappa},
we have $\sum_{y_p \in \Z^2 } Q^{*(k_p-1)}(b_p-a_p,y_p-x_p) = K^{*(k_p-1)}(b_p - a_p)$.
Using also $\sum_{x_p\in \Z^2} q(a_p-b_{p-1},x_p-y_{p-1}) =1$ and $\sum_{x_1 \in \Z^2} q^{(f)}(a_1, x_1) = 1$, we then obtain
\begin{multline}
\label{eq:Msimple}
  \cM^{(f)}_{I,\ne}(k,k')
   \, \le \
  \sum_{\ell=2}^{\infty} \
  (\hat{c})^\ell
  \sumtwo{\mathsf{d}_1,\ldots, \mathsf{d}_{\ell} \in \{12,23,13\}}{\mathsf{d}_p \ne \mathsf{d}_{p-1} \, \forall p=2,\ldots,\ell} \
   \sum_{k_1,\ldots,k_{\ell} \, \in \, \mathcal{C}_{k,k'}(\mathsf{d}_1,\ldots, \mathsf{d}_{\ell})} \
  \sum_{a_1\le b_1<\cdots<a_\ell\le b_\ell\in I} \\
  \frac{u(a_1)}{R}  \, K^{*(k_1-1)}(b_1-a_1) \,\frac{u(a_2)}{R} \, K^{*(k_2-1)} (b_2-a_2) \cdot\prod_{p=3}^\ell K(a_p-b_{p-2}) \, K^{*(k_p-1)}(b_p-a_p)\,.
\end{multline}
Note that there is no longer any dependence on~$f$.
  
We now sum over the time variables \(b_\ell,b_{\ell-1}\) which only appear in the last two stretches (these are free ends of the diagram, see \Cref{fig:diagram}). 
As a consequence, the kernels $K^{*k_\ell}$ and $ K^{*k_{\ell-1}}$ are erased from \eqref{eq:Msimple}, since \(\sum_{b_p} K^{*(k_p-1)}(b_p-a_p) \le \sum_{m \in \N_0} K^{*(k_p-1)}(m) =1\).
This is in fact a crucial step, since we can now sum over the stretch sizes \(k_{\ell},k_{\ell-1}\) \emph{removing the constraint \(\mathcal{C}_{k,k'}(\mathsf{d}_1,\ldots, \mathsf{d}_{\ell})\)}. 
Indeed, \emph{there is at most one pair $(k_\ell, k_{\ell-1})$ for which the constraint is fulfilled}, see \eqref{eq:constraint}, hence $\sum_{k_{\ell}} \sum_{k_{\ell-1}} \ind{(k_1, \ldots, k_\ell) \in \mathcal{C}_{k,k'}(\mathsf{d}_1,\ldots, \mathsf{d}_{\ell})} \le 1$ for any $\mathsf{d}_1,\ldots, \mathsf{d}_{\ell}$ and
$k_1,\ldots, k_{\ell-2}$.

If $\ell \ge 3$, we then sum freely 
over \(k_p\) for $1 \le p \le \ell-3$, replacing $K^{*(k_p-1)}(b_p-a_p)$ by $U(b_p - a_p)$ with
\begin{equation*}
   U(m) \coloneqq \sum_{k\ge 0} K^{*k}(m) \,.
\end{equation*}
Estimating \(u(a_1), u(a_2) \le \frac{c}{(j-1) \tN}\) uniformly for $a_1, a_2 \ge (j-1)\tN$, see \eqref{eq:Itj}, and bounding the number of labels $(\mathsf{d}_1,\ldots, \mathsf{d}_{\ell})$ by $3 \cdot 2^{\ell-1}$, we finally get
\[
  \cM^{(f)}_{\tilde I_j,\neq}(k,k')
  \le \frac{3^3 \, c^2}{2 \, (j-1)^2 \, R^2 }\,\sum_{\ell=2}^{\infty} (2 \hat c)^\ell\, J_{\tN,\ell}\,,
\]
where we define $J_{\tN,\ell}$ as follows:
for $\ell \in \{2,3\}$ we set
\begin{equation}
  \label{def:Jell23}
  J_{\tN,2} \coloneqq \sum_{a_1 < a_2 \,\in\, \tilde I_j}  \oneover{(3\tN)^2} \,, \qquad
  J_{\tN,3} \coloneqq \sum_{a_1\le b_1
  < a_2 < a_3 \,\in\, \tilde I_j}   \oneover{3\tN}
  \, U(b_1-a_1) \, \oneover{3\tN} \, K(a_3 - b_1) \,,
\end{equation}
while for $\ell \ge 4$ we set
\begin{equation}
  \label{def:Jell}
  \begin{split}
      J_{\tN,\ell} \coloneqq \sumtwo{a_1\le b_1<\cdots
  <a_{\ell-2} \le b_{\ell-2}}{<a_{\ell-1}< a_\ell \,\in\, \tilde I_j}   \oneover{3\tN}
  \, U(b_1-a_1) \, \oneover{3\tN} 
  \, U(b_2-a_2) 
  \prod_{p=3}^{\ell-2} K(a_p-b_{p-2}) \, &U(b_p-a_p) \\
  \cdot K(a_{\ell-1}-b_{\ell-3})
  \cdot & K(a_{\ell}-b_{\ell-2}) \,.
  \end{split}
\end{equation}
Since by assumption we have \(j \ge 2\), our goal \eqref{eq:our-claim} follows by the next claim, which shows that \(J_{\tN,\ell}\) decays super-exponentially in~\(\ell\), uniformly in~\(\tN\).
By \Cref{claim:stretches}, the series is uniformly bounded in \(\tN\), hence
\[
  \sup_{k,k'\ge 1} \, \sup_{f} \, \widetilde{\cM}^{(f)}_{\tilde I_j}(k,k')
  \le \frac{C}{j R^2} \,.
\]
Combining this with the estimate already proved for \(\cM^{(f)}_{\tilde I_j,=}(k)\), we obtain \eqref{eq:our-claim=} and \eqref{eq:our-claim}.
\qed

\begin{claim}
  \label{claim:stretches}
  For every \(\varepsilon>0\), there exists a constant \(C_{\varepsilon} <+\infty\) such that 
  \[
     \sup_{\tN\in\N} J_{\tN,\ell}  \leq C_{\varepsilon} \,\varepsilon^{\ell} \qquad \forall \ell\geq 2 \,.
  \]
\end{claim}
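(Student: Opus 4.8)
The plan is to prove the quantitatively stronger estimate $J_\ell \le C^\ell/\ell!$ for a universal constant $C$ and all $\tN$ larger than some absolute $\tN_0$; this immediately gives \Cref{claim:stretches}, since $C^\ell/\ell! = \varepsilon^\ell\,(C/\varepsilon)^\ell/\ell! \le \varepsilon^\ell\,\e^{C/\varepsilon}$, while for $\tN\le\tN_0$ one has $\mathcal L_{\tN}(\tau,\tau')\le 2\tN$ deterministically, so $J_\ell = 0$ for $\ell > 2\tN_0$ and the remaining finitely many $\ell$ are absorbed by enlarging $C_\varepsilon$. Throughout I would work with the explicit expression \eqref{def:Jell} for $J_\ell$, taking $j=1$ (so all positions $a_i,b_i$ lie in $\llb 1,\tN\rrb$, by translation invariance). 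First I would record the renewal estimates that drive everything: writing $u(n)=\P(S_{2n}=0)\le C/n$ and $\pi R_n = \log n + \alpha + o(1)$, one has $K(m) = u(m)/R_{\tN}\le C/(m\log\tN)$ on $\llb 1,\tN\rrb$, and the renewal mass function $U(m) = \sum_{k\ge 0}K^{*k}(m)$ obeys the two-regime bound $U(m)\le C/(m\log\tN)$ for $m\le \tN/(\log\tN)^2$ and $U(m)\le C\log\tN/\tN$ for $\tN/(\log\tN)^2\le m\le\tN$ — standard estimates in the spirit of \cite{CSZ19-Dickman}. I abbreviate $\psi(m)\coloneqq\bigl(1-R_{m\wedge\tN}/R_{\tN}\bigr)_+$, which is of order $\log(\tN/m)/\log\tN$ and plays the role of the natural ``profile'' below.

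The core of the argument is to uncover a transfer-matrix recursion in \eqref{def:Jell}, governed by the relative position of the two renewals. Parametrising a configuration by the stretch spans $s_i\coloneqq b_i-a_i\ge 0$ and the gaps $g_i\coloneqq a_{i+1}-b_i\ge 1$, the quantity that controls the cost of each new alternation is the current \emph{lead} $D_i\coloneqq b_i-b_{i-1}=g_{i-1}+s_i$, the distance between the two frontiers after stretch $i$: indeed the jump weight of the next stretch is $K(a_{i+1}-b_{i-1})=K(D_i+g_i)$, and the new lead is $D_{i+1}=g_i+s_{i+1}$. Introducing the transfer kernel $T(D,D')\coloneqq\sum_{g=1}^{D'}K(D+g)\,U(D'-g)$, one may bound \eqref{def:Jell} by $J_\ell\le\sum_{D_2,\dots}\mathrm{w}_{\mathrm{init}}(D_2)\,\prod_i T(D_i,D_{i+1})\,\mathrm{w}_{\mathrm{term}}$, and everything reduces to the one-step estimate
\[
  \sum_{D'\ge 1}T(D,D')\,\psi(D')^{b}\;\le\;\frac{C}{b}\,\psi(D)^{b+1}\qquad(b\ge 1),
\]
uniformly in $\tN$ and $1\le D\le\tN$. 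This is where the heuristic ``the lead can only grow, so alternations get progressively harder'' becomes an estimate: one shows $\sum_{s\ge 0}U(s)\,\psi(g+s)^{b}\le C\,\psi(g)^{b}$ — here the spread-out mass of $U$ (which alone satisfies only $\sum_{m\le\tN}U(m)\asymp\log\tN$) is tamed by the decreasing weight $\psi^b$, using the two-regime bound — and then $\sum_{g\ge 1}K(D+g)\,\psi(g)^{b}\le\tfrac{C}{b+1}\psi(D)^{b+1}$ via the change of variables $g=\tN^{t}$ and the elementary Watson-type identity $\int_{\log D/\log\tN}^{1}(1-t)^{b}\,\dd t=\psi(D)^{b+1}/(b+1)$; the boundary regime $D$ close to $\tN$ is handled separately, using that there $K(D+\cdot)$ is supported on an interval of length $\tN-D$ of order $\tN\,\psi(D)$.

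Iterating, with $\Psi_k(D)$ the total weight of all ways to complete the diagram with $k$ further alternations from lead $D$, an induction on $k$ using the one-step estimate gives $\Psi_k(D)\le a_k\,\psi(D)^{b_0+k}$ with $b_0$ a fixed absolute constant (taking $b_0=2$ also dominates $\mathrm{w}_{\mathrm{init}}$) and $a_k=a_0\prod_{j=1}^{k}\tfrac{C}{b_0+j-1}=a_0\,C^k(b_0-1)!/(b_0+k-1)!$, so $a_\ell\le C'\,C^{\ell}/\ell!$. It then remains to check that the two initial stretches contribute a weight $\mathrm{w}_{\mathrm{init}}$ whose sum against $\psi(D_2)^{b_0+\ell-2}$ is bounded uniformly in $\tN$ — here the two factors $\tfrac1\tN$ in \eqref{def:Jell}, together with the uniform starting points of $\tau,\tau'$, exactly compensate the overall translation degree of freedom and the relative position of the two renewals, so no spurious $\log\tN$ survives — and, similarly, that the two terminal factors $K(a_{\ell-1}-b_{\ell-3})K(a_\ell-b_{\ell-2})$ (whose right endpoints were already summed out) fit the scheme as a bounded $\Psi_2$-type term. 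Combining these yields $J_\ell\le C''\,C^{\ell}/\ell!$, as required.

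The main obstacle is precisely the one-step estimate \emph{together with the fact that it propagates with a genuinely decreasing power of $\psi$}: this is the quantitative content of ``increasing lead / compounding costs'', and it is what upgrades the naive bound $J_\ell\le(C/\log\tN)^{c\ell}$ (whose rate tends to $0$ but is not uniform in $\tN$) to the factorial decay needed for a \emph{universal} $C_\varepsilon$. Making it work requires the sharp two-regime description of $U(m)$ rather than the crude $\sum_{m\le\tN}U(m)\asymp\log\tN$, careful treatment of the two ``free ends'' of the interaction diagram (the first stretch, which no jump of the other renewal covers, and the two terminal stretches of \eqref{def:Jell}), and the book-keeping for $D$ near $\tN$; the small-$\tN$ case is then trivial as indicated above.
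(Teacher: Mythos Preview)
Your approach differs substantially from the paper's. The paper passes to the continuous limit: it bounds $U(m)\le C\frac{\log\tN}{\tN}G_0(m/\tN)$ via the Dickman subordinator, notes that the $\log\tN$ factors in $U$ and $K$ cancel, and reduces $J_\ell$ to a multiple integral over $[0,1]^{2\ell-4}$. It then introduces an exponential tilt $\e^{-\lambda\sum u_i}$ (exploiting the constraint $\sum u_i+\sum v_i<1$), uses $\int_0^1 G_0(u)\e^{-\lambda u}\,du\le c/(2+\log\lambda)$, and bounds the remaining integral $\int\prod_i\frac{1}{v_i+v_{i-1}}\,d\mathbf v$ by $2\pi^{\ell-2}$ via a short recursion. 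This yields $J_\ell\le C\e^\lambda\bigl(C'/\log\lambda\bigr)^{\ell-2}$, and taking $\lambda$ large gives the claim.

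Your transfer--matrix scheme with the lead variable $D_i$ is a natural alternative, but the argument has a genuine gap at the key one-step estimate. Your stated two-regime bound on $U$ is \emph{incorrect}: the Dickman asymptotics give
\[
  U(m)\ \asymp\ \frac{\log\tN}{m\,(\log(\tN/m))^2}\ =\ \frac{1}{m\,\psi(m)^2\,\log\tN}\qquad (m\ll\tN),
\]
which for $m$ around (say) $\tN/\log\tN$ exceeds $C/(m\log\tN)$ by a factor of order $(\log\tN/\log\log\tN)^2$. With the correct bound, the change of variable $s=\tN^\sigma$, $g=\tN^\gamma$ gives
\[
  \sum_{s\ge 0} U(s)\,\psi(g+s)^b\ \approx\ \int_0^1\frac{(1-\max(\gamma,\sigma))^b}{(1-\sigma)^2}\,d\sigma\ =\ \frac{b}{b-1}\,\psi(g)^{b-1}-\psi(g)^b\,,
\]
so the $U$-step \emph{loses} one power of $\psi$ rather than preserving it. Your $K$-step does gain one power (up to an additive $O(1/\log\tN)$), so the net effect of one transfer is
\[
  \sum_{D'\ge 1}T(D,D')\,\psi(D')^b\ \lesssim\ \frac{C}{b-1}\,\psi(D)^b\ +\ \frac{C}{\log\tN}\,:
\]
the exponent $b$ stays \emph{fixed} and you only pick up a multiplicative $C/(b-1)$. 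Iterating therefore gives geometric decay with rate $C/(b-1)$, not factorial decay $C^\ell/\ell!$. (The paper's bound, after optimising in $\lambda$, is of order $(C/\log\ell)^\ell$, likewise not factorial; I do not believe $J_\ell\le C^\ell/\ell!$ is actually true.)

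A corrected version of your scheme --- fix a large power $b$, iterate, and absorb the $O(1/\log\tN)$ error via the remaining ``budget'' $\tN-b_i$ or an exponential tilt as in the paper --- would still prove the claim, since $C/(b-1)<\varepsilon$ for $b$ large enough. But as written, the bound on $U$, the assertion $\sum_s U(s)\psi(g+s)^b\le C\psi(g)^b$, and hence the factorial conclusion, all fail.
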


\begin{remark}[Renewal interpretation]
  The quantity $J_{\tN,\ell}$ from \eqref{def:Jell23}-\eqref{def:Jell} enjoys a probabilistic interpretation. Note that it only depends on the length of the interval \(\tilde I_j\), so we can replace \(\tilde I_j\) by \(\llb 1,3\tN \rrb\). Let \(\tau,\tau'\) be independent renewal processes, started from $\tau_0, \tau_0'$ uniformly sampled in \(\llb 1,3\tN \rrb\) and with step probability mass function \(K(m)\) from \eqref{eq:Kappa}. 
  Denote by \(\mathcal{L}_{3\tN}(\tau,\tau')\) the number of alternating stretches of \(\tau,\tau'\) in the interval \(\llb 1,3\tN \rrb\), then we can write 
  \[
    J_{\tN,\ell} 
    = \mathrm{P}\bigl( \mathcal{L}_{3\tN}(\tau,\tau')\ge\ell \,, \, A_{\tN,\ell} \bigr) \,.
  \]
  where the event $A_{\tN,\ell}$ is defined as follows:
  denoting by $\sigma_\ell = \sigma_\ell(\tau,\tau')$ the starting point of the $\ell$-th alternating stretch (if it exists), we set
  \[
    A_{\tN,\ell} 
    = \{\tau_0 < \tau'_0, \, 
    \tau\cap\tau'\cap \llb 1, \sigma_{\ell}-1 \rrb  =\varnothing \} \,.
  \]
  Indeed, the right hand side of \eqref{def:Jell23}-\eqref{def:Jell} gives precisely the probability that there are at least~$\ell$ alternating stretches of $\tau, \tau'$, with $\tau_0 < \tau_0'$ and no common point before $\sigma_{\ell}$ (see \Cref{fig:taustretch}).

  \begin{figure}[htbp]
    \centering
    \captionsetup{width=.9\linewidth}
    \includegraphics[scale=1.1]{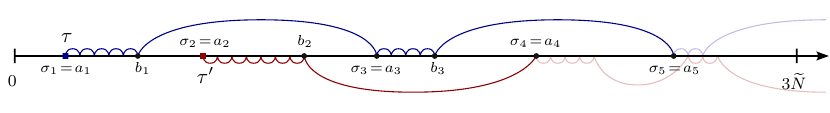}
    \caption{\footnotesize
    Illustration of the renewal interpretation of the formula~\eqref{def:Jell} for \(J_{\tN,\ell}\). 
    The first points of~\(\tau,\tau'\) are chosen uniformly in \(\llb 1,3\tN\rrb\) and the two renewals have inter-arrival distribution \(K(m)\) defined above.
    The stretches alternate between \(\tau\) and \(\tau'\) and have starting and ending point denoted by~\(a_p\) (or \(\sigma_p\)) and \(b_p\), in reference to the interaction diagrams (see \Cref{fig:diagram}).
    The number of alternating stretches is denoted \(\mathcal{L}_{3\tN}(\tau,\tau')\): in the above picture we have \(\mathcal{L}_{3\tN}(\tau,\tau') \geq \ell= 5\) and the two renewals \(\tau\cap \tau'\) do not intersect before the beginning \(\sigma_5\) of the \(5^{\rm th}\) stretch (but they might intersect afterwards).}
  \label{fig:taustretch}
  \end{figure}

  In particular, it follows by \Cref{claim:stretches} that, on the event $A_{\tN,\ell}$, the number of alternating stretches $\mathcal{L}_{3\tN}(\tau,\tau')$ has finite exponential moments:
  \(\mathrm{E}\bigl[\e^{\lambda \, \mathcal{L}_{3\tN}(\tau,\tau')}\indic_{A_{\tN,\ell}}\bigr] <+\infty\) for all $\lambda>0$ uniformly in $\tN$, which is equivalent to \(\sum_{\ell=2}^{\infty} \e^{\lambda \, \ell} \, J_{\tN,\ell} <+\infty\).
\end{remark}

\subsubsection{\texorpdfstring{Proof of \Cref{claim:stretches}}{Proof of stretches}}

The proof is similar to the argument in \cite[Section 5.3]{CSZ20}, but we include it because the present version is substantially simpler.

Let us fix \(\ell\geq 2\).
We can combine the definitions~\eqref{def:Jell23}-\eqref{def:Jell} in the single formula
\[
  J_{\tN,\ell} 
  = \sum_{ \substack{0<a_1\le b_1<\cdots<a_{\ell-2}\\  \le b_{\ell-2}<a_{\ell-1}< a_\ell\leq 3\tN} } \oneover{(3\tN)^{2}}  \prod_{i=1}^{\ell-2}U (b_i-a_i) \prod_{i=3}^{\ell} K(a_i-b_{i-2}) \,.
\]
Now, by \cite[Theorem~1.4]{CSZ19-Dickman} (and recalling~\eqref{def:u}-\eqref{eq:Rbounds} for controlling \(u(m)\) and \(R_{\tN}\)), we have that there is a constant \(C>1\) such that
\[
  U(m) 
  \leq C \, \frac{\log (3\tN)}{3\tN} \cdot G_0\Bigl( \frac{m+1}{3\tN} \Bigr)\,, \qquad K(m) 
  \leq \frac{C}{\log(3\tN)} \, \frac{1}{m}\,,
\]
where \(G_0(t) \coloneqq \int_0^{\infty} \frac{1}{\Gamma(s+1)}\, s \, t^{s-1} \,\e^{-\gamma s} \dd s\) for \(t\in (0,1]\) is the renewal function of the so-called Dickman subordinator.
(By taking \(\tN\) large we could make the constant \(C\) arbitrarily close to \(1\).)

Plugging this in the above formula (and noticing that all the terms ``\(\log (3\tN)\)'' cancel out)
\[
\begin{split}
  J_{\tN,\ell} 
  & \leq C^{2(\ell-2)}  \sum_{ \substack{0<a_1\le b_1<\cdots<a_{\ell-2}\\  \le b_{\ell-2}<a_{\ell-1}< a_\ell\leq 3\tN} }  \frac{1}{(3\tN)^2} \prod_{i=1}^{\ell-2} \frac{1}{3\tN} G_0\Bigl( \frac{b_i-a_i+1}{3\tN} \Bigr) \prod_{i=3}^{\ell}  \frac{1}{3\tN} \frac{3\tN}{a_i-b_{i-2}} \\
  & \leq (C')^{\ell}  \idotsint\limits_{\substack{0<s_1 < t_1<\cdots<s_{\ell-2}\\  \le t_{\ell-2}<s_{\ell-1}< s_\ell <1} }  \prod_{i=1}^{\ell-2} G_0(t_i-s_i) \prod_{i=3}^{\ell}  \frac{1}{s_i-t_{i-2}}  \de \mathbf{s} \de \mathbf{t}\,,
\end{split}
\]
the last inequality following from a Riemann sum bound.
With a change of variable \(u_i = t_i-s_i\) for \(1\leq i \leq \ell-2\) and \(v_i = s_i-t_{i-1}\) for \(1 \le i\leq \ell -1\) (with $t_0=0$), as well as \(v_{\ell} = s_{\ell}-s_{\ell-1}\), we get
\[
  J_{\tN,\ell} 
  \leq (C')^{\ell} \idotsint\limits_{\substack{ u_i\in (0,1),v_i\in(0,1) \\ u_1+\cdots +u_{\ell-2} + v_1+\cdots +v_{\ell} <1} }  \prod_{i=1}^{\ell-2} G_0(u_i) \cdot \prod_{i=3}^{\ell-1}  \frac{1}{v_i + u_{i-1} + v_{i-1}} \cdot
  \frac{1}{v_{\ell} + v_{\ell-1}}  \de \mathbf{u} \de \mathbf{v} \,.
\]
Then, bounding \(v_i + u_{i-1} + v_{i-1} \geq v_i + v_{i-1}\), introducing a multiplier \(\lambda>0\) and using that \( \prod_{i=1}^{\ell-2} \e^{\lambda u_i}  \leq \e^{\lambda}\), we get
\[
  J_{\tN,\ell}
  \leq (C')^{\ell}\, \e^{\lambda}\, \biggl( \int_0^1 G_0(u) \e^{-\lambda u} \de u \biggr)^{\ell-2}\, \int_{ (0,1)^{\ell-1}} \prod_{i=3}^{\ell}\frac{1}{v_i + v_{i-1}} \de \mathbf{v}  \,.
\]

By \cite[Lemma~5.2]{CSZ20}, there exists $c < \infty$ such that for all \(\lambda \ge 1\)
\[
  \int_0^1 G_0(u) \e^{-\lambda u} \de u 
  \le \frac{c}{2 + \log \lambda} \,,
\]
so we end up with 
\[
  J_{\tN,\ell} 
  \leq  (C')^{2}\, \e^{\lambda} \, \biggl( \frac{c C'}{2+\log \lambda} \biggr)^{\ell-2} \int_{ (0,1)^{\ell-1}} \prod_{i=3}^{\ell}\frac{1}{v_i + v_{i-1}} \de \mathbf{v} \,,
\]
and it remains to control the last integral.

For this, define \(\phi^{(0)}(v) \equiv 1\) and by iteration \(\phi^{(k)}(v) = \int_{(0,1)} \frac{1}{v+u} \phi^{(k-1)}(u) \de u\), so that the integral is equal to \(\int_0^1\phi^{(\ell-2)}(v) \dd v\).
We now show by induction that for any \(k\geq 1\) 
\[
  \forall v\in (0,1)\qquad \phi^{(k)}(v) 
  \le \frac{\pi^k}{\sqrt{v}} \,.
\]
The base case \(k=0\) is trivial, since \(\phi^{(0)}(v) = 1\le \frac{1}{\sqrt{v}}\) for \(v\in(0,1)\). 
For the inductive step, we assume that \(\phi^{(k-1)}(v) \leq \frac{\pi^{k-1}}{\sqrt{v}}\) for all \(v\in (0,1)\). Then, we have
\[
\begin{split}
  \phi^{(k)}(v) 
  = \int_0^1 \frac{1}{v+u} \phi^{(k-1)}(u) \de u 
  & \le \pi^{k-1} \int_0^1 \frac{1}{\sqrt{u}(v+u)} \de u  \\
  \text{(setting $u=t^2$)}\ \ & = 2\pi^{k-1} \int_0^1 \frac{1}{v+t^2}\de t
  =\frac{2\pi^{k-1}}{\sqrt{v}} \arctan\Bigl(\frac{1}{\sqrt{v}}\Bigr)
  \le\frac{\pi^k}{\sqrt{v}}.
\end{split}
\]
Therefore, we obtain that 
\[
  \int_{ (0,1)^{\ell-1}} \prod_{i=3}^{\ell}\frac{1}{v_i + v_{i-1}} \de \mathbf{v} 
  = \int_{0}^1  \phi^{(\ell-2)}(v) \de v 
  \le \int_{0}^1  \frac{\pi^{\ell-2}}{\sqrt{v}} \de v 
  \le 2\pi^{\ell-2}\,.
\]
Altogether, we  conclude that 
\[
  J_{\tN,\ell} 
  \le 2 \, (C')^{2}\, \e^{\lambda} \, \biggl( \frac{c C' \pi }{2+\log \lambda} \biggr)^{\ell-2}
\]
Taking \(\lambda = \lambda_{\varepsilon}\) large enough concludes the proof of \Cref{claim:stretches}.
\qed

\section{Proof of the other main results}
\label{sec:others}

In this section, we give the proofs of \Cref{th:mainSHF,cor:mainSHF,cor2:SHF,th:mainSHE,thm:freeenergy}. 
We start with \Cref{thm:freeenergy}, which is a direct consequence of \Cref{thm:quantitative}.

\begin{proof}[Proof of \Cref{thm:freeenergy}]
  
  The lower bound in \eqref{eq:free-energy-bounds} is proved in \Cref{app:lower-free-energy}: we follow~\cite[\S4]{BL17}, exploiting superadditivity and concentration of measure arguments for \(\log Z_N^{\beta,\w}\).
  We focus here on the upper bound in \eqref{eq:free-energy-bounds}, which we deduce from \Cref{thm:quantitative}.

  We first show that one may truncate $Z_{N}^{\beta,\w}$ at~$1$ in the definition \eqref{eq:def-free-energy} of the free energy and write
  \begin{equation}
  \label{eq:free-energy-equiv}
    \tf(\beta) 
    = \lim_{N\to\infty} \frac1N \,\EE\bigl[ \log  (Z_{N}^{\beta,\w}\wedge 1) \bigr]  \,.
  \end{equation}
  Indeed, since \(Z_N^{\beta,\w} = (Z_N^{\beta,\w} \wedge 1) (Z_N^{\beta,\w}\vee 1)\), it suffices to show that
  \[
    \lim_{N\to\infty} \frac1N \EE\bigl[\log  (Z_{N}^{\beta,\w}\vee 1) \bigr] 
    = 0 \,.
  \]
  But this follows by the inequalities \(1\leq Z_N^{\beta,\w}\vee 1 \leq 1+ Z_N^{\beta,\w}\), which yields
  \[
    0
    \leq \lim_{N\to\infty} \frac1N \, \EE\bigl[ \log  (Z_{N}^{\beta,\w}\vee 1) \bigr] 
    \leq \lim_{N\to\infty} \frac1N  \log \EE \bigl[ 1+ Z_{N}^{\beta,\w} \bigr]
    =0 \,,
  \]
  recalling also that \(\EE[Z_{N}^{\beta,\w}]=1\).

  Recalling \eqref{eq:etheta} and applying \eqref{eq:upper-main} to $Z_N^{\beta,\w} = Z_N^{\beta,\w}(0) = Z_N^{\beta,\w}(\ind{0})$, we get
  \[
    \EE\bigl[ Z_N^{\beta,\w} \wedge 1 \bigr] 
    \leq \frac{1}{c_2} \, \exp\Bigl(-c_2 \, \e^{\th(N,\beta)} \Bigr)
    = \frac{1}{c_2} \, \exp\Bigl(-c_2 \,\e^{\alpha + o(1)} N \, \e^{-\frac{\pi}{\sigma^2(\beta)}} \Bigr) \,.
  \]
  Applying relation \eqref{eq:free-energy-equiv} together with $\EE\bigl[ \log  (Z_{N}^{\beta,\w}\wedge 1) \bigr]  \leq \log  \EE\bigl[ Z_{N}^{\beta,\w}\wedge 1 \bigr]$ (by Jensen's inequality), we obtain the upper bound on the free energy in \eqref{eq:free-energy-bounds} for some \(c \in (0,c_2 \, \e^\alpha)\).
\end{proof}

We then prove \Cref{th:mainSHF} about the SHF, which follows from the corresponding result for directed polymers, \Cref{thm:quantitative}.

\begin{proof}[Proof of \Cref{th:mainSHF}]

  We first prove \eqref{eq:mainSHF}.
  Fix $\th\in\R$ and let $\beta_N = \beta_N(\th)$ for $N\in\N$ satisfy \eqref{eq:critical-regime}, or equivalently \eqref{eq:thetasharp}. 
  We are going to exploit \eqref{eq:upper-main} for $\beta = \beta_N$, so that \(\th(N,\beta) \to \th\) as $N\to\infty$, see \eqref{eq:Ntheta0}.
  Recall the convergence \eqref{eq:convergence-polymer} of the directed polymer partition function to the SHF, and note that the support of $f=\varphi^{(N)}$ is $\sqrt{N}$ times the size of the support of $\varphi$. 
  If we let $N\to\infty$ in \eqref{eq:upper-main} for $f = \varphi^{(N)}$, since \(\th(N, \beta_N) \to \th\) we obtain the upper bound in \eqref{eq:mainSHF} for $t=1$:
  \begin{equation} 
  \label{eq:mainSHFt=1}
    \sup_{\varphi \in \cM_1\bigl(\e^{c_0\, \e^{\th}}\,\bigr)}\EE\bigl[ \mathscr{Z}_1^\th(\varphi) \wedge 1 \bigr]
    \leq \frac{1}{c_2}\, \e^{-c_2 \, \e^{\th} } \,.
  \end{equation}
  For the lower bound, we note that
  \[
    \sup_{\varphi \in \cM_1\bigl(\e^{c_0\, \e^{\th}}\,\bigr)}\EE\bigl[ \mathscr{Z}_1^\th(\varphi) \wedge 1 \bigr]
    \ge  \sup_{\varphi \in \cM_1\bigl(\e^{c_0\, \e^{\th}}\,\bigr)}\frac{1}{2} \, \PP\Bigl( \mathscr{Z}_1^\th(\varphi) \ge \frac{1}{2}\Bigr)
    \ge C_1\,\e^{-c_1 \, \e^{\th} } \,,
  \]
  where the last inequality follows by the lower bound in \eqref{eq:probab-shf}, which we prove below.
  For general $t>0$, we use the scaling covariance property $\mathscr{Z}_1^{\th + \log t}(\varphi) \overset{d}{=} \mathscr{Z}_t^\th(\varphi_{\sqrt{t}})$, see the second relation in \eqref{eq:scaling-th-t}:
  applying \eqref{eq:mainSHFt=1} with $\th$ replaced by $\th + \log t$ yields
  \[
    \frac{1}{c_1} \, \e^{ -c_1 \, t \, \e^{\th} } 
    \leq \sup_{\varphi \in \cM_1\bigl(\e^{c_0\, t\, \e^{\th}}\,\bigr)} \EE\bigl[ \mathscr{Z}_t^\th(\varphi_{\sqrt{t}}) \wedge 1 \bigr]
    \leq \frac{1}{c_2}\, \e^{-c_2 \, t \, \e^{\th} } \,.
  \]
  We finally note that $\psi = \varphi_{\sqrt{t}} \in \cM_1\bigl(\e^{c_0\, t\, \e^{\th}} \sqrt{t}\,\bigr)$ for $\varphi \in \cM_1\bigl(\e^{c_0\, t\, \e^{\th}}\,\bigr)$, which proves \eqref{eq:mainSHF}.

\smallskip

  Next, we prove \eqref{eq:probab-shf}.
  We already remarked that the upper bound follows by the upper bound in \eqref{eq:mainSHF} and Markov's inequality $\PP(Z \ge \eps) \le (\eps \wedge 1)^{-1} \, \EE[Z \wedge 1]$, which yields $C_{2,\eps} = (c_2 \, \eps)^{-1}$. 
  For the lower bound, it suffices to consider the uniform density $\varphi = \mathcal{U}_{\sqrt{t}}$ on the ball of radius $\sqrt{t}$, see \eqref{eq:unif-dist}.
  The Paley--Zygmund inequality gives, for $Z = \mathscr{Z}_t^\th(\mathcal{U}_{\sqrt{t}})$ with $\EE[Z]=1$,
  \[
    \PP\bigl(Z \ge \eps \bigr) 
    \ge (1-\eps)^2 \, \frac{\EE[Z]^2}{\EE[Z^2]} 
    = \frac{(1-\eps)^2}{1 + \Var[Z]}  \,.
  \]
  Since $\Var[\mathscr{Z}_t^\th(\mathcal{U}_{\sqrt{t}})] \le c_3 \, \e^{c_3 \, t \, \e^\th}$ by \eqref{eq:ub2mom-unif} from \Cref{prop:second-moment-DP} and \eqref{eq:convergence-polymer}, we see that the lower bound in \eqref{eq:probab-shf} holds with $c_1 = c_3$ and $C_{1,\eps} = \frac{(1-\eps)^2}{1+c_3}$.
\end{proof}

We next deduce \Cref{cor:mainSHF} from \Cref{th:mainSHF} and \Cref{prop:second-moment-DP}.

\begin{proof}[Proof of \Cref{cor:mainSHF}]
  Recall the uniform density \eqref{eq:unif-dist}.
  Fix $c, \delta > 0$ (to be determined later) and set $\rho = \e^{c \, t \, \e^{\th}}$, $\eps \coloneqq t \, \e^{-\delta \, t \, \e^{\th}}$.
  By Markov's inequality, setting \(\eps' \coloneqq \frac{\eps}{\pi \, \rho^2 \, t} = \frac{1}{\pi} \e^{-(\delta+2c) \, t \, \e^\th} \le 1\),
  \[
    \PP\Bigl(\mathscr{Z}_t^\th\pigl(B(0,\rho \sqrt{t  \, })\pigr) > \eps \Bigr) 
    = \PP\Bigl(\mathscr{Z}_t^\th\pigl(\mathcal{U}_{\rho\sqrt{t  \, }}\pigr) > \eps' \Bigr) 
    \le \frac{\EE\bigl[\mathscr{Z}_t^\th(\mathcal{U}_{\rho \sqrt{t  \, }}) \wedge 1\bigr]}{\eps' \wedge 1}
    \le \frac{\frac{1}{c_2} \, \e^{-c_2 \, t \, \e^\th}}{\frac{1}{\pi} \, \e^{-(\delta+2c) \, t \, \e^\th}} \,,
  \]
  where we applied the upper bound from \eqref{eq:mainSHF} in \Cref{th:mainSHF} assuming $c \le c_0$. 
  The right hand side is $\frac{\pi}{c_2} \, \e^{-(c_2 - 2c - \delta) \, t \, \e^\th} \le \frac{\pi}{c_2} \, \e^{-\delta \, t \, \e^\th}$
  if we fix $c < \min\{c_0, \frac{c_2}{2} \}$ and $\delta \le \frac{1}{2}(c_2 - 2c)$.
  This proves the first line in \eqref{eq:SHF-largeball} provided we further take $\delta \le \frac{c_2}{\pi}$.

  For the second line, we exploit the following upper bound on the variance of the SHF:
  \begin{equation} 
  \label{eq:var-shf-unif}
    \Var\Bigl[ \mathscr{Z}_t^\th\pigl(\mathcal{U}_{\rho \sqrt{t  \, }}\pigr) \Bigr]
    \le c_3 \, \frac{\exp\bigl(c_3 \, t \, \e^{\th} \bigr)}{\rho^2} \,.
  \end{equation}
  This follows by \eqref{eq:ub2mom-unif} for $t=1$, recall \eqref{eq:convergence-polymer}, while the general case $t > 0$ can be deduced by the scaling properties of the SHF, see the second relation in \eqref{eq:scaling-th-t}.
  Let us set $\chi\coloneqq t \, \e^{\delta \, t \, \e^\th}$. 
  Note that $\chi'\coloneqq\frac{\chi}{\pi \, \rho^2 t} = \frac{1}{\pi} \e^{(\delta -2 c) t \e^{\th} } \leq \frac{1}{\pi}$, provided that \(\delta \leq 2c\).
  Since $\EE\bigl[\mathscr{Z}_t^\th(\mathcal{U}_{\rho \sqrt{t  \, }})\bigr] = 1$, Chebyshev's inequality yields
  \[
    \PP\pigl(\mathscr{Z}_t^\th\pigl(B(0,\rho \sqrt{t  \, })\pigr) \le \chi \pigr) 
    = \PP\pigl(\mathscr{Z}_t^\th\pigl(\mathcal{U}_{\rho \sqrt{t  \, }}\pigr) \le \chi' \pigr) 
    \le \frac{\Var\pigl[ \mathscr{Z}_t^\th(\mathcal{U}_{\rho \sqrt{t  \, }}) \pigr]}{(1-\frac{1}{\pi})^2}
    \le 3 c_3 \, \frac{\e^{c_3 \, t \, \e^\th}}{\rho^2} \,.
  \]
  Plugging in $\rho = \e^{c \, t \, \e^{\th}}$ with $c = c'' > \frac{c_3}{2}$, the right hand side is $3c_3 \, \e^{-(2c'' - c_3) \, t \, \e^\th} \le \frac{1}{\delta} \, \e^{-\delta \, t \, \e^\th}$ provided we fix $\delta \le \min\{2c'' - c_3, \frac{1}{3c_3}\}$.
\end{proof}

We finally prove \Cref{cor2:SHF,th:mainSHE}.

\begin{proof}[Proof of \Cref{cor2:SHF}]
  Recalling \eqref{eq:SHF-rescaled} we may write, by a change of variables,
  \begin{equation} 
  \label{eq:hatscaling}
    \hat{\mathscr{Z}}_t^{\th,c}(\varphi) 
    = \mathscr{Z}_t^{\th}(\hat \varphi)\qquad \text{with} \quad\hat\varphi(x) 
    = R^{-2} \, \varphi(R^{-1} x) \,, \quad R 
    = \e^{c \, t \, \e^\th} \sqrt{t} \,.
  \end{equation}
  By an approximation argument, we may assume that \(\varphi\) is bounded and belongs to \(\cM_1(1)\), see \eqref{eq:probab-density},
  hence \(\hat\varphi \in \cM_1(R)\). For $c < c_0$ we can then apply \Cref{th:mainSHF} to get $\EE[\mathscr{Z}_t^{\th}(\hat \varphi) \wedge 1] \to 0$ as $t \to \infty$ or $\th \to \infty$, hence $\mathscr{Z}_t^{\th}(\hat \varphi) \to 0$ in distribution, which proves the first line of \eqref{eq:SHF-fixed} with $c' = c_0$.

  Next, we observe that, since $\EE[\hat{\mathscr{Z}}_t^{\th,c}(\varphi)] = 1$, we can write by \eqref{eq:hatscaling}
  \[
    \EE\bigl[ (\hat{\mathscr{Z}}_t^{\th,c}(\varphi) - 1)^2 \bigr]
    = \Var\bigl[\hat{\mathscr{Z}}_t^{\th,c}(\varphi)\bigr] 
    = \Var\bigl[\mathscr{Z}_t^{\th}(\hat \varphi)\bigr]
    \le \pi^2 \, \norm{\varphi}_\infty^2 \, \Var\bigl[\mathscr{Z}_t^{\th}(\mathcal{U}_{R})\bigr] \,,
  \]
  where we simply bounded \(\hat \varphi(x) \leq \pi \, \norm{\varphi}_\infty \, \mathcal{U}_{R}(x)\), see \eqref{eq:unif-dist}. Applying \eqref{eq:var-shf-unif} we then get
  \[
    \EE\bigl[ (\hat{\mathscr{Z}}_t^{\th,c}(\varphi) - 1)^2 \bigr]
    \le c_3 \, \pi^2 \, \norm{\varphi}_\infty^2 \, \frac{\exp\bigl(c_3 \, t \, \e^{\th} \bigr)}{(\e^{c \, t \, \e^\th})^2} \,,
  \]
  hence the second line of \eqref{eq:SHF-fixed} holds with $c'' =  c_3$.
\end{proof}

\begin{proof} [Proof of \Cref{th:mainSHE}]
  The proof is similar to that of \Cref{cor2:SHF}. 
  We first treat $t=1$.
  The general case follows in the same way, using \eqref{eq:thNt} to replace \(\th(N,\beta)\) by \(\th(\lfloor Nt\rfloor,\beta)\). 
  Recalling \eqref{eq:SHE-exp-rescaled} and \eqref{eq:SHE-rescaled}, as well as \eqref{eq:Zf} and \eqref{eq:varphiresc}, we have the identity (in distribution, since the time-reversed environment has the same law as the original i.i.d. environment)
  \[
    \int_{\R^2} \varphi(x) \, \hat{u}_{N}^{\beta,c}(1,x) \, \dd x 
    \overset{\mathrm{d}}{=} Z_{N}^{\beta,\w}(\hat{f}_N) \qquad \text{with} \quad \hat{f}_N 
    = \varphi^{(\lfloor(\rho_{N}^{\beta,c})^2 \, N\rfloor)} \,, \quad \rho_{N}^{\beta,c} 
    = \e^{c \, \e^{\th(N,\beta)}} \,.
  \]
  For $\varphi \in \cM_1(1)$, see \eqref{eq:probab-density}, we have \(\hat{f}_N \in \cM_1^{\text{disc}}(\rho_{N}^{\beta,c} \sqrt{N}) = \cM_1^{\text{disc}}(\e^{c \, \e^{\th(N,\beta)}} \sqrt{N})\), see \eqref{eq:disc-density}. 
  Applying \Cref{thm:quantitative}, for $c < c_0$ we get $Z_{N}^{\beta,\w}(\hat{f}_N) \to 0$ in distribution as $N\to\infty$, which proves the first line of \eqref{eq:mainSHE} with $c' = c_0$. 
  We next bound, by \eqref{eq:ub2mom-unif},
  \[
    \EE \biggl[ \bigg( \int_{\R^2} \varphi(x) \, \hat{u}_{N}^{\beta,c}(1,x) - 1 \bigg)^2 \biggr] 
    = \Var\bigl[ Z_{N}^{\beta,\w}(\hat{f}_N) \bigr] 
    \le C \,\Var\bigl[Z_N^{\beta, \w}(\mathcal{U}^{\mathrm{disc}}_{\rho_{N}^{\beta,c}\sqrt{N}}) \bigr]  
    \leq C'\, \frac{\exp\bigl(c_3 \, \e^{\th(N,\beta)} \bigr)}{(\rho_{N}^{\beta,c})^2} \,,
  \]
which yields the second line of \eqref{eq:mainSHE} with $c'' = c_3$.
\end{proof}

\begin{appendix}

\section{The coarse-graining procedure}
\label{sec:coarse}

In this section, we prove \Cref{prop:coarse}, which we recall is a \textit{finite-volume criterion} showing that a small fractional moment at one time scale yields exponential decay of the partition function at larger time scales. 
We recall the definition \eqref{eq:disc-density} of the family \(\cM_1^{\disc}(r)\), where we replace for convenience \(\abs{\,\cdot\,}\) with \(\abs{\,\cdot\,}_\infty\).

\begin{proof}[Proof of \Cref{prop:coarse}]
  Recall that we assume that \(L\in \N\) and \(\beta\in (0,1)\) are such that
  \begin{equation}
  \label{eq:for-coarse-graining}
    \sup_{f \in \cM_1^{\disc}(\sqrt{L})} \EE\bigl[Z_{L}^{\beta,\w}(f)^{1/2}\bigr] 
    \le\frac{1}{113} \,.
  \end{equation}
  We will prove the result~\eqref{eq:NL} only when \(N\) is an integer multiple of \(L\), that is, \(N=mL\) for some \(m\in \N\); 
  the general case \(N\geq L\) follows easily by monotonicity in \(N\).
  We also assume for simplicity that \(\sqrt{L}\) is an integer.

  For any integers \(s<t\), for any probability measure \(\mu\) on \(\Z^2\) and any \(B\subset \Z^2\), let us introduce the notation
  \[
    Z_{s,t}^{\beta,\w} (\mu ; B)  
    \coloneqq \E_{\mu}\Bigl[ \exp\Bigl(\sum_{n=s+1}^{t} (\beta \w(n,S_n) -\lambda(\beta)) \Bigr) \indic_{\{S_{t} \in B\}} \Bigr] \,,
  \]
  which is the partition function of a polymer with initial distribution \(\mu\) at time \(s\) and constrained to end in \(B\) at time \(t\).
  We also denote \(Z_{s,t}^{\beta,\w} (x,y)\) when \(\mu\) is a Dirac mass at \(x\) and \(B\) is reduced to the set \(\{y\}\).

  Then, for a ``skeleton'' \(\mathcal{Y} = (y_i)_{i\geq 1} \in (\Z^2)^{\N}\), we define a \(\sqrt{L}\)-scale coarse-grained partition function starting from \(f\in \cM_1^{\disc}(\sqrt{L})\) and with skeleton~\(\mathcal{Y}\),  by setting for \(m\in\N\)
  \[
    Z_{mL}^{\beta,\w}(f ; \mathcal{Y}) 
    = \sum_{x_0 \in B(0)} f(x_0) \sum_{x_1 \in B(y_1)} \cdots \sum_{x_m \in B(y_m)}  \prod_{j=1}^m Z_{(j-1)L,jL}^{\beta,\w}(x_{j-1},x_j) \,,
  \]
  where for simplicity we denoted \(B(y) = B_{\sqrt{L}}(y)\coloneqq 2y\sqrt{L} +\llb -\sqrt{L},\sqrt{L}\llb^2\) the (half open) \(L^{\infty}\) ball centered at \(2 y\sqrt{L}\) of radius \(\sqrt{L}\), in such a way that \((B(y))_{y\in \Z^2}\) is a partition of \(\Z^2\).
  Note also that we have used the Markov property to write the partition function constrained to visit the \(x_i\)'s as a product of point-to-point partition functions.

  Using the standard inequality \((\sum_i z_i)^{1/2} \leq \sum_i z_i^{1/2}\) for non-negative \((z_i)\), we then get that for any \(m\in \N\),
  \begin{equation}
  \label{eq:uppercoarse}
    Z_{m L}^{\beta, \w}(f)^{1/2} 
    =\Bigl( \sum_{ (y_1,\ldots, y_m) \in (\Z^2)^m} Z_{m L}^{\beta,\w}(f;\mathcal{Y})  \Bigr)^{1/2} 
    \leq \sum_{(y_1,\ldots, y_m) \in (\Z^2)^m} Z_{m L}^{\beta,\w}(f;\mathcal{Y})^{1/2} \,,
  \end{equation}
  so that we are reduced to estimating a fractional moment along a skeleton \(\cY\).

  Now, let us stress that we have some coarse-grained product structure for \(Z_{k L}^{\beta,\w}(f;\mathcal{Y})\).
  Indeed, we can write 
  \[
  Z_{(k+1) L}^{\beta,\w}(f;\mathcal{Y}) 
  = Z_{k L}^{\beta,\w}(f;\mathcal{Y}) \; Z_{k L,(k+1) L}^{\beta,\w}\bigl(\mu_{k,f,\cY}^{\beta,\w} ; B(y_{k+1}) \bigr)\,,
  \]
  where \(\mu_{k,f,\cY}^{\beta,\w}\) is the ``\(\cY\)-skeleton polymer'' probability distribution, supported on \(B(y_{k})\), given by
  \[
    \mu_{k,f,\cY}^{\beta,\w}(x) 
    \coloneqq \frac{1}{Z_{kL}^{\beta,\w}(f;\mathcal{Y})} \sum_{x_0 \in B(0)} f(x_0) \sum_{x_1 \in B(y_1)} \cdots \sum_{x_{k-1} \in B(y_{k-1})}  \prod_{j=1}^k Z_{(j-1)L,jL}^{\beta,\w}(x_{j-1},x_j) \indic_{\{x_k=x\}} \,.
  \]
  Therefore, taking the conditional expectation with respect to \(\cF_{kL} = \sigma(\w(n,z) \colon n \le k L, z\in\Z^2)\) and using that \(\mu_{k,f,\cY}^{\beta,\w}\) is \(\cF_{kL}\)-measurable, we get that 
  \[
    \EE\Bigl[ Z_{(k+1)L}^{\beta,\w}(f;\mathcal{Y})^{1/2} \; \Big| \; \cF_{kL} \Bigr]
    \leq  Z_{kL}^{\beta,\w}(f;\mathcal{Y})^{1/2}\cdot  \sup_{\mu \colon  \mathrm{supp}(\mu) \subset B(y_k)} \EE\Bigl[ Z_{k L,(k+1)L}^{\beta,\w}\bigl(\mu ; B(y_{k+1}) \bigr)^{1/2} \Bigr] \,,
  \]
  where in the supremum \(\mu\) is a probability distribution.
  Therefore, if we define
  \[
    \mathcal{Q}(y) 
    \coloneqq \sup_{\mu \colon  \mathrm{supp}(\mu) \subset B(0)} \EE\Bigl[  Z_{0,L}^{\beta,\w}\bigl(\mu ;  B(y) \bigr)^{1/2} \Bigr] \,,
  \]
  then by translation invariance we get by iteration that 
  \[
  \sup_{f \in \cM_1^{\disc}(\sqrt{L})} \EE\Bigl[Z_{m L}^{\beta, w}(f;\mathcal{Y})^{1/2} \Bigr] 
  \leq  \prod_{i=1}^m \mathcal{Q}(y_i-y_{i-1}) \,.
  \]
  Therefore, plugged into~\eqref{eq:uppercoarse} we get that 
  \[
    \EE\bigl[Z_{m L}^{\beta, w}(f)^{1/2}\bigr] 
    \leq \sum_{(y_1,\ldots, y_m) \in (\Z^2)^m} \prod_{i=1}^m \mathcal{Q}(y_i-y_{i-1}) 
    = \Bigl( \sum_{y \in \Z^2} \mathcal{Q}(y) \Bigr)^m \,.
  \]
  It thus only remains to show that under~\eqref{eq:for-coarse-graining} we have that \(\sum_{y \in \Z^2} \mathcal{Q}(y) \leq \e^{-1}\).

  First of all, we always have that \(\mathcal{Q}(y) \leq \frac{1}{113}\), thanks to \eqref{eq:for-coarse-graining}.
  On the other hand, simply applying Jensen's inequality, we have that
  \[
  \EE\bigl[  Z_{0,L}^{\beta,\w}\bigl(\mu ;  B(y) \bigr)^{1/2} \bigr] 
  \leq \sqrt{\sum_{ x \in B(0)} \mu(x) \P_x\bigl(S_{L} \in B(y)\bigr)}
  \leq \sqrt{\P\Bigl(S_{L} \in 2y \sqrt{L} + \llb -2\sqrt{L},2\sqrt{L}\llb^2 \Bigr)} \, ,
  \]
  where we have widened the ball around \(2y\sqrt{L}\) by \(\sqrt{L}\) to account for the worst case scenario for the starting point \(x\in B(0) = \llb -\sqrt{L},\sqrt{L}\llb^2\).
  Now, notice that \((\pm S_n^{(1)}\pm S_n^{(2)})_{n\geq 0}\) are standard simple random walks in dimension 1, so that
  \[
  \P\Bigl(S_{L} \in 2y \sqrt{L} + \llb -2\sqrt{L}, 2\sqrt{L}\llb^2 \Bigr) 
  \leq \P \Bigl( \mathrm{SRW}_{L} \geq (2\abs{y}_1-4)\sqrt{L} \Bigr) 
  \leq \e^{- 2 (\abs{y}_1-2)^2} \,,
  \]
  where the last inequality is standard.

  Therefore, for any integer threshold \(K\ge 1\), we obtain that 
  \[
    \sum_{y \in \Z^2} \mathcal{Q}(y) 
    \leq \sum_{\abs{y}_1 \leq K} \frac{1}{113} + \sum_{\abs{y}_1 > K} \sqrt{\e^{- 2(\abs{y}_1-2)^2}}
    = (2 K^2 + 2K+1) \cdot \frac{1}{113} + \sum_{r >K} 4r \e^{- (r-2)^2}\,.
  \]
  Now, it turns out that for \(K=4\) the first term is~\(\frac{41}{113} \approx 0.3628\) and the second is \(\approx 0.0025\), with the sum of the two being smaller than \(0.366<\e^{-1}\).
  This concludes the proof.
\end{proof}

\section{Lower bound on the free energy}
\label{app:lower-free-energy}

We now prove the lower bound in \eqref{eq:free-energy-bounds} from \Cref{thm:freeenergy}, using the same strategy as in~\cite{BL17}.
The idea is to start from the superadditivity of \(\EE[\log Z_{N}^{\beta,\w}]\), which gives that 
\[
  \tf(\beta) 
  = \sup_{N\geq 1} \frac{1}{N} \EE\bigl[ \log Z_{N}^{\beta,\w} \bigr] \,,
\]
see e.g.\ \cite[Theorem 2.1]{Com17}.

Let \(N_c = N_c(\beta)\coloneqq\min\{N\ge 2:\sigma^2(\beta)R_{N}\ge 1\}\), that is, 
\begin{equation}
  \label{eq:N-critical}
  \sigma^2(\beta) R_{N_c-1} < 1 \le \sigma^2(\beta) R_{N_c} \,.
\end{equation}
Since \(\pi R_N=\log N+\alpha+o(1)\) as \(N \to \infty\), this implies \(\log N_c=\frac{\pi}{\sigma^2(\beta)} - \alpha + o(1)\),
as \(\beta \downarrow 0\), which yields 
\[
  \tf(\beta) 
  \geq \frac{1}{N_c} \EE\bigl[\log Z_{N_c}^{\beta,\w}\bigr] 
  \geq  C\, \e^{-\frac{\pi}{\sigma^2(\beta)}} \,\EE\bigl[\log Z_{N_c}^{\beta,\w}\bigr] \,.
\]
It remains to prove the following lemma, which provides a lower bound on \(\EE[\log Z_{N}^{\beta,\w}]\) near criticality.

\begin{lemma}
\label{lem:ElogZ}
  Fix \(\th\ge0\). There exists a constant \(C_\th>0\) such that, for all \(N\ge2\) and all \(\beta\in(0,1)\) satisfying \(\sigma^2(\beta)R_N\le \exp\bigl(\frac{\th}{\log N}\bigr)\), we have
  \[
    \EE[\log Z_{N}^{\beta,\w}] 
    \geq - C_\th (\log N)^{4} \,.
  \]
\end{lemma}

We then want to apply this lemma to \(N= N_c =N_c(\beta)\) defined above.
By the definition~\eqref{eq:N-critical}, we have \(\sigma^2(\beta) R_{N_c} -1\le \sigma^2(\beta) R_{N_c} - \sigma^2(\beta) R_{N_c-1}= \sigma^2(\beta) u(N_c)\), so that 
\[
\sigma^2(\beta) R_{N_c} \leq 1+\frac{\sigma^2(\beta)}{\pi N_c} \leq 1+\frac{1}{\log N_c} \le\exp\Bigl(\frac{1}{\log N_c}\Bigr) 
\]
for all sufficiently small \(\beta\).
We can thus apply \Cref{lem:ElogZ} with \(\th=1\), which concludes the proof of the lower bound in \Cref{thm:freeenergy}.

\begin{remark}
  The bound in \Cref{lem:ElogZ} is not expected to be optimal.
  In particular, we expect that \(\EE[\log Z_{N_c}^{\beta,\w}] \sim - \frac12 \log\log N_c\) in view of \cite{GT26}.
  Combined with superadditivity, this would give a lower bound of order \(-\log(\frac{1}{\sigma^2(\beta)})\exp(-\frac{\pi}{\sigma^2(\beta)})\) for the free energy.
  However, we still expect the upper bound in \Cref{thm:freeenergy} to provide the correct asymptotics.
\end{remark}

\begin{proof}[Proof of \Cref{lem:ElogZ}]
  The proof relies on concentration inequalities for the left tail of \(\log Z_{N}^{\beta}\).
  We use the following concentration inequality from \cite[Prop.~3.4]{CTT17}.
  \begin{proposition}
  \label{prop:concentration}
    Assume that the environment is bounded, that is, \(\abs{\w}\leq K\), and let \(f\) be a convex function. 
    Then, there exists some constant \(c>0\) such that for any \(a, M\) and \(t>0\), we have 
    \[
      \PP\bigl( f(\w) \geq a ; \abs{\nabla f} \leq M \bigr) \, \PP\bigl( f(\w) \leq a-t \bigr) 
      \leq 2 \e^{ - c \frac{t^2}{K^2 M^2}} \,.
    \]
  \end{proposition}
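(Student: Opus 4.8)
\smallskip
\noindent\emph{Proof idea for \Cref{prop:concentration}.} This is a form of Talagrand's convex‑distance concentration inequality, and the plan is to deduce it from Talagrand's inequality on product spaces. Working on $\Omega=\prod_i[-K,K]$ (each coordinate of $\w$ ranging over an interval of length $2K$), recall the convex distance
\[
  d_T(x,A)\;=\;\sup_{\norm{\alpha}_2\le 1}\,\inf_{y\in A}\,\sum_i\abs{\alpha_i}\,\indic_{x_i\ne y_i}\,,
\]
and Talagrand's inequality $\PP(A)\,\EE_\PP\bigl[\e^{d_T(\cdot,A)^2/4}\bigr]\le 1$, which by Markov's inequality yields the deviation bound $\PP\bigl(d_T(\cdot,A)\ge r\bigr)\le\PP(A)^{-1}\e^{-r^2/4}$ for every $r\ge 0$. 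I would apply this with $A\coloneqq\{\,f\ge a,\ \abs{\nabla f}\le M\,\}$, where $\nabla f$ is the (a.e.\ defined) subgradient of the convex function $f$, of Euclidean norm at most $M$ on $A$.

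The core step is to show that $f(x)\le a-t$ forces $d_T(x,A)\ge c_0\,t/(KM)$ for a universal constant $c_0>0$. The starting point is that for any $y\in A$, convexity gives $f(x)\ge f(y)+\langle\nabla f(y),x-y\rangle$, hence $\langle\nabla f(y),y-x\rangle\ge f(y)-f(x)\ge t$; since $\abs{y_i-x_i}\le 2K$, the $i$‑th summand vanishes when $y_i=x_i$, and $\norm{\nabla f(y)}_2\le M$, this already forces $y$ to disagree with $x$ in many coordinates (weighted by $(\partial_i f(y))^2/M^2$). Turning this one‑sided, gradient‑weighted separation into the required lower bound on $d_T(x,A)$ is exactly the content of Talagrand's induction on the number of coordinates. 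Granting it, combining with the deviation bound gives $\PP(A)\,\PP(f\le a-t)\le\PP(A)\,\PP\bigl(d_T(\cdot,A)\ge c_0\,t/(KM)\bigr)\le\e^{-c_0^2 t^2/(4K^2M^2)}$, which is the stated inequality — the factor $2$ is slack, absorbing in particular the passage from finitely many to countably many coordinates by monotone convergence and the minor adjustments needed when $f$ is defined only on $\Omega$.

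The main obstacle is precisely the separation step: extracting the \emph{quadratic} rate $t^2/(K^2M^2)$ for the convex distance from the \emph{linear} bounds $\langle\nabla f(y),y-x\rangle\ge t$ and $\norm{\nabla f(y)}_2\le M$. This is where convexity is genuinely used — a naive Hamming/minimax estimate for $d_T$ is dimension‑dependent and does not suffice — and it requires Talagrand's induction (or an equivalent exponential‑moment/entropy argument). Everything else is bookkeeping; since this is a classical statement, in the paper we simply invoke \cite[Prop.~3.4]{CTT17}.
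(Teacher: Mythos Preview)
The paper does not prove this proposition at all: it simply quotes it as \cite[Prop.~3.4]{CTT17} and uses it as a black box. So there is no ``paper's own proof'' to compare against, and your closing remark that the paper merely invokes the reference is exactly right.

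That said, your sketch has a genuine wrinkle worth fixing. You apply Talagrand's inequality to the set $A=\{f\ge a,\ \abs{\nabla f}\le M\}$ and then try to show that any $x$ with $f(x)\le a-t$ satisfies $d_T(x,A)\ge c_0\,t/(KM)$. As you correctly notice, the linear bound $\langle\nabla f(y),y-x\rangle\ge t$ uses a weight vector $\alpha=\nabla f(y)/M$ that depends on $y$, so it does \emph{not} directly lower-bound $\sup_\alpha\inf_{y\in A}$. You then attribute the missing step to ``Talagrand's induction on the number of coordinates'', but that induction is what proves the concentration inequality $\PP(A)\,\EE[\e^{d_T^2/4}]\le 1$ itself; it does not supply the geometric separation. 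With $A$ non-convex (a super-level set intersected with a gradient constraint), the usual barycenter/minimax argument is not available, and I do not see how to close the gap along those lines.

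The standard route (and the one in \cite{CTT17}) swaps the roles: apply Talagrand's inequality to the \emph{convex} sublevel set $B\coloneqq\{f\le a-t\}$. For $x\in A$ one uses the minimax form of $d_T$: there is a probability measure $\nu$ on $B$ with $d_T(x,B)=\bigl(\sum_i\nu(y_i\ne x_i)^2\bigr)^{1/2}$, and its barycenter $z=\int y\,\de\nu(y)$ lies in $B$ by convexity. Convexity of $f$ at $x$ (where the gradient bound $\abs{\nabla f(x)}\le M$ is given) yields
\[
  t\le f(x)-f(z)\le\langle\nabla f(x),x-z\rangle\le 2K\sum_i\abs{\partial_i f(x)}\,\nu(y_i\ne x_i)\le 2KM\,d_T(x,B)\,,
\]
so $d_T(x,B)\ge t/(2KM)$ on all of $A$. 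Plugging into $\PP(B)\,\EE[\e^{d_T(\cdot,B)^2/4}]\le 1$ gives $\PP(A)\,\PP(B)\le\e^{-t^2/(16K^2M^2)}$, which is the statement. The point is that the gradient bound is assumed on $A$, so you should linearise at $x\in A$ and compare to the convex set $B$, not the other way round.
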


  We will apply this result to \(\log Z_{N}^{\beta,\w}\), which is a convex function in \(\w\), whose norm of the gradient is given by
  \[
    \abs[\big]{\nabla\log Z_{N}^{\beta,\w}}^2 
    = \sum_{n=1}^{N} \sum_{\abs{x} \leq n} \Bigl(\frac{\partial}{\partial \w_{n,x}} \log Z_{N}^{\beta,\w} \Bigr)^2 \,. 
  \]

  Our first lemma controls the first factor in \Cref{prop:concentration}.
  \begin{lemma}
  \label{lem:firstpart}
    Assume that \(\sigma^2(\beta) R_N \leq \e^{\frac{\th}{\log N}}\) for some \(\th \in \R\).
  Then, there is a constant \(C=C(\th)>0\) such that  
  \[
    \PP\Bigl(\log Z_{N}^{\beta,\w} \geq -1 ;\abs[\big]{\nabla\log Z_{N}^{\beta,\w}}^2 \le C (\log N)^3\Bigr) 
    \geq \frac{1}{C \log N} \,.
  \]
  \end{lemma}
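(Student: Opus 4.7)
The plan is to derive both bounds from explicit second-moment calculations at the critical point \(\sigma^2(\beta)R_N=1\). First, for the lower tail of \(Z_N^\beta\), the chaos expansion (exactly as in the proof of \Cref{prop:second-moment-DP}) yields
\begin{equation*}
\EE\bigl[(Z_N^\beta)^2\bigr] = \sum_{k\ge 0} \bigl(\sigma^2(\beta)R_N\bigr)^k \, \mathrm{P}\bigl(\tau_k^{(N)}\le N\bigr) = \sum_{k\ge 0} \mathrm{P}\bigl(\tau_k^{(N)}\le N\bigr) \le C\log N,
\end{equation*}
using the Dickman-type renewal estimates from \cite{CSZ19-Dickman}: the probability \(\mathrm{P}(\tau_k^{(N)}\le N)\) stays of order one for \(k\lesssim \log N\) and decays rapidly for \(k\gg \log N\). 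Combined with \(\EE[Z_N^\beta]=1\), the Paley--Zygmund inequality then gives \(\PP(Z_N^\beta\ge \e^{-1})\ge (1-\e^{-1})^2/\EE[(Z_N^\beta)^2]\ge c/\log N\).

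Next, for the gradient, I would introduce two independent simple random walks \(S,S'\) and set \(L_N(S,S')\coloneqq \sum_{n=1}^N \indic_{S_n=S'_n}\), so that a direct computation (using that the gradient in \(\omega(n,x)\) is \(\beta\,\E[\e^{H_N}\indic_{S_n=x}]\)) gives
\begin{equation*}
\EE\bigl[|\nabla Z_N^\beta|^2\bigr] = \beta^2 \, \E^{\otimes 2}\Bigl[L_N(S,S')\prod_{m=1}^N \bigl(1+\sigma^2(\beta)\indic_{S_m=S'_m}\bigr)\Bigr].
\end{equation*}
Expanding the product as a chaos sum and writing each intersection set \(J\ni m\) as \(I\cup\{m\}\) (with contributions coming from \(I=J\) and \(I=J\setminus\{m\}\)), one finds
\begin{equation*}
\EE\bigl[|\nabla Z_N^\beta|^2\bigr] = \beta^2\bigl(1+\sigma^{-2}(\beta)\bigr)\sum_{k\ge 1} k\,\bigl(\sigma^2(\beta) R_N\bigr)^k \, \mathrm{P}\bigl(\tau_k^{(N)}\le N\bigr) \le C' \sum_{k\ge 1} k\,\mathrm{P}\bigl(\tau_k^{(N)}\le N\bigr) \le C''(\log N)^2,
\end{equation*}
using that \(\beta^2(1+\sigma^{-2}(\beta))=O(1)\) at criticality and the same renewal estimates, with the extra factor \(k\) contributing an additional \(\log N\).

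Finally, on the event \(\{Z_N^\beta\ge \e^{-1}\}\) we have \(|\nabla\log Z_N^\beta|^2 = |\nabla Z_N^\beta|^2/(Z_N^\beta)^2 \le \e^2|\nabla Z_N^\beta|^2\), so Markov's inequality gives
\begin{equation*}
\PP\Bigl(|\nabla\log Z_N^\beta|^2>M(\log N)^3,\ Z_N^\beta\ge \e^{-1}\Bigr) \le \frac{\e^2 C''}{M\log N}.
\end{equation*}
Choosing \(M\) large enough that \(\e^2 C''/M\le c/2\) and intersecting with the Paley--Zygmund event yields the claim with \(C=\max(M,2/c)\). The hard part will be the estimate \(\sum_{k} k\,\mathrm{P}(\tau_k^{(N)}\le N)\le C(\log N)^2\): the factor \(k\) rules out the trivial termwise bound \(\mathrm{P}(\tau_k^{(N)}\le N)\le 1\), and one must exploit the Dickman-type transition of \(\tau_k^{(N)}\) around \(k\asymp \log N\). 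This can either be invoked directly from \cite[Proposition~3.2]{CSZ19-Dickman} or established along the same Chernoff-plus-Tauberian lines (via \Cref{lem:softthreshold}) as in the proof of \Cref{prop:second-moment-DP}.
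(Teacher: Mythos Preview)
Your proposal is correct and follows essentially the same route as the paper: Paley--Zygmund combined with the second-moment bound $\EE[(Z_N^\beta)^2]\le C\log N$ for the first event, and for the gradient the identity $|\nabla\log Z_N^\beta|^2=\beta^2(Z_N^\beta)^{-2}\E^{\otimes 2}[L_N\,\e^{\lambda_2(\beta)L_N}]$ bounded on $\{Z_N^\beta\ge\e^{-1}\}$ and then controlled via the chaos expansion you describe, which the paper states separately as a claim and bounds using \cite[Lemma~5.4]{CSZ19-Dickman}. Your computation $(1+\sigma^{-2}(\beta))\sum_k k(\sigma^2(\beta)R_N)^k\mathrm{P}(\tau_k^{(N)}\le N)$ is exactly the one the paper carries out, and the suggested Dickman-subordinator/Chernoff route for the key sum $\sum_k k\,\mathrm{P}(\tau_k^{(N)}\le N)\le C(\log N)^2$ is precisely what the paper invokes.
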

  Then, applying \Cref{prop:concentration} with \(a=-1\) and \(M= \sqrt{C} (\log N)^{3/2}\), we get that for a bounded environment \(\abs{\w}\leq K\), 
  \[
    \PP\bigl(  \log Z_{N}^{\beta,\w} \leq -1 -t \bigr) 
    \leq 2 C \log N \, \e^{ - \frac{c}{C} \frac{t^2}{K^2 (\log N)^3}} \,.
  \]
We can in fact reduce to a bounded environment with a large constant \(K = (\log N)^{3/2}\): define \(\tilde \w_{n,x} = \w_{n,x} \indic_{\{\abs{\w_{n,x}} \leq (\log N)^{3/2}\}}\), and note that 
\[
\PP\bigl( \exists  n \in \llbracket 1, N \rrbracket , \abs{x} \leq n  \text{ such that } \tilde \w_{n,x} \neq \w_{n,x} \bigr) 
\leq 
9 N^3\, \PP\bigl(\abs{\w} \geq (\log N)^{3/2}\bigr) \leq 9 N^3\, \e^{- c_0 (\log N)^{3/2}} \,.
\]
Therefore,
\begin{equation*}
  \PP\bigl(  \log Z_{N}^{\beta,\w} \leq -1 -t \bigr)
   \leq \PP\bigl(  \log Z_{N}^{\beta,\tilde\w} \leq -1 -t \bigr) + 9 N^3\, \e^{- c_0 (\log N)^{3/2}} \,.
\end{equation*}
Note that setting \(\tilde \lambda(\beta) = \log \EE[e^{\beta \tilde \w}]\) and \(\tilde \sigma^2(\beta) = \e^{\tilde\lambda(2\beta)-2\tilde\lambda(\beta)}\), we can check that for \(\beta\in (0,1)\) we have \(\tilde \lambda(\beta) = \lambda(\beta) + O(\e^{-c (\log N)^{3/2}})\) and \(\tilde \sigma^2(\beta) = \sigma^2(\beta) +O(\e^{-c (\log N)^{3/2}})\).
In particular we can harmlessly replace \(\lambda(\beta)\) by \(\tilde \lambda(\beta)\) in \(Z_{N}^{\beta,\tilde\w}\), to which we can then apply \Cref{lem:firstpart}, say with \(1+\th\) instead of \(\th\).
Applying \Cref{prop:concentration} with \(K= (\log N)^{3/2}\), \(a=-1\), \(M = \sqrt{C} (\log N)^{3/2}\), we end up with
\begin{equation*}
  \PP\bigl(  \log Z_{N}^{\beta,\w} \leq -1 -t \bigr) \leq 2 C \log N \e^{ - \frac{c}{C} \frac{t^2}{(\log N)^6}} +  9 N^3\, \e^{- c_0 (\log N)^{3/2}}\,.
\end{equation*}

Then, using that \(-\EE[ \log Z_N^{\beta,\w}] \leq 1+ \int_1^{\infty} \PP( - \log Z_{N}^{\beta} \geq  u ) \dd u \), we can split the integral into two parts.
The first part is
\[
\int_1^{N^2} \PP( \log Z_{N}^{\beta} \leq -1 - u ) \dd u \leq C' (\log N)^{4} + 9 N^5 \e^{- c_0 (\log N)^{3/2}}
\]
where we have used the upper bound on the left tail of \(\log Z_N^{\beta,\w}\) found above.
For the remaining part, we use a very rough bound: writing \(\sum_{n=1}^N \beta \w_{n,S_n} \geq \beta N  \min_{n \in \llb 1, N\rrb, \abs{x} \leq N}\{\w_{n,x}\}\), we get that for \(u\geq 2\lambda(\beta) N \)
\[
\begin{split}
\PP( \log Z_{N}^{\beta} \leq - u )&  \leq \PP\Bigl( \beta N  \min_{n \in \llb 1, N\rrb, \abs{x} \leq N}\{\w_{n,x}\}  - \lambda(\beta) N  \leq -u \Bigr) \\
& \leq \PP \Bigl( \min_{n \in \llb 1, N\rrb, \abs{x} \leq N}\{\w_{n,x}\}   \leq - \frac{1}{2} \frac{u}{\beta N} \Bigr) \leq 9 N^3\e^{- c_0 \frac{u}{2\beta N}}\,.
\end{split}
\]
Thus, the second part of the integral \(\int_{N^2}^{\infty} \PP( \log Z_{N}^{\beta} \leq -1 - u ) \dd u \) is bounded by \(c \beta N^4 \e^{- c_0 N/2\beta} \), which is negligible compared to the first term.
This concludes the proof of \Cref{lem:ElogZ}.
\end{proof}

\begin{proof}[Proof of \Cref{lem:firstpart}]
  First of all, let us write
  \[
  \begin{split}
    \PP\bigl(  \log Z_{N}^{\beta,\w} \geq -1 &; \abs[\big]{\nabla \log Z_{N}^{\beta,\w}}^2 \le C (\log N)^3\bigr)\\
    &= \PP\bigl( Z_{N}^{\beta,\w} \geq \e^{-1} \bigr) - \PP\bigl(Z_{N}^{\beta,\w} \geq \e^{-1} ; \abs[\big]{\nabla\log Z_{N}^{\beta,\w}}^2 > C (\log N)^3\bigr).
  \end{split}
  \]
  For the first term, we use Paley--Zygmund inequality to get that 
  \[
    \PP(  Z_{N}^{\beta,\w} \geq \e^{-1}) 
    \geq (1-\e^{-1})^{2} \frac{1}{ \EE\bigl[ (Z_N^{\beta,\w})^2\bigr]} 
    \geq \frac{c}{ \log N} \,,
  \]
  where we have used that, in the critical window, \(\EE[ (Z_N^{\beta,\w})^2] \leq c \log N\) for some constant \(c=c(\th)\).
  For the second term, a direct calculation gives that 
  \[
    \abs[\big]{\nabla \log Z_{N}^{\beta,\w}}^2 
    = \frac{\beta^2}{(Z_{N}^{\beta,\w})^2} \E^{\otimes 2} \biggl[ \sum_{n=1}^N \indic_{\{S_n =\tilde S_n\}} \e^{\sum_{n=1}^N \beta (\w_{n,S_n} + \w_{n, \tilde S_n}) - 2\lambda(\beta) } \biggr] \,.
  \]
  Bounding \(\frac{1}{(Z_{N}^{\beta,\w})^2} \leq \e^2\) on the event \(Z_{N}^{\beta,\w} \geq \e^{-1}\), we get that, applying also Markov's inequality 
  \[
  \begin{split}
    \PP\Bigl(  Z_{N}^{\beta,\w} \geq \e^{-1} ; \abs[\big]{\nabla\log Z_{N}^{\beta,\w}}^2 &> C (\log N)^3\Bigr)\\
    & \leq \frac{\e^2}{C( \log N)^3} \E^{\otimes 2} \biggl[ \beta^2 \sum_{n=1}^N \indic_{\{S_n =\tilde S_n\}} \e^{ \lambda_2(\beta) \sum_{n=1}^N \indic_{\{S_n =\tilde S_n\}}} \biggr],
  \end{split}
  \]
  with \(\lambda_2(\beta) = \lambda(2\beta) -2\lambda(\beta)\).
  Then, we can use that, at criticality, we have the following bound, that we prove below
  \begin{claim}
  \label{claim:LexpL}
    Assume that \(\sigma^2(\beta) R_N  \leq \e^{ \frac{\th}{\log N}}\) for some \(\th\geq 0\).
    Then there is a constant \(C' = C' (\th)\) such that 
  \[
  \E^{\otimes 2} \Bigl[ \beta^2 \sum_{n=1}^N \indic_{\{S_n =\tilde S_n\}} \e^{ \lambda_2(\beta) \sum_{n=1}^N \indic_{\{S_n =\tilde S_n\}}} \Bigr]  \leq C'\, (\log N)^2 \,.
  \]
  \end{claim}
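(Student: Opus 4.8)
The plan is to take the (already performed) expectation over the disorder for granted and to reduce the claim to a purely random-walk estimate which is a minor variant of the one already proved for \Cref{prop:second-moment-DP}. Write $L_N \coloneqq \sum_{n=1}^N \indic_{\{S_n = \tilde S_n\}}$ for the collision local time of the two independent walks $S,\tilde S$, so that the quantity to bound is $\beta^2\,\E^{\otimes 2}\bigl[L_N\,\e^{\lambda_2(\beta)L_N}\bigr]$, and recall that $\lambda_2(\beta)$ is precisely the rate produced by integrating the disorder, i.e.\ $\e^{\lambda_2(\beta)} = 1 + \sigma^2(\beta)$. First I would expand $m z^m = z\sum_{k\ge 1} k\binom{m}{k}(z-1)^{k-1}$ (just differentiate the binomial theorem) with $z=\e^{\lambda_2(\beta)}$, $z-1=\sigma^2(\beta)$ and $m = L_N \le N$; taking $\E^{\otimes 2}$ and using the combinatorial identity $\E^{\otimes 2}\bigl[\binom{L_N}{k}\bigr] = \sum_{I\subseteq\llb 1,N\rrb,\,\abs{I}=k} u(I) = R_N^k\,\mathrm{P}(\tau_k^{(N)}\le N)$, which is exactly \eqref{sum=tau} (with $u(n)=\P(S_{2n}=0)$, via the Markov property of $S-\tilde S$), one gets
\[
  \beta^2\,\E^{\otimes 2}\bigl[L_N\,\e^{\lambda_2(\beta)L_N}\bigr]
  = \beta^2\,\frac{\e^{\lambda_2(\beta)}}{\sigma^2(\beta)}\,\sum_{k\ge 1} k\,\bigl(\sigma^2(\beta)R_N\bigr)^k\,\mathrm{P}\bigl(\tau_k^{(N)}\le N\bigr)\,.
\]
Since $\sigma^2(\beta)\sim\beta^2$ as $\beta\downarrow 0$, the prefactor $\beta^2\e^{\lambda_2(\beta)}/\sigma^2(\beta)$ is bounded (it tends to $1$), so it remains to prove $\sum_{k\ge 1} k\,(\sigma^2(\beta)R_N)^k\,\mathrm{P}(\tau_k^{(N)}\le N) \le C(\log N)^2$.

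For this I would rerun the Chernoff-plus-Tauberian computation in the proof of \Cref{prop:second-moment-DP}. Put $B\coloneqq \sigma^2(\beta)R_N$; the hypothesis $B\le\e^{\th/\log N}$ means $\sigma^2(\beta)^{-1}\ge R_N\,\e^{-\th/\log N} = R_N - \tfrac{\th}{\pi} + o(1)$, so up to replacing $\th$ by $\th+o(1)$ we are exactly in the framework of that proof. Chernoff's bound \eqref{eq:Chernov} gives $\mathrm{P}(\tau_k^{(N)}\le N)\le \e^{\hat\lambda}\mathrm{E}\bigl[\e^{-\frac{\hat\lambda}{N}T^{(N)}}\bigr]^k$, the Tauberian estimate \Cref{lem:softthreshold} together with $\pi R_m = \log m + \alpha + o(1)$ gives $\mathrm{E}\bigl[\e^{-\frac{\hat\lambda}{N}T^{(N)}}\bigr]\le \frac{\pi R_N - \log\hat\lambda - \gamma + o(1)}{\pi R_N}$, and the choice $\hat\lambda = \hat\lambda_k = k/r_N$ with $r_N\coloneqq \pi R_N-\th\sim\log N$ and $x\coloneqq k/r_N$ leads, exactly as there, to $B^k\,\mathrm{P}(\tau_k^{(N)}\le N)\le \e^{\hat\lambda_k}(\cdots)^k\le f(x)$ with the same unimodal function $f(x)=\e^{(1+\th-\gamma+o(1))x}\,x^{-x}$, for which $\sum_{i\ge 1}(i+1)f(i)<\infty$ and $x^{\ast}f(x^{\ast})<\infty$ (finite because $\th$ is fixed).

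Finally, the extra factor $k$ costs just one further power of $r_N$: writing $k = r_N\cdot(k/r_N)$ gives $\sum_{k\ge 1} k\,B^k\,\mathrm{P}(\tau_k^{(N)}\le N)\le r_N\sum_{k\ge 1}\tfrac{k}{r_N}f\bigl(\tfrac{k}{r_N}\bigr)$, and partitioning $k$ into blocks $(i r_N,(i+1)r_N]$ and using unimodality of $f$ exactly as at the end of the proof of \Cref{prop:second-moment-DP} — now with the summable weight $x f(x)$ in place of $f(x)$ — bounds $\tfrac{1}{r_N}\sum_{k\ge 1}\tfrac{k}{r_N}f\bigl(\tfrac{k}{r_N}\bigr)$ by a constant; hence $\sum_{k\ge 1} k\,B^k\,\mathrm{P}(\tau_k^{(N)}\le N)\le C r_N^2\le C'(\log N)^2$, which together with the first paragraph proves \Cref{claim:LexpL}. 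I do not expect any genuine obstacle: the whole argument is a routine adaptation of the proof of \Cref{prop:second-moment-DP}, the only point deserving a line of care being the passage from $\beta^2\E^{\otimes 2}[L_N\e^{\lambda_2(\beta)L_N}]$ to the renewal sum — in particular the identity for $\E^{\otimes 2}[\binom{L_N}{k}]$ and the fact that the relevant base of the sum is $\sigma^2(\beta)R_N$, which stays $\le\e^{o(1)}$ near criticality so that the sum over $k$ indeed converges.
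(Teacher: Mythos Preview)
Your proof is correct and follows essentially the same route as the paper: both expand $L_N(1+\sigma^2(\beta))^{L_N}$ to reduce to the renewal sum $\sum_{k\ge 1} k\,(\sigma^2(\beta)R_N)^k\,\mathrm{P}(\tau_k^{(N)}\le N)$ (the paper's identity $(1+\sigma^2(\beta)^{-1})\sum_k k\,\sigma^2(\beta)^k\sum_I u(I)$ is exactly your formula), then control this sum via a super-exponential tail bound on $\mathrm{P}(\tau_k^{(N)}\le N)$ together with a Riemann-sum argument producing the factor $(\log N)^2$. The only difference is cosmetic: for the tail bound the paper quotes \cite[Lemma~5.4]{CSZ19-Dickman} directly, whereas you re-derive an equivalent estimate by recycling the Chernoff--Tauberian computation from the proof of \Cref{prop:second-moment-DP}.
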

Altogether, this gives that 
\[
 \PP\Bigl(  \log Z_{N}^{\beta,\w} \geq -1 ; \abs{\nabla \log Z_{N}^{\beta,\w}}^2 \leq C (\log N)^3\Bigr) \geq  \frac{c}{\log N} - \frac{\e^2 C'}{C \log N} \geq \frac{c}{ 2 \log N} \,,
\]
provided that we had fixed \(C\) large enough.
\end{proof}

\begin{proof}[Proof of \Cref{claim:LexpL}]
  Recalling that \(\sigma^2(\beta) = \e^{\lambda_2(\beta)}-1\), we can perform the following chaos expansion:
  \begin{equation}
  \label{eq:LexpL}
  \begin{split}
    \E^{\otimes 2}& \Bigl[ \beta^2 \sum_{n=1}^N \indic_{\{S_n =\tilde S_n\}} (1+\sigma^2(\beta))^{  \sum_{n=1}^N \indic_{\{S_n =\tilde S_n\}}} \Bigr]  \\
    & = \beta^2 \sum_{k=0}^{\infty} \sigma^2(\beta)^k \sum_{1\leq n_1 <\cdots <n_k \leq N} \sum_{n=1}^N \P^{\otimes 2}\bigl(S_{n_i} =\tilde S_{n_i} \;\forall i \in \{1,\ldots, k\}, S_n =\tilde S_n \bigr) \,.
  \end{split}
  \end{equation}
  Now, we consider two contributions.
  First, if \(n \in \{n_1,\ldots, n_k\}\), this gives a term
  \[
    \beta^2\sum_{k=0}^{\infty} k \sigma^2(\beta)^k \sum_{1\leq n_1 <\cdots <n_k \leq N}  \prod_{i=1}^k u(n_i-n_{i-1}) \,,
  \]
  where~\(k\) is simply a combinatorial factor due to the choice of index \(i \in \{1,\ldots, k\}\) such that~\(n=n_i\).
  Second, if \(n \notin \{n_1,\ldots, n_k\}\), this gives a term
  \[
    \beta^2\sum_{k=0}^{\infty} (k+1) \sigma^2(\beta)^k \sum_{1\leq n_1 <\cdots <n_{k+1} \leq N}  \prod_{i=1}^{k+1} u(n_i-n_{i-1}) \,,
  \]
  where the combinatorial factor is due to the choice of interval \((n_{i-1},n_i)\) in which~\(n\) falls.
  Altogether, after a change of index for the second term, the left-hand side in \eqref{eq:LexpL} is equal to
  \[
    \beta^2\bigl(1 + \sigma^2(\beta)^{-1} \bigr) \sum_{k=0}^{\infty} k \sigma^2(\beta)^k \sum_{1\leq n_1 <\cdots <n_k \leq N}  \prod_{i=1}^k u(n_i-n_{i-1}) \,.
  \]
  Noticing that \(\beta^2(1 + \sigma^2(\beta)^{-1})\) is bounded by a constant, we therefore focus on sum.
  We use the following upper bound, see \cite[Lemma~5.4]{CSZ19-Dickman}: 
  there is a constant \(c>0\) such that, for every \(k \geq 1\)
  \[
    \frac{1}{(R_N)^k}\sum_{1\leq n_1 <\cdots <n_k \leq N}  \prod_{i=1}^k u(n_i-n_{i-1}) 
    \leq  \e^{- c \frac{k}{ \log N} \log^+(\frac{k}{\log N})} \,,
  \]
  where \(\log^+(x) = \log x\vee 0\).
  With this bound at hand, we get that
  \[
  \begin{split}
    \sum_{k=0}^{\infty} k \sigma^2(\beta)^k \sum_{1\leq n_1 <\cdots <n_k \leq N}  \prod_{i=1}^k u(n_i-n_{i-1}) & \leq \sum_{k=0}^{\infty} k \bigl(\sigma^2(\beta)R_N\bigr)^k  \e^{- c \frac{k}{ \log N} \log^+(\frac{k}{\log N})} \\
    & \leq (\log N)^2 \times \frac{1}{\log N}  \sum_{k=0}^{\infty} \frac{k}{\log N}\,  \e^{\th\frac{k}{ \log N}  -c\frac{k}{ \log N} \log^+(\frac{k}{\log N})} \,,
  \end{split}
  \]
  where we have also used that \(\sigma^2(\beta) R_N \leq \e^{\th/\log N}\).
  The last term converges to \(\int_0^{\infty} t\, \e^{ \th t - c t \log_+(t) } \dd t\) by a Riemann approximation, so in particular it is bounded by some constant (that depends on~\(\th\)).
  This concludes the proof.
\end{proof}

\end{appendix}

\bibliographystyle{alpha}
\bibliography{biblio.bib}

\end{document}